\date{}
\newtheorem{theo}{Theorem}[section]
\newtheorem{lemm}{Lemma}[section]
\newtheorem{coro}{Corollary}[section]
\newtheorem{prop}{Proposition}[section]
\theoremstyle{remark}
\newtheorem{rema}{Remark}[section]
\theoremstyle{definition}
\newtheorem{defi}{Definition}[section]
\author {Ivan Dynnikov}
\address{\noindent V.A.Steklov Mathematical Institute of Russian Academy of Science, 8 Gubkina Str., Moscow 119991, Russia}
\address{\noindent St.\ Petersburg State University, Line 14th (Vasilyevsky Island), 29, Saint Petersburg, 199178, Russia}
\thanks{This work is supported by the Russian Science Foundation under grant~19-11-00151.}
\email{dynnikov@mech.math.msu.su}
\title{Transverse-Legendrian links}
\begin{document}

\begin{abstract}
In recent joint works of the present author with M.\,Prasolov and V.\,Shastin a new
technique for distinguishing Legendrian knots has been developed. In this paper
the technique is extended further to provide a tool for distinguishing transverse knots.
It is shown that the equivalence problem for transverse knots with
trivial orientation-preserving symmetry group
is algorithmically solvable. In a future paper
the triviality condition for the orientation-preserving symmetry group will be dropped.
\end{abstract}

\maketitle

\section{Introduction}
Rectangular (or grid) diagrams of links provide a convenient combinatorial framework for
studying Legendrian and transverse links. Namely, there are the following naturally defined bijections,
each respecting the topological type of the link:
$$\begin{aligned}
\mathscr R/\bigl\langle\overrightarrow{\mathrm I},\overleftarrow{\mathrm I}\bigr\rangle&\cong
\bigl\{\xi_+\text{-Legendrian link types}\big\},\\
\mathscr R/\bigl\langle\overrightarrow{\mathrm{II}},\overleftarrow{\mathrm{II}}\bigr\rangle&\cong
\bigl\{\xi_-\text{-Legendrian link types}\big\},\\
\mathscr R/\bigl\langle\overrightarrow{\mathrm I},\overleftarrow{\mathrm I},\overleftarrow{\mathrm{II}}\bigr\rangle\cong
\mathscr R/\bigl\langle\overrightarrow{\mathrm I},\overleftarrow{\mathrm I},\overrightarrow{\mathrm{II}}\bigr\rangle&\cong
\bigl\{\xi_+\text{-transverse link types}\big\},\\
\mathscr R/\bigl\langle\overrightarrow{\mathrm I},\overleftarrow{\mathrm{II}},\overrightarrow{\mathrm{II}}\bigr\rangle\cong
\mathscr R/\bigl\langle\overleftarrow{\mathrm I},\overleftarrow{\mathrm{II}},\overrightarrow{\mathrm{II}}\bigr\rangle&\cong
\bigl\{\xi_-\text{-transverse link types}\big\},\\
\end{aligned}$$
where~$\mathscr R/\langle T_1,\ldots,T_k\rangle$ means `oriented rectangular diagrams viewed up
to exchange moves and (de)sta\-bi\-li\-za\-tions of oriented types~$T_1,\ldots,T_k$' (we use
the notation of~\cite{bypasses} for the oriented types of stabilizations and destabilizations; see also Definition~\ref{moves-def} and Figure~\ref{stab-fig} below),
$\xi_+$ is the standard contact structure of~$\mathbb S^3$, and~$\xi_-$ is the mirror image of~$\xi_+$. A proof of these facts can be found in~\cite{OST}.

With the notation above at hand, the elements of the sets
$$\mathscr R/\bigl\langle\overrightarrow{\mathrm I},\overrightarrow{\mathrm{II}}\bigr\rangle\cong
\mathscr R/\bigl\langle\overleftarrow{\mathrm I},\overleftarrow{\mathrm{II}}\bigr\rangle\cong
\mathscr R/\bigl\langle\overrightarrow{\mathrm I},\overleftarrow{\mathrm{II}}\bigr\rangle\cong
\mathscr R/\bigl\langle\overleftarrow{\mathrm I},\overrightarrow{\mathrm{II}}\bigr\rangle$$
are naturally interpreted
as braids viewed up to conjugacy and Birman--Menasco exchange moves defined in~\cite{bm4}
(these entities are called Birman--Menasco classes in~\cite{bypasses}),
and elements of
$$\mathscr R/\bigl\langle\overrightarrow{\mathrm I},\overleftarrow{\mathrm I},\overrightarrow{\mathrm{II}},
\overleftarrow{\mathrm{II}}\bigr\rangle$$
as topological types of oriented links in~$\mathbb S^3$, see~\cite{cromwell,simplification}.

The elements of~$\mathscr R/\langle\varnothing\rangle$ are so called exchange classes, which mean rectangular diagrams viewed up
to exchange moves. The number of possible combinatorial types of diagrams in each exchange class
is finite, so the equivalence problem for exchange classes is trivially decidable. This fact and the results of~\cite{distinguishing}
are used in~\cite{dyn-shast}
to solve the equivalence problem for Legendrian knots of topological types having trivial orientation-preserving symmetry group.
It is noted in~\cite{dyn-shast} that the equivalence problem for transverse knots of the same topological types
can be solved in a similar manner, once we are able to solve the equivalence problem
for the elements of~$\mathscr R/\bigl\langle\overrightarrow{\mathrm I}\bigr\rangle$,
$\mathscr R/\bigl\langle\overleftarrow{\mathrm I}\bigr\rangle$, $\mathscr R/\bigl\langle\overrightarrow{\mathrm{II}}\bigr\rangle$,
and $\mathscr R/\bigl\langle\overleftarrow{\mathrm{II}}\bigr\rangle$
(see~\cite[Remark~7.1]{dyn-shast}).

In this note we give a topological interpretation to the elements of these sets
and solve the equivalence problem for them, thus extending the method of~\cite{distinguishing,dyn-shast} to transverse knots.
\section{Notation}

We denote by~$\mathbb T^2$ the two-dimensional torus~$\mathbb S^1\times\mathbb S^1$, and by~$\theta,\varphi$
the angular coordinates on~$\mathbb T^2$, which run through~$\mathbb R/(2\pi\mathbb Z)$. Denote
by~$p_\theta$ and~$p_\varphi$ the projection maps from~$\mathbb T^2$ to the first and the second~$\mathbb S^1$-factors, respectively.

We regard the three-sphere~$\mathbb S^3$ as the join~$\mathbb S^1*\mathbb S^1$ of two circles,
and use the associated coordinate system $\theta,\varphi,\tau$:
$$\mathbb S^3=\mathbb S^1\times\mathbb S^1\times[0;1]/\bigl((\theta',\varphi,0)\sim(\theta'',\varphi,0),
(\theta,\varphi',1)\sim(\theta,\varphi'',1)\ \forall\theta,\theta',\theta'',\varphi,\varphi',\varphi''\in\mathbb S^1\bigr).$$
(Observe that~$\tau$ is set to~$1$ on the first copy of~$\mathbb S^1$, on which the angular coordinate is~$\theta$,
and to~$0$ on the second one, where the angular coordinate is~$\varphi$.)

The map~$p_{\theta,\varphi}:\mathbb S^3\setminus\bigl(\mathbb S^1_{\tau=1}\cup\mathbb S^1_{\tau=0}\bigr)\rightarrow\mathbb T^2$
defined by~$p_{\theta,\varphi}(\theta,\varphi,\tau)=(\theta,\varphi)$ is referred to as the \emph{torus projection}.

For two distinct points~$x_1,x_2\in\mathbb S^1$ we denote by~$[x_1;x_2]$ (respectively, $(x_1;x_2)$) the closed (respectively, open) interval in
$\mathbb S^1$ starting at~$x_1$ and ending at~$x_2$.

\section{Rectangular diagrams of links}

\begin{defi}
By \emph{an oriented rectangular diagram of a link} we mean a non-empty finite subset~$R\subset\mathbb T^2$
with a decomposition~$R=R^+\sqcup R^-$ into disjoint union of two subsets~$R^+$ and~$R^-$ such
that we have~$p_\theta(R^+)=p_\theta(R^-)$, $p_\varphi(R^+)=p_\varphi(R^-)$, and each of~$p_\theta$, $p_\varphi$
restricted to each of~$R^+$, $R^-$ is injective.

The elements of~$R$ (respectively, of~$R^+$ or $R^-$)
are called \emph{vertices} (respectively, \emph{positive vertices} or \emph{negative vertices}) of~$R$.

Pairs~$(u,v)$ of vertices of~$R$ such that~$p_\theta(u)=p_\theta(v)$ (respectively, $p_\varphi(u)=p_\varphi(v)$)
are called \emph{vertical} (respectively, \emph{horizontal}) \emph{edges} of~$R$.
\end{defi}

With every oriented rectangular diagram~$R$ of a link we define \emph{the associated oriented link}~$\widehat R\subset\mathbb S^3$
as the closure of the preimage~$p_{\theta,\varphi}^{-1}(R)$ oriented so that~$\tau$ increases on
the oriented arcs constituting~$p_{\theta,\varphi}^{-1}(R^+)$ and decreases on
the oriented arcs constituting~$p_{\theta,\varphi}^{-1}(R^-)$.

A planar diagram of a link topologically equivalent to~$\widehat R$ can be obtained as follows. Cut the torus~$\mathbb T^2$
along a longitude and a meridian not passing through a vertex of~$R$ to obtain a square. Connect
the vertices in every edge by a vertical or horizontal straight line segment and make all verticals overpasses
at all crossings. Orient the obtained diagram so that each vertical edge is directed
from a positive vertex to a negative one, and each horizontal edge from a negative to a positive one.
For an example see Figure~\ref{rd-fig}, where positive vertices are black and negative ones are white.
\begin{figure}[ht]
\begin{tabular}{ccc}
\includegraphics[width=150pt]{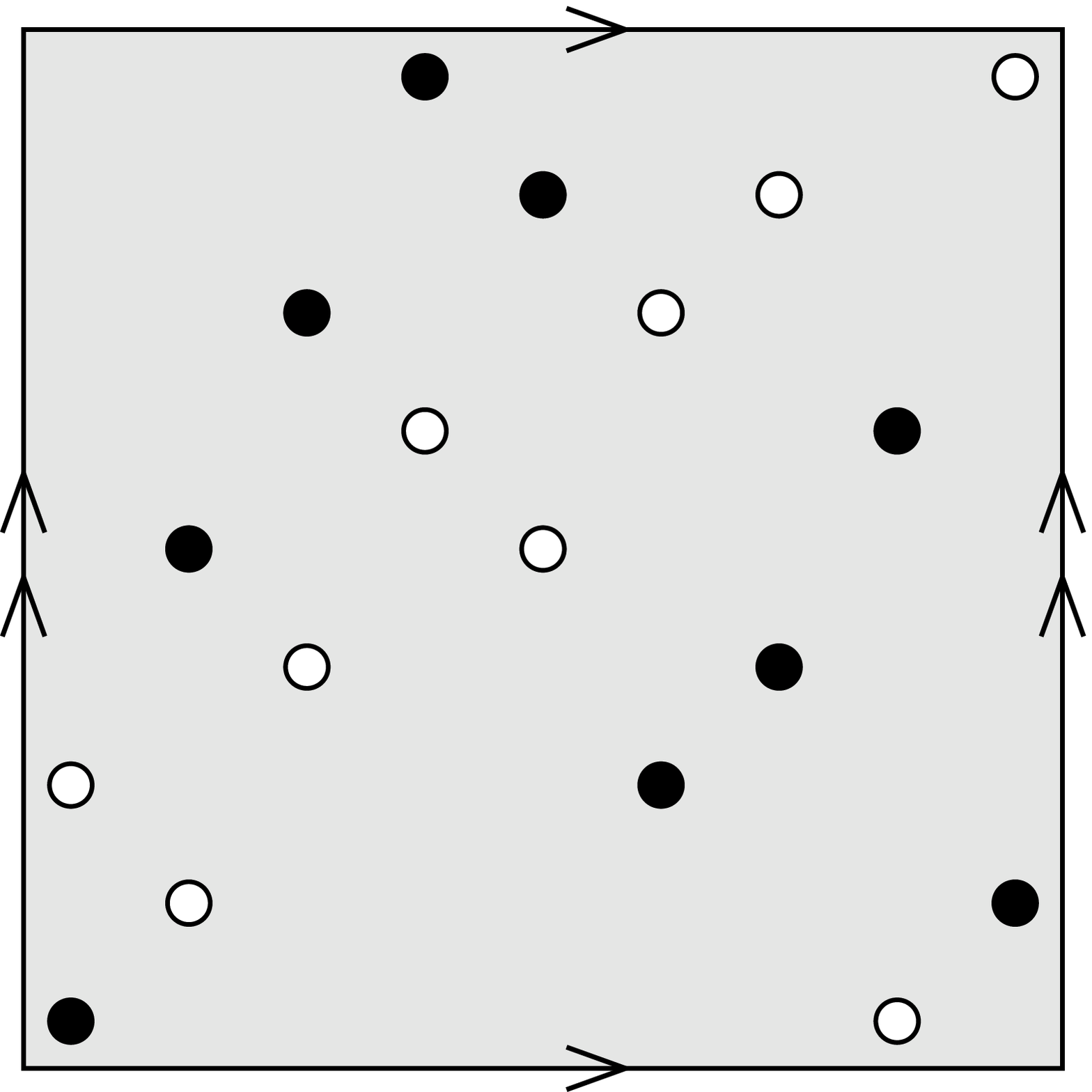}\put(-80,20){$\mathbb T^2$}&\hbox to 2cm{\hss}&\includegraphics[width=150pt]{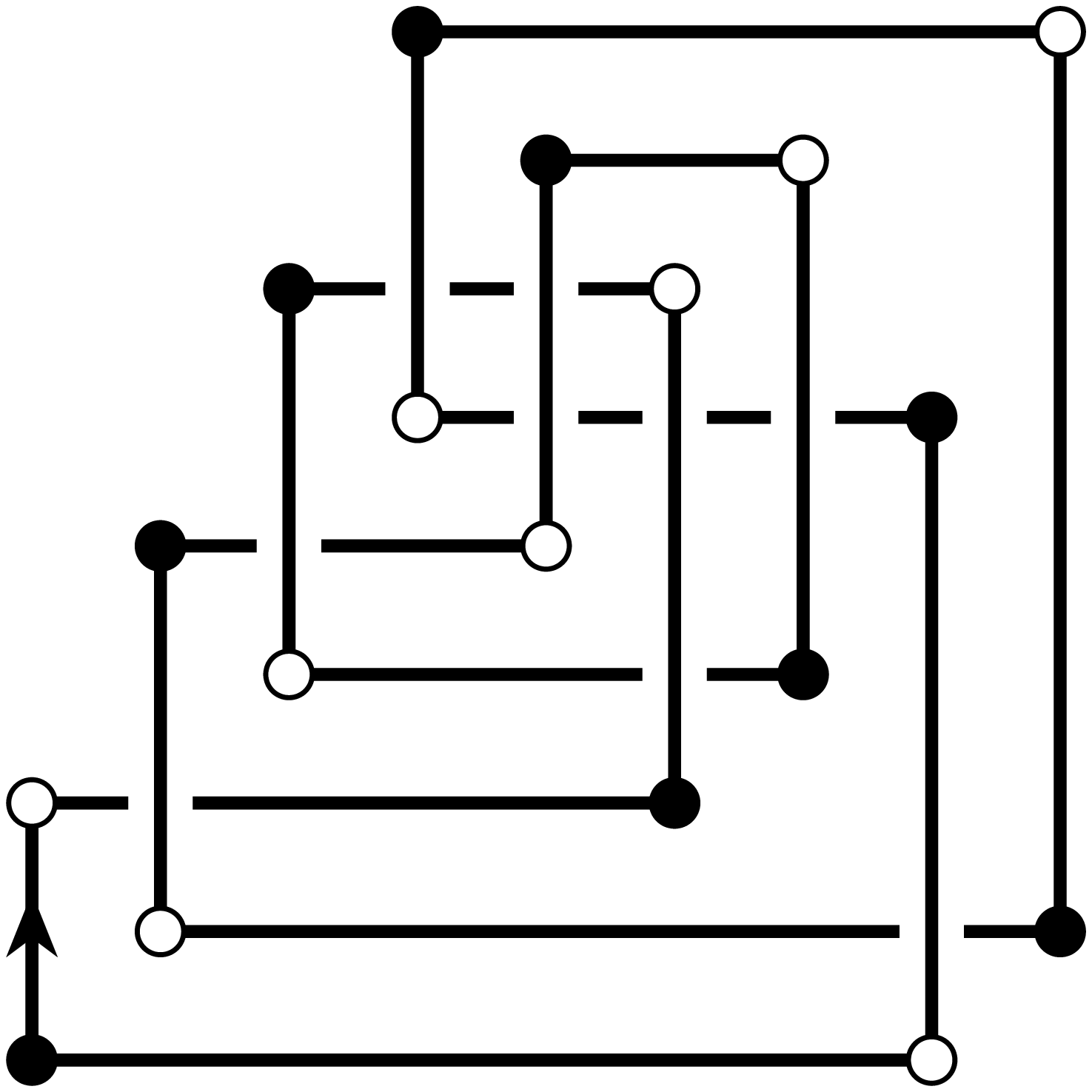}\\
$R$&&a knot equivalent to~$\widehat R$
\end{tabular}
\caption{A rectangular diagram of a link and a planar diagram of the corresponding link}\label{rd-fig}
\end{figure}

In this paper, all links and their diagrams are assumed to be oriented, so we omit `oriented' in the sequel.

\begin{defi}\label{moves-def}
Let~$R_1$ and~$R_2$ be rectangular diagrams of a link such that,
for some~$\theta_1,\theta_2,\varphi_1,\varphi_2\in\mathbb S^1$, the following holds:
\begin{enumerate}
\item
$\theta_1\ne\theta_2$, $\varphi_1\ne\varphi_2$;
\item
the symmetric difference~$R_1\triangle R_2$ is~$\{\theta_1,\theta_2\}\times\{\varphi_1,\varphi_2\}$;
\item
the intersection of the rectangle~$[\theta_1;\theta_2]\times[\varphi_1;\varphi_2]$
with~$R_1\cup R_2$ coincides with~$R_1\triangle R_2$;
\item
one, two, or three consecutive corners of the rectangle~$[\theta_1;\theta_2]\times[\varphi_1;\varphi_2]$
belong to~$R_1$, and the other(s) to~$R_2$;
\item
the orientations of~$R_1$ and~$R_2$ agree on~$R_1\cap R_2$, which
means~$R_1^+\cap R_2=R_1\cap R_2^+$ (equivalently, $R_1^-\cap R_2=R_1\cap R_2^-$).
\end{enumerate}
Then we say that the passage~$R_1\mapsto R_2$ is \emph{an elementary move}.

An elementary move~$R_1\mapsto R_2$ is called:
\begin{itemize}
\item
\emph{an exchange move} if~$|R_1|=|R_2|$,
\item
\emph{a stabilization move} if~$|R_2|=|R_1|+2$, and
\item
\emph{a destabilization move} if~$|R_2|=|R_1|-2$,
\end{itemize}
where~$|R|$ denotes the number of vertices of~$R$.
\end{defi}

We distinguish two \emph{types} and four \emph{oriented types} of stabilizations and destabilizations as follows.

\begin{defi}
Let~$R_1\mapsto R_2$ be a stabilization, and let~$\theta_1,\theta_2,\varphi_1,\varphi_2$ be as in Definition~\ref{moves-def}.
Denote by~$v$ an element of~$R_1\cap([\theta_1;\theta_2]\times[\varphi_1;\varphi_2])$, which is unique.
We say that the stabilization~$R_1\mapsto R_2$ and the destabilization~$R_2\mapsto R_1$
are of \emph{type~\rm I} (respectively, of \emph{type~\rm II}) if
$v\in\{(\theta_1,\varphi_1),(\theta_2,\varphi_2)\}$
(respectively, $v\in\{(\theta_1,\varphi_2),(\theta_2,\varphi_1)\}$).

Let~$\varphi_0\in\{\varphi_1,\varphi_2\}$ be such that~$\{\theta_1,\theta_2\}\times\{\varphi_0\}\subset R_2$.
The stabilization~$R_1\mapsto R_2$ and the destabilization~$R_2\mapsto R_1$
are of \emph{oriented type~$\overrightarrow{\mathrm I}$}
(respectively, of \emph{oriented type~$\overrightarrow{\mathrm{II}}$}) if they are of type~I (respectively, of type~II),
and~$(\theta_2,\varphi_0)$ is a positive vertex of~$R_2$.
The stabilization~$R_1\mapsto R_2$ and the destabilization~$R_2\mapsto R_1$
are of \emph{oriented type~$\overleftarrow{\mathrm I}$}
(respectively, of \emph{oriented type~$\overleftarrow{\mathrm{II}}$}) if they are of type~I (respectively, of type~II)
and~$(\theta_2,\varphi_0)$ is a negative vertex of~$R_2$.
\end{defi}

Elementary moves are illustrated in Figures~\ref{exch-fig} and~\ref{stab-fig}, where the shaded rectangle is supposed
to contain no vertices of the diagrams except the indicated ones.
\begin{figure}[ht]
\begin{tabular}{ccccccc}
\includegraphics[height=60pt]{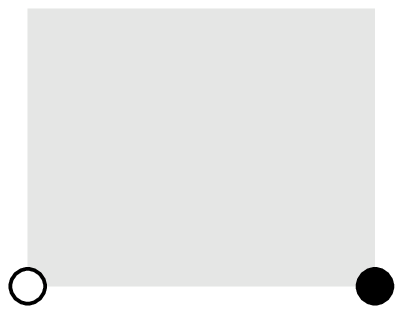}&\raisebox{27pt}{$\leftrightarrow$}&
\includegraphics[height=60pt]{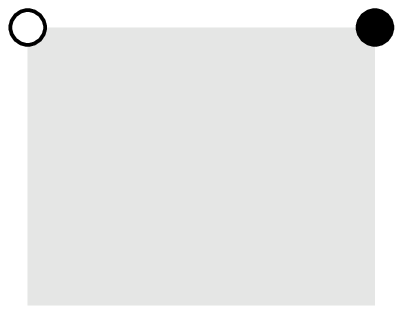}&\hbox to 40 pt{\hss}&
\includegraphics[height=60pt]{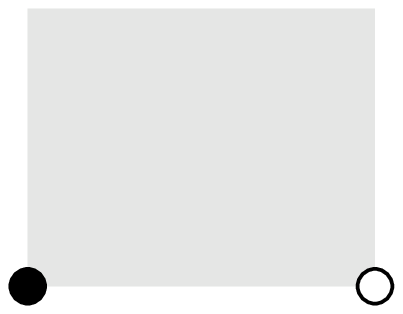}&\raisebox{27pt}{$\leftrightarrow$}&
\includegraphics[height=60pt]{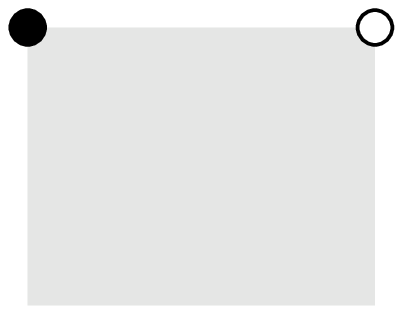}\\[20pt]
\includegraphics[height=60pt]{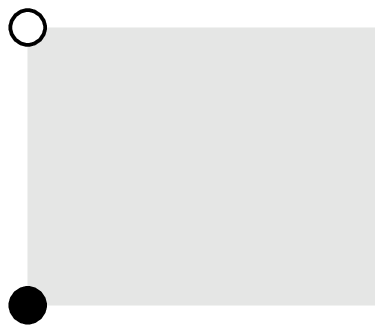}&\raisebox{27pt}{$\leftrightarrow$}&
\includegraphics[height=60pt]{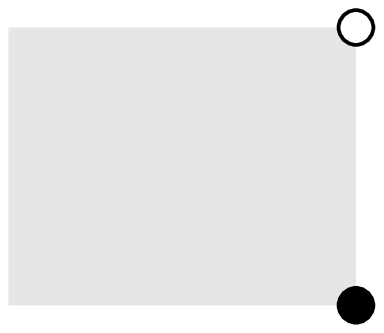}&\hbox to 40 pt{\hss}&
\includegraphics[height=60pt]{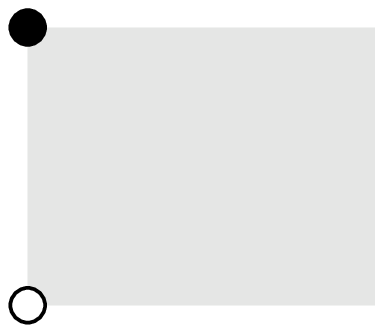}&\raisebox{27pt}{$\leftrightarrow$}&
\includegraphics[height=60pt]{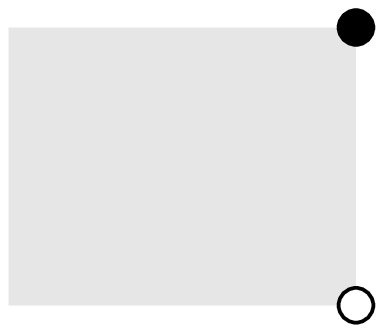}\\
\end{tabular}
\caption{Exchange moves}\label{exch-fig}
\end{figure}
\begin{figure}[ht]
\begin{tabular}{ccccccc}
\includegraphics[height=60pt]{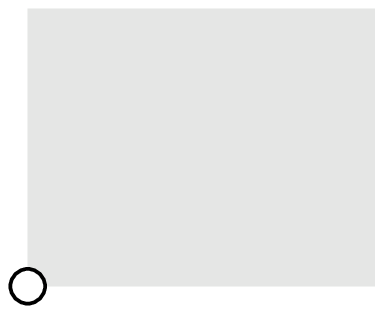}&\raisebox{27pt}{$\leftrightarrow$}&
\includegraphics[height=60pt]{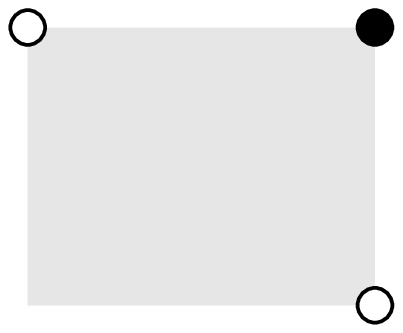}&\hbox to 40 pt{\hss}&
\includegraphics[height=60pt]{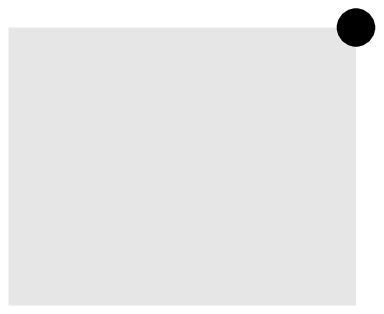}&\raisebox{27pt}{$\leftrightarrow$}&
\includegraphics[height=60pt]{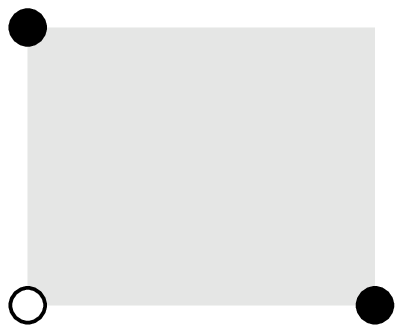}\\
&\hbox to 0 pt{\hss type $\overrightarrow{\mathrm I}$\hss}&
&&&\hbox to 0 pt{\hss type $\overrightarrow{\mathrm I}$\hss}\\[20pt]
\includegraphics[height=60pt]{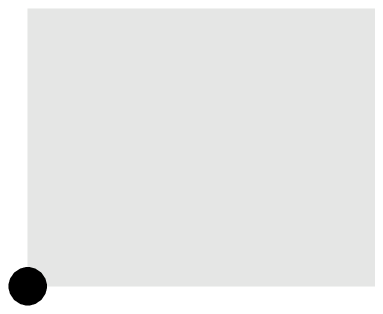}&\raisebox{27pt}{$\leftrightarrow$}&
\includegraphics[height=60pt]{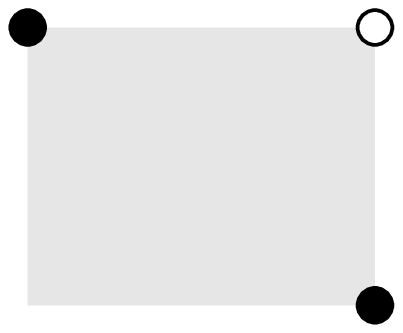}&\hbox to 40 pt{\hss}&
\includegraphics[height=60pt]{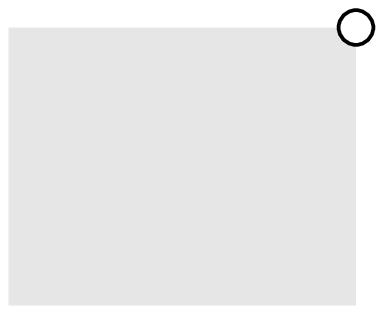}&\raisebox{27pt}{$\leftrightarrow$}&
\includegraphics[height=60pt]{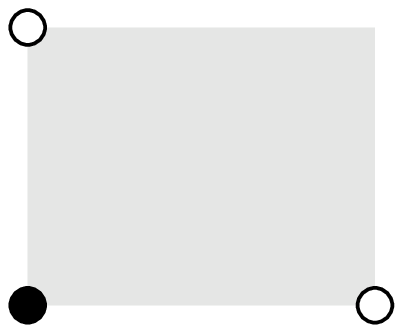}\\
&\hbox to 0 pt{\hss type $\overleftarrow{\mathrm I}$\hss}&
&&&\hbox to 0 pt{\hss type $\overleftarrow{\mathrm I}$\hss}\\[20pt]
\includegraphics[height=60pt]{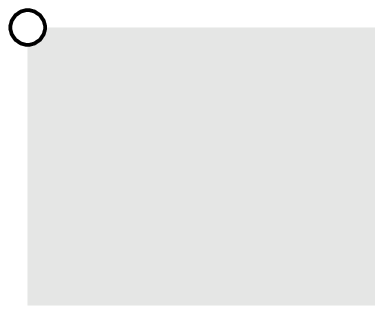}&\raisebox{27pt}{$\leftrightarrow$}&
\includegraphics[height=60pt]{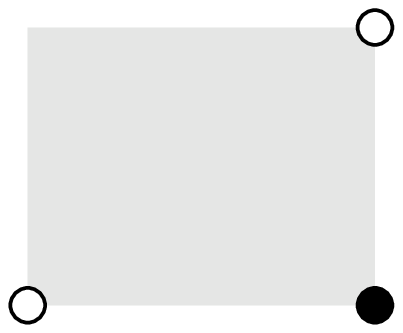}&\hbox to 40 pt{\hss}&
\includegraphics[height=60pt]{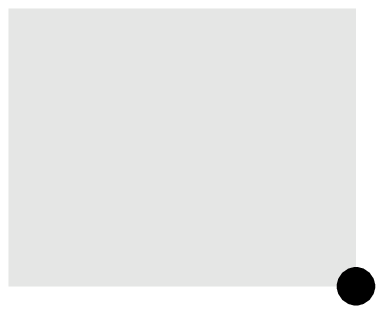}&\raisebox{27pt}{$\leftrightarrow$}&
\includegraphics[height=60pt]{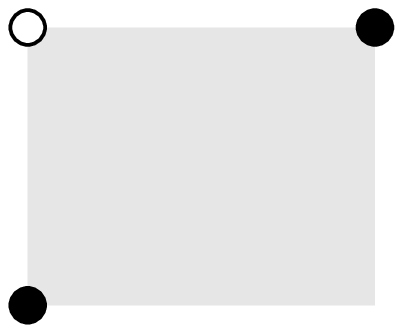}\\
&\hbox to 0 pt{\hss type $\overrightarrow{\mathrm{II}}$\hss}&
&&&\hbox to 0 pt{\hss type $\overrightarrow{\mathrm{II}}$\hss}\\[20pt]
\includegraphics[height=60pt]{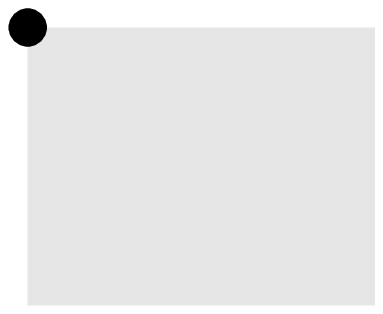}&\raisebox{27pt}{$\leftrightarrow$}&
\includegraphics[height=60pt]{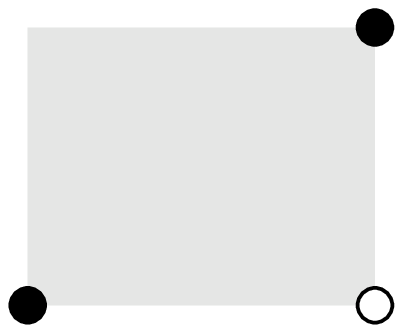}&\hbox to 40 pt{\hss}&
\includegraphics[height=60pt]{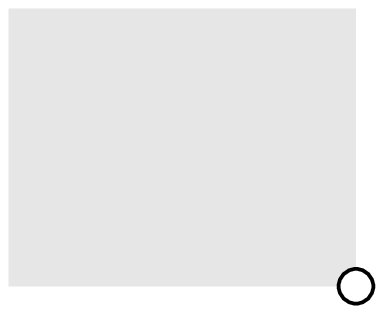}&\raisebox{27pt}{$\leftrightarrow$}&
\includegraphics[height=60pt]{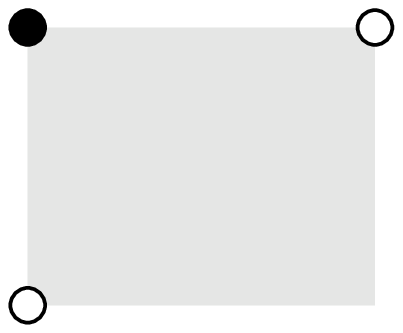}\\
&\hbox to 0 pt{\hss type $\overleftarrow{\mathrm{II}}$\hss}&
&&&\hbox to 0 pt{\hss type $\overleftarrow{\mathrm{II}}$\hss}
\end{tabular}
\caption{Stabilization and destabilization moves}\label{stab-fig}
\end{figure}

The set of all rectangular diagrams of links will be denoted by~$\mathscr R$. For any subset~$\{T_1,\ldots,T_k\}$
of~$\{\overrightarrow{\mathrm I},\overleftarrow{\mathrm I},\overrightarrow{\mathrm{II}},\overleftarrow{\mathrm{II}}\}$,
we denote by~$\langle T_1,\ldots,T_k\rangle$ the equivalence relation on~$\mathscr R$ generated
by all stabilizations and destabilizations of oriented types~$T_1,\ldots,T_k$ and exchange moves.
For a rectangular diagram of a link~$R\in\mathscr R$, we denote by~$[R]_{T_1,\ldots,T_k}$
the equivalence class of~$R$ in~$\mathscr R/\langle T_1,\ldots,T_k\rangle$.

The following statement is nearly a reformulation of~\cite[Proposition on page~42 + Theorem on page~45]{cromwell}
and~\cite[Proposition~4]{simplification} (the three versions use slightly different
settings and sets of moves, but their equivalence is easily seen).

\begin{theo}
The map
$$R\mapsto\text{the topological type of }\widehat R$$
establishes a one-to-one correspondence between $\mathscr R/\langle
\overrightarrow{\mathrm I},\overleftarrow{\mathrm I},\overrightarrow{\mathrm{II}},\overleftarrow{\mathrm{II}}\rangle$
and the set of all link types.
\end{theo}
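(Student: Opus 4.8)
The plan is to verify that the stated map is (i) well defined, (ii) surjective, and (iii) injective; of these, (i) and (ii) are routine, while (iii) carries all of the content. Concretely, for a bijection between the equivalence classes it suffices to check that equivalent diagrams have isotopic associated links (well-definedness), that every oriented link type is realized by some $\widehat R$ (surjectivity), and that isotopic associated links force the two diagrams into a single equivalence class (injectivity).

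For well-definedness I would argue locally. Each generating move alters~$R$ only inside a single rectangle~$[\theta_1;\theta_2]\times[\varphi_1;\varphi_2]$ that, by Definition~\ref{moves-def}, contains no other vertices; hence the corresponding change in~$p_{\theta,\varphi}^{-1}(R)$ is supported in the region of~$\mathbb S^3$ lying over that rectangle, and outside it the two links~$\widehat{R_1}$ and~$\widehat{R_2}$ literally coincide. It then remains to exhibit, for each configuration in Figures~\ref{exch-fig} and~\ref{stab-fig}, an ambient isotopy supported near that region carrying~$\widehat{R_1}$ to~$\widehat{R_2}$: an exchange move realizes a commutation of two meridional arcs that are unlinked over the rectangle, whereas a (de)stabilization inserts or deletes a single corner and is an elementary finger move. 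Because condition~(5) of Definition~\ref{moves-def} makes every move respect orientations, these isotopies preserve the oriented link type.

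For surjectivity I would begin from an arc (equivalently, grid) presentation of a given oriented link~$L$, which exists for every link: project~$L$ generically, replace the diagram by a rectilinear one whose arcs alternate between vertical and horizontal with all verticals crossing over all horizontals, and read off the resulting vertices on~$\mathbb T^2$. Assigning signs so that each vertical edge runs from a positive to a negative vertex reproduces~$L$, with its orientation, as~$\widehat R$.

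The hard part is injectivity, and here the strategy is to lean on the cited results rather than reprove a Reidemeister-type theorem from scratch. The underlying content is that any ambient isotopy between~$\widehat{R_1}$ and~$\widehat{R_2}$ decomposes into elementary steps, each realizable by exchange moves and (de)stabilizations of the four oriented types. The approach is to transport the classification of~\cite{cromwell} and~\cite{simplification} through the dictionary between the join/torus model used here and the arc- or grid-presentation models used there. The principal obstacle, and the only genuinely nontrivial step, is reconciling the three different move sets: one must check that every move appearing in those references is a composition of the moves admitted here, and conversely, so that the three generated equivalence relations coincide. Once this bookkeeping is complete, injectivity — and hence the bijection — follows at once from the cited classification.
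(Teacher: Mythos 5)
Your proposal is correct and follows essentially the same route as the paper: the paper offers no independent proof, stating only that the theorem is nearly a reformulation of the cited results of~\cite{cromwell} and~\cite{simplification} (whose ``slightly different settings and sets of moves'' are easily seen to be equivalent), which is precisely the reduction you make for the injectivity step. Your additional spelling-out of well-definedness and surjectivity is routine and consistent with what those references already contain.
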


\section{Decidability for the equivalence of transverse knots}

Here is the main technical result of the present paper:

\begin{theo}\label{main-tech-theo}
For any~$T\in\{\overrightarrow{\mathrm I},\overleftarrow{\mathrm I},\overrightarrow{\mathrm{II}},\overleftarrow{\mathrm{II}}\}$
there is an algorithm for deciding, given two rectangular diagrams of a link~$R_1$, $R_2$, whether or not~$[R_1]_T=[R_2]_T$.
\end{theo}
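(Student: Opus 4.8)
The plan is to reduce the equivalence question to a finite search, the only real difficulty being to control the complexity $|R|$ of the diagrams arising along a connecting sequence of moves. Exchange moves preserve $|R|$, a $T$-stabilization raises it by $2$, and a $T$-destabilization lowers it by $2$; so if one knew that $[R_1]_T=[R_2]_T$ can always be witnessed by a sequence of moves whose terms all have complexity at most $\max\{|R_1|,|R_2|\}$, the algorithm would be immediate. Indeed, there are only finitely many diagrams below any given complexity, and the introduction records that each exchange class is finite, so one forms the finite graph on these diagrams with an edge for every admissible move and tests whether $R_1$ and $R_2$ lie in one connected component.

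The substance of the proof is therefore such a complexity bound, which I would obtain from a monotonic simplification statement adapted to a single oriented type. Concretely, I would show that from any $R$ one can reach a diagram of minimal complexity inside $[R]_T$ using only exchange moves and $T$-destabilizations, never a $T$-stabilization. Granting this, the descent from $R_1$ terminates after finitely many destabilizations, and between consecutive destabilizations one explores only a finite exchange class; collecting the finitely many exchange classes of minimal diagrams so reached from $R_1$, doing the same for $R_2$, and testing whether the two finite collections meet then decides the question.

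The reason a single oriented type should behave better than the full four-element set is rigidity. I would track the corner created by a $T$-stabilization and examine, through the finite case analysis organized in Figures~\ref{exch-fig} and~\ref{stab-fig}, how it is carried by each exchange move and each further $T$-(de)stabilization. The goal is a local cancellation lemma: whenever a $T$-stabilization is eventually followed by moves that enable a $T$-destabilization, the stabilization can be commuted forward past the intervening exchange moves and cancelled, so that no connecting sequence is ever forced to raise the complexity above its endpoints. A confluence step, namely that distinct minimal outcomes of the descent are exchange equivalent, then upgrades this into the canonical form used above.

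The hard part is exactly this cancellation lemma, that is, excluding the possibility that one must first stabilize before a destabilization becomes available; this is the phenomenon that makes the Reidemeister and Markov move sets difficult, and it genuinely fails for the full four-type relation, where increasing the grid number is sometimes unavoidable. The single oriented type leaves an extra integer grading invariant, namely a fixed linear combination of the Thurston--Bennequin and rotation numbers in the type~I cases and the self-linking number in the type~II cases, and my expectation is that this added rigidity, pushed through either the commutation argument above or the rectangular-diagram-of-surfaces technique developed in the works cited in the introduction (which furnishes a computable bound on a surface witnessing the equivalence), forces every connecting sequence into the bounded regime, yielding the theorem.
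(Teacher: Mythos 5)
Your reduction to a finite graph search is sound in outline, but everything rests on the ``cancellation lemma'' that a $T$-stabilization followed eventually by a $T$-destabilization can always be commuted forward and cancelled, so that equivalent diagrams are joined by a sequence whose complexity never exceeds $\max\{|R_1|,|R_2|\}$. You do not prove this; you explicitly defer it (``my expectation is\dots''), and it is precisely where the whole difficulty of the theorem lives. The extra conserved quantity you invoke (self-linking number, or a combination of $\tb$ and the rotation number) only constrains the \emph{number} of stabilizations minus destabilizations in a connecting sequence; it gives no control over whether a stabilization can be pushed past the intervening exchange moves. And there is a concrete reason to doubt the commutation step: exchange moves require an empty rectangle (Figure~\ref{exch-fig}), and a stabilization can be exactly what breaks an edge to \emph{create} such an empty rectangle, so the exchange moves following a stabilization need not be defined on the unstabilized diagram. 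No confluence/Newman-type argument can start until this local statement is established, so as it stands the proposal is a program, not a proof.

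The paper's proof is structured precisely to avoid any complexity bound on diagrams. It associates to $R$ the oriented staircase multi-curve $\Gamma_{\overrightarrow{\mathrm{II}}}(R)\subset\mathbb T^2$ and the triple $\omega_{\overrightarrow{\mathrm{II}}}(R)=(k,l,m)$ (homology class and number of double points), which is invariant under all allowed moves (Lemma~\ref{omega-inv-lem}). The key point is that a type~$\overrightarrow{\mathrm{II}}$ stabilization does not change the curve's isotopy class relative to its double points, so at the level of the ``homology type'' of $R$ (the isotopy class of $\Gamma_{\overrightarrow{\mathrm{II}}}(R)$ rel double points, up to combinatorial equivalence) the complexity-raising moves act trivially. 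Lemmas~\ref{step-lem} and~\ref{r=r-lem} then show that equality of homology types already implies $[R]_{\overrightarrow{\mathrm{II}}}=[R']_{\overrightarrow{\mathrm{II}}}$, and a counting argument ($|Y|\leqslant 2m(k+l)$) shows there are only finitely many homology types for each value of $\omega_{\overrightarrow{\mathrm{II}}}$. The algorithm is connectivity in a finite graph whose \emph{vertices are homology types}, not diagrams of bounded size; finiteness is obtained by coarsening the objects rather than by bounding the sequences, which is why no analogue of your cancellation lemma is ever needed. If you want to salvage your approach, you would have to either prove that lemma (likely false, and certainly unsupported) or find a substitute for the paper's passage to an invariant on which stabilizations act trivially.
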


To prove Theorem~\ref{main-tech-theo} we need some preparations.

For a rectangular diagram of a link~$R$, denote by~$\Gamma_{\overrightarrow{\mathrm{II}}}(R)$
the following union of closed immersed staircase-like curves in~$\mathbb T^2$:
$$\Gamma_{\overrightarrow{\mathrm{II}}}(R)=\left(\bigcup_{(\theta_0,\varphi_1)\in R^+,\
(\theta_0,\varphi_2)\in R^-}\{\theta_0\}\times[\varphi_1;\varphi_2]\right)\cup
\left(\bigcup_{(\theta_1,\varphi_0)\in R^-,\
(\theta_2,\varphi_0)\in R^+}[\theta_1;\theta_2]\times\{\varphi_0\}\right)$$
oriented by demanding that~$\theta+\varphi$ locally increase on every straight line segment
in this union. These straight line segments will be referred to as \emph{the edges} of~$\Gamma_{\overrightarrow{\mathrm{II}}}(R)$.
Thus, the pair of endpoints of an edge of~$\Gamma_{\overrightarrow{\mathrm{II}}}(R)$ is an edge of~$R$, and vice versa.

An example is shown in Figure~\ref{gamma-fig}.
\begin{figure}[ht]
\begin{tabular}{ccc}
\includegraphics[width=150pt]{rd0.eps}\put(-80,20){$\mathbb T^2$}&\hbox to 2cm{\hss}&\includegraphics[width=150pt]{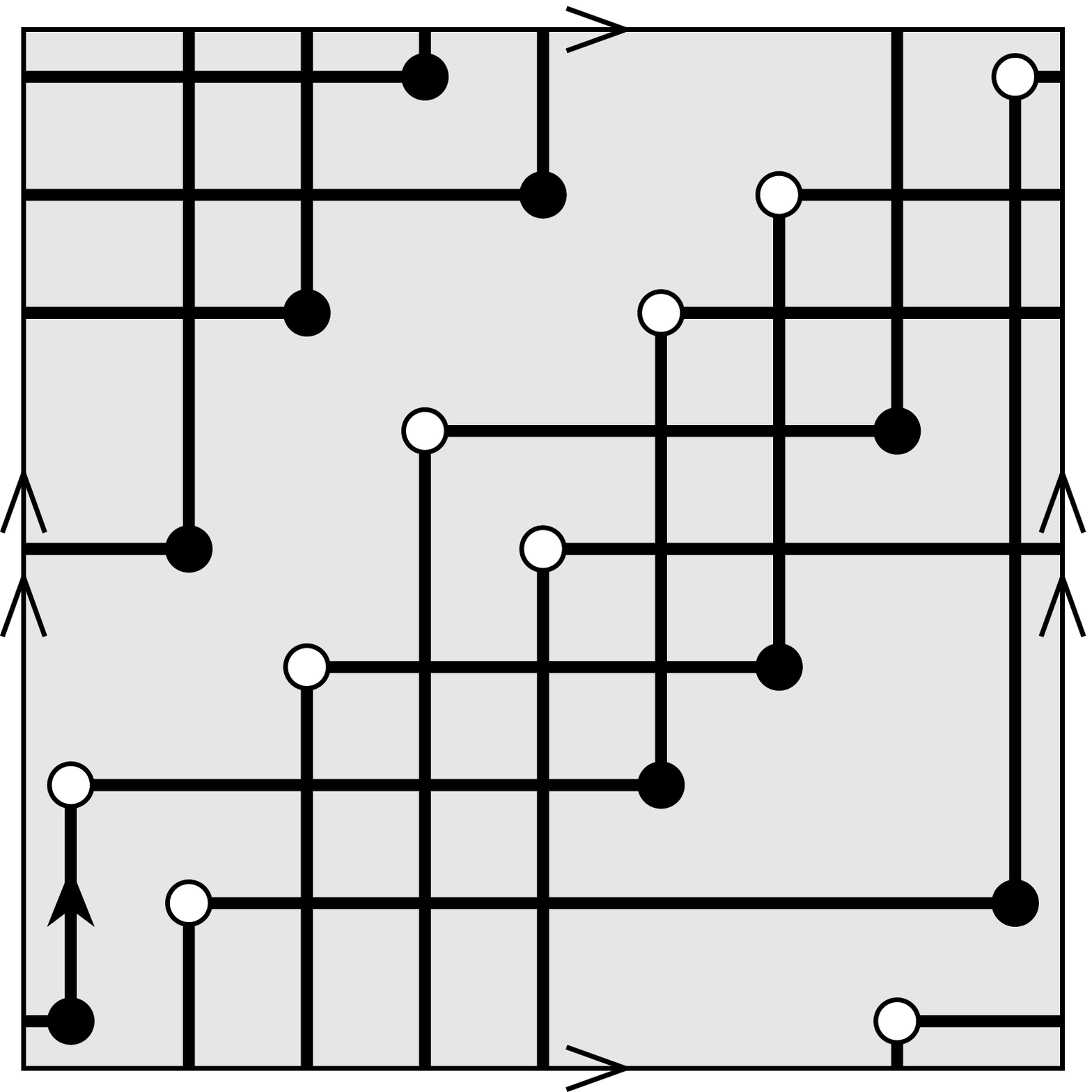}\\
$R$&&$\Gamma_{\overrightarrow{\mathrm{II}}}(R)$
\end{tabular}
\caption{A rectangular diagram of a link~$R$ and the curve~$\Gamma_{\protect\overrightarrow{\mathrm{II}}}(R)$}\label{gamma-fig}
\end{figure}
The union of curves~$\Gamma_{\overrightarrow{\mathrm{II}}}(R)$ can also be described as the torus projection
of the link~$\widehat R_\varepsilon$ obtained from~$\widehat R$ by replacing each arc in the domain~$\tau\in[0;\varepsilon]$
(respectively, $\tau\in[1-\varepsilon;1]$)
by an arc on which the coordinates~$\varphi$ and~$\tau$ are constant, and~$\theta$ is increasing
(respectively, $\theta$ and~$\tau$ are constant, and~$\varphi$ is increasing), where~$\varepsilon\in(0;1/2)$.

With every rectangular diagram of a link~$R$ we associate a triple of
numbers~$\omega_{\overrightarrow{\mathrm{II}}}(R)\in\mathbb N\times\mathbb N\times(\mathbb N\cup\{0\})$
as follows: $\omega_{\overrightarrow{\mathrm{II}}}(R)=(k,l,m)$, where~$m$ is the number of double points in~$\Gamma_{\overrightarrow{\mathrm{II}}}(R)$,
and~$(k,l)\in\mathbb Z^2=H_1(\mathbb T^2;\mathbb Z)$ is the homology class of~$\Gamma_{\overrightarrow{\mathrm{II}}}(R)$, that is,
$$k=\frac1{2\pi}\int\limits_{\Gamma_{\overrightarrow{\mathrm{II}}}(R)}d\theta,\qquad l=\frac1{2\pi}\int\limits_{\Gamma_{\overrightarrow{\mathrm{II}}}(R)}d\varphi.$$

\begin{lemm}\label{omega-inv-lem}
If~$[R]_{\overrightarrow{\mathrm{II}}}=[R']_{\overrightarrow{\mathrm{II}}}$, then~$\omega_{\overrightarrow{\mathrm{II}}}(R)=
\omega_{\overrightarrow{\mathrm{II}}}(R')$.
\end{lemm}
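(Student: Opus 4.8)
The plan is to show that $\omega_{\overrightarrow{\mathrm{II}}}$ is unchanged by each of the moves generating the relation $\langle\overrightarrow{\mathrm{II}}\rangle$. By definition this relation is generated by exchange moves together with stabilizations and destabilizations of oriented type $\overrightarrow{\mathrm{II}}$. Since a destabilization is the inverse of a stabilization, and the equivalence is the transitive closure of single moves, it suffices to prove $\omega_{\overrightarrow{\mathrm{II}}}(R_1)=\omega_{\overrightarrow{\mathrm{II}}}(R_2)$ whenever $R_1\mapsto R_2$ is either an exchange move or a stabilization of oriented type $\overrightarrow{\mathrm{II}}$.

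The key observation, which I would use uniformly in both cases, is that $R_1$ and $R_2$ differ only inside the rectangle $\mathcal K=[\theta_1;\theta_2]\times[\varphi_1;\varphi_2]$ of Definition~\ref{moves-def}, whose intersection with $R_1\cup R_2$ consists only of corners. Hence $\Gamma_{\overrightarrow{\mathrm{II}}}(R_1)$ and $\Gamma_{\overrightarrow{\mathrm{II}}}(R_2)$ coincide outside $\mathcal K$, and any edge of either curve meeting the \emph{open} rectangle has both endpoints outside $\mathcal K$ and therefore crosses it straight through: it is either a \emph{through-vertical}, spanning the full height at some $\theta\in(\theta_1;\theta_2)$, or a \emph{through-horizontal}, spanning the full width at some $\varphi\in(\varphi_1;\varphi_2)$. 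Write $n_V,n_H$ for their numbers; these strands, and all $n_Vn_H$ double points among them, are common to both curves. Tracing the orientation conventions, one then checks that inside $\mathcal K$ the portion of $\Gamma_{\overrightarrow{\mathrm{II}}}(R_1)$ is a pair of adjacent sides of $\partial\mathcal K$ forming an ``L'', while that of $\Gamma_{\overrightarrow{\mathrm{II}}}(R_2)$ is the complementary pair of sides; the corner where the two sides of each ``L'' meet is a vertex of the corresponding diagram, hence not a double point.

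Granting this local ``L''-for-complementary-``L'' description, both claims follow by inspection. Each ``L'' contains exactly one full horizontal side, of $\theta$-extent $\theta_2-\theta_1$, and one full vertical side, of $\varphi$-extent $\varphi_2-\varphi_1$, both traversed so that $\theta+\varphi$ increases; hence the two curves have the same $\int d\theta$ and the same $\int d\varphi$, so the homology class $(k,l)$ is preserved. For the double-point count, the horizontal side of each ``L'' meets exactly the $n_V$ through-verticals (at level $\varphi_1$ or $\varphi_2$) and no through-horizontal, while its vertical side meets exactly the $n_H$ through-horizontals (at $\theta_1$ or $\theta_2$) and no through-vertical; thus each ``L'' carries precisely $n_V+n_H$ double points with the common part of the curve, with none at the corner. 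Adding the common $n_Vn_H$ through–through crossings, the total number $m$ agrees for $R_1$ and $R_2$.

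The step requiring genuine care, and the reason the statement singles out $\overrightarrow{\mathrm{II}}$, is the verification that the replacement really is one ``L'' for the complementary one \emph{locally inside} $\mathcal K$. For an exchange move this is a direct check of the configurations of Figure~\ref{exch-fig}. For a stabilization it is the orientation of the new vertices that keeps the new edges inside $\mathcal K$: for oriented type $\overrightarrow{\mathrm{II}}$ the vertex $(\theta_2,\varphi_0)$ is positive, which forces the newly created horizontal edge to run along a side of $\partial\mathcal K$ rather than the long way around the torus, so the modification stays local and the description above holds (for the opposite orientation $\overleftarrow{\mathrm{II}}$ this edge would wind around $\mathbb T^2$, and neither $(k,l)$ nor $m$ need be preserved). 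Once locality is secured, the two cases of Figure~\ref{stab-fig} labelled $\overrightarrow{\mathrm{II}}$, and their symmetric variants, are all settled by the single count above.
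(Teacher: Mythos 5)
Your reduction to single moves and your treatment of stabilizations are essentially sound, but the central locality claim --- that $\Gamma_{\overrightarrow{\mathrm{II}}}(R_1)$ and $\Gamma_{\overrightarrow{\mathrm{II}}}(R_2)$ coincide outside $\mathcal K=[\theta_1;\theta_2]\times[\varphi_1;\varphi_2]$ and differ there by complementary ``L''s on $\partial\mathcal K$ --- is false for half of the exchange moves, and this is exactly where your proof breaks. The segment of $\Gamma_{\overrightarrow{\mathrm{II}}}$ joining the two vertices of an edge of the diagram is not an arbitrary arc: by the orientation convention it must run from the positive vertex in the direction of increasing $\varphi$ (for a vertical edge) and from the negative vertex in the direction of increasing $\theta$ (for a horizontal edge). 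Consider a vertical exchange move in which the edge $\{(\theta_1,\varphi_1),(\theta_1,\varphi_2)\}$ of $R_1$ is replaced by $\{(\theta_2,\varphi_1),(\theta_2,\varphi_2)\}$ in $R_2$, with $(\theta_1,\varphi_1)$ \emph{negative} and $(\theta_1,\varphi_2)$ \emph{positive}. Nothing in Definition~\ref{moves-def} excludes this sign pattern (it is one of the configurations of Figure~\ref{exch-fig}), and exchange moves of every sign pattern are allowed in $\langle\overrightarrow{\mathrm{II}}\rangle$. Then the vertical segment of $\Gamma_{\overrightarrow{\mathrm{II}}}(R_1)$ is $\{\theta_1\}\times[\varphi_2;\varphi_1]$, the \emph{complement} of the left side of $\mathcal K$ in the meridian: it winds around the torus, entirely outside $\mathcal K$; the same happens for $R_2$ at $\theta_2$. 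So the two curves do not agree outside $\mathcal K$, and the closure of their symmetric difference is the boundary of the rectangle $[\theta_1;\theta_2]\times[\varphi_2;\varphi_1]$, not of $\mathcal K$. The paper's proof is organized around precisely this point: it allows the rectangle $r$ bounding the symmetric difference to be any of $r_0=\mathcal K$, $[\theta_2;\theta_1]\times[\varphi_1;\varphi_2]$, or $[\theta_1;\theta_2]\times[\varphi_2;\varphi_1]$, and notes explicitly that $r$ need not be the rectangle of Definition~\ref{moves-def}.

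In these wrap-around cases the homology part of your argument survives (the symmetric difference is still a null-homologous rectangle boundary), but your double-point count does not. The changed vertical segment $\{\theta_1\}\times[\varphi_2;\varphi_1]$ meets not the $n_H$ through-horizontals of $\mathcal K$ but all the \emph{other} horizontal strands crossing the meridian --- in the paper's notation $l-l_0-1$ of them --- so the number of double points carried by the changed part is $k_0+l-l_0-1$ rather than your $n_V+n_H=k_0+l_0$. These two numbers differ in general; what saves the lemma is only that \emph{within each of the three cases} the count is the same for $\Gamma$ and $\Gamma'$, and this case analysis is what your proof is missing. Your closing diagnosis that orientation matters only for stabilizations (where type $\overrightarrow{\mathrm{II}}$ does keep the new edges inside $\mathcal K$, as you say) is correct as far as it goes, but exchange moves carry no orientation restriction, so the non-local configurations cannot be legislated away, and the uniform local count fails on them.
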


\begin{proof}
To simplify the notation we put~$\Gamma=\Gamma_{\overrightarrow{\mathrm{II}}}(R)$ and~$\Gamma'=\Gamma_{\overrightarrow{\mathrm{II}}}(R')$.
It suffices to consider the case when~$R\mapsto R'$ is an exchange move or a type~$\overrightarrow{\mathrm{II}}$ stabilization.
One can check that, for any of these moves,
the closure of the symmetric difference~$\Gamma\triangle\Gamma'$
is the boundary of a rectangle~$r$ (which is not necessarily the one mentioned in Definition~\ref{moves-def}),
with the bottom and right sides of~$r$ belonging to one of~$\Gamma$, $\Gamma'$, and the top and left sides
to the other. Moreover, if~$\Gamma$ and~$\Gamma'$ are viewed as $1$-chains, then~$\Gamma-\Gamma'=\partial r$
for some orientation of~$r$. The cases are sketched in Figure~\ref{allowed-fig}.
\begin{figure}[ht]
\begin{tabular}{ccccccc}
\includegraphics[height=50pt]{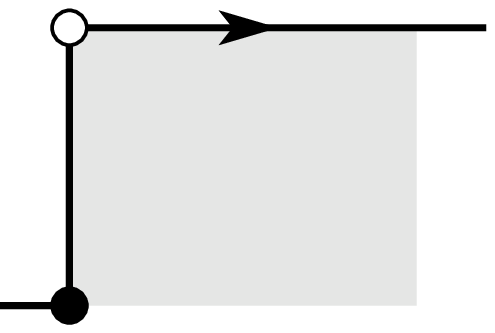}\put(-31,22){$r$}&\raisebox{23pt}{$\leftrightarrow$}&
\includegraphics[height=50pt]{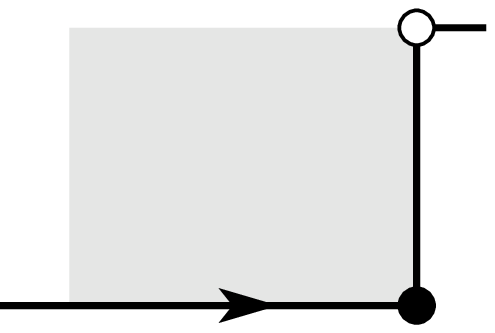}\put(-31,22){$r$}&\hbox to 40 pt{\hss}&
\includegraphics[height=50pt]{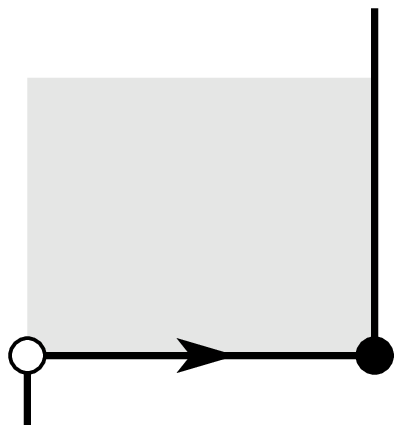}\put(-31,22){$r$}&\raisebox{23pt}{$\leftrightarrow$}&
\includegraphics[height=50pt]{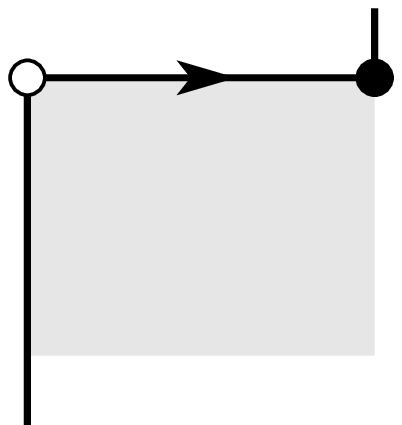}\put(-31,22){$r$}\\
\includegraphics[height=50pt]{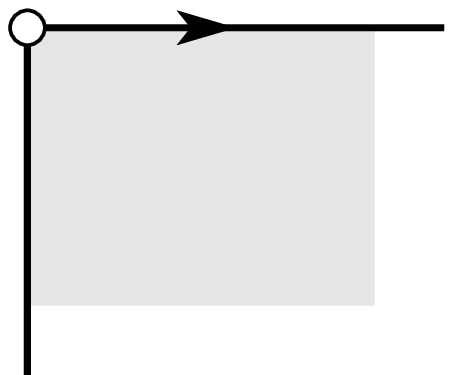}\put(-31,22){$r$}&\raisebox{23pt}{$\leftrightarrow$}&
\includegraphics[height=50pt]{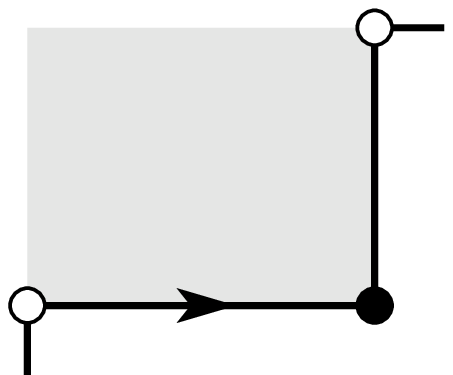}\put(-31,22){$r$}&\hbox to 40 pt{\hss}&
\includegraphics[height=50pt]{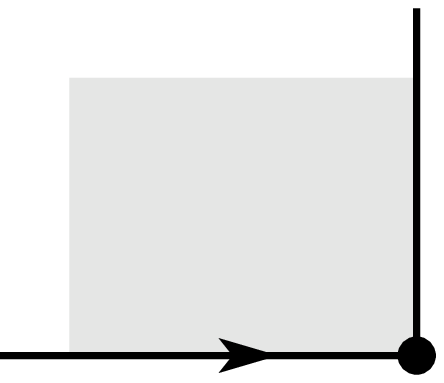}\put(-31,22){$r$}&\raisebox{23pt}{$\leftrightarrow$}&
\includegraphics[height=50pt]{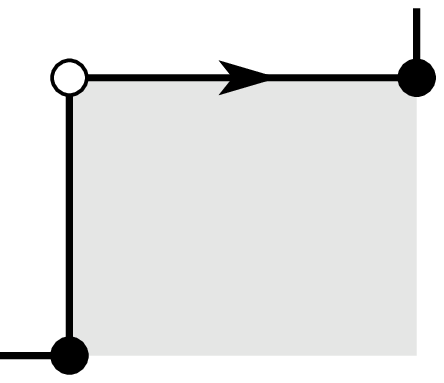}\put(-31,22){$r$}\\
\end{tabular}
\caption{The change of~$\Gamma_{\protect\overrightarrow{\mathrm{II}}}(R)$ under allowed elementary moves on~$R$}\label{allowed-fig}
\end{figure}
Thus, the homology class of~$\Gamma$ in~$H_1(\mathbb T^2)$
is the same as that of~$\Gamma'$. 
Let~$(k,l)\in H_1(\mathbb T^2;\mathbb Z)$ be this class.

Since both multi-valued functions~$\theta$ and~$\varphi$ are locally non-decreasing on every edge of~$\Gamma$ and~$\Gamma'$,
each meridian~$\{\theta\}\times\mathbb S^1$ (respectively, longitude~$\mathbb S^1\times\{\varphi\}$) not passing through a vertex of~$R$
intersects each of~$\Gamma$ and~$\Gamma'$ exactly~$k$ (respectively, $l$) times.

Let~$\theta_1,\theta_2,\varphi_1,\varphi_2$ be as in Definition~\ref{moves-def}. Denote the rectangle~$[\theta_1;\theta_2]\times[\varphi_1;\varphi_2]$
by~$r_0$. Three cases are possible:
\begin{itemize}
\item $r=r_0$,
\item $r=[\theta_2;\theta_1]\times[\varphi_1;\varphi_2]$, or
\item
$r=[\theta_1;\theta_2]\times[\varphi_2;\varphi_1]$.
\end{itemize}

By the assumption of Definition~\ref{moves-def}, the intersection of~$r_0$ with~$R$ is a subset of the set
of vertices of~$r_0$.
Therefore, any vertical edge of~$\Gamma$ that intersects~$(\theta_1;\theta_2)\times\{\varphi_1\}$
intersects also~$(\theta_1;\theta_2)\times\{\varphi_2\}$, and vice versa. Let~$k_0$ be the number
of such edges. These edges are the same in~$\Gamma'$.

Similarly, let~$l_0$ be the number of horizontal edges of~$\Gamma$ (equivalently, of~$\Gamma'$)
that intersect~$\{\theta_1\}\times(\varphi_1;\varphi_2)$ (equivalently, $\{\theta_2\}\times(\varphi_1;\varphi_2)$).

$\Gamma$ and~$\Gamma'$ have exactly the same set of double points outside~$\partial r$. From the arguments above
it follows that the number
of double points of~$\Gamma$ and~$\Gamma'$ at~$\partial r$ is also the same and is equal to
\begin{itemize}
\item
$k_0+l_0$ if~$r=r_0$,
\item
$k-k_0-1+l_0$ if~$r=[\theta_2;\theta_1]\times[\varphi_1;\varphi_2]$,
\item
$k_0+l-l_0-1$ if~$r=[\theta_1;\theta_2]\times[\varphi_2;\varphi_1]$.
\end{itemize}
The claim follows.
\end{proof}

\begin{lemm}\label{step-lem}
Let~$R$ and~$R'$ be rectangular diagrams of a link such that the closure of the symmetric
difference~$\Gamma_{\protect\overrightarrow{\mathrm{II}}}(R)\triangle\Gamma_{\protect\overrightarrow{\mathrm{II}}}(R')$
has the form of the boundary of an embedded disk or an annulus~$F\subset\mathbb T^2$ such that
the interior of~$F$ is disjoint from~$R\cup R'$, and~$\partial F$ is disjoint from the set of double points of~$\Gamma_{\protect\overrightarrow{\mathrm{II}}}(R)$
and~$\Gamma_{\protect\overrightarrow{\mathrm{II}}}(R')$.
Then~$[R]_{\overrightarrow{\mathrm{II}}}=[R']_{\overrightarrow{\mathrm{II}}}$.
\end{lemm}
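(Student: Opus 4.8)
The plan is to realize the change from $R$ to $R'$ by sweeping the current staircase curve across $F$ one elementary rectangle at a time, running the computation of Lemma~\ref{omega-inv-lem} in reverse. Write $\Gamma=\Gamma_{\overrightarrow{\mathrm{II}}}(R)$ and $\Gamma'=\Gamma_{\overrightarrow{\mathrm{II}}}(R')$. Since the edges of $\Gamma$ and $\Gamma'$ lie on the lines through the $\theta$- and $\varphi$-coordinates of the vertices of $R\cup R'$, the boundary $\partial F$ automatically lies on the grid $G$ cut out by these lines, so $F$ is a union of closed cells of $G$; and by hypothesis every vertex of $R\cup R'$ lies on $\partial F$ or outside $\overline F$, never in the interior of a cell of $F$. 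I induct on the number of cells of $F$. The base case is $F=\varnothing$, where $\Gamma=\Gamma'$ and hence $R=R'$, because a diagram is recovered from its staircase curve as the set of corners, the sign of each corner being read off from the turning direction.

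For the inductive step I must locate a corner of $F$ at which to apply a single move. Every edge of $\Gamma$ and $\Gamma'$ increases $\theta+\varphi$, so both are monotone staircases that alternately run in the $+\theta$ and $+\varphi$ directions, turning left (from $+\theta$ to $+\varphi$) at each positive vertex and right at each negative one. Traverse $\partial F$ keeping $F$ on the left, so that the part on $\Gamma$ runs along its own orientation and the part on $\Gamma'$ against it, as in the chain identity $\Gamma-\Gamma'=\partial F$ obtained exactly as in Lemma~\ref{omega-inv-lem}. Along a maximal subarc of $\partial F$ lying in $\Gamma$, a convex corner of $F$ is precisely a positive vertex $b$ of $R$, where $\Gamma$ turns from $+\theta$ to $+\varphi$ and $F$ occupies the upper-left quadrant locally; such a corner exists by the left-minus-right turning count for a rectilinear disk. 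I let $r\subseteq F$ be the cell of $G$ having $b$ as its lower-right corner. Because the interior of $F$ carries no vertex, pushing $\Gamma$ across $r$—replacing the bottom and right sides of $r$ by its top and left sides—yields $\Gamma_{\overrightarrow{\mathrm{II}}}(R_1)$ for a genuine diagram $R_1$, and $R\mapsto R_1$ is one of the elementary reconfigurations of Figure~\ref{allowed-fig}.

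The heart of the matter, and the step I expect to be the main obstacle, is to check that this move is always \emph{admissible}, i.e.\ an exchange or a (de)stabilization of oriented type $\overrightarrow{\mathrm{II}}$, and never of type~I or $\overleftarrow{\mathrm{II}}$. Two features force this. First, $b$ sits at the lower-right corner $(\theta_2,\varphi_1)$ of $r$, hence on the anti-diagonal; by the definition of the oriented types this makes any (de)stabilization one of type~II, and the positive vertex appearing at the upper-right corner $(\theta_2,\varphi_2)$ after the push pins the oriented type to $\overrightarrow{\mathrm{II}}$ rather than $\overleftarrow{\mathrm{II}}$. Second, the hypothesis that $\partial F$ avoids the double points of $\Gamma$ and $\Gamma'$ excludes the degenerate alternative in which the two sides of $r$ meeting at a convex corner belong to a single strand through a crossing; this is what guarantees that the two sides of $r$ on the current curve are the bottom and right, meeting at a positive vertex, rather than a configuration that would produce a type~I picture (where the monotone staircase would have to turn through $180^\circ$ at a main-diagonal corner). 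Matching every local picture at $b$ against the list in Figures~\ref{stab-fig} and~\ref{allowed-fig} is the bulk of the work; the rest is bookkeeping.

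To close the induction, note that after the push the closure of $\Gamma_{\overrightarrow{\mathrm{II}}}(R_1)\triangle\Gamma'$ is $\partial(F\setminus r)$; removing a boundary cell from the disk (resp.\ annulus) $F$ leaves a disjoint union of disks and annuli with strictly fewer cells, still having vertex-free interiors and boundaries disjoint from the double points. Applying the induction hypothesis to each component gives $[R_1]_{\overrightarrow{\mathrm{II}}}=[R']_{\overrightarrow{\mathrm{II}}}$, and since $R\mapsto R_1$ is admissible we conclude $[R]_{\overrightarrow{\mathrm{II}}}=[R_1]_{\overrightarrow{\mathrm{II}}}=[R']_{\overrightarrow{\mathrm{II}}}$. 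In the annulus case the sweep simply continues around the annulus; finiteness of the cell count ensures termination, and the essential topology causes no trouble because $[\Gamma]=[\Gamma']$ already holds, the difference $\Gamma-\Gamma'=\partial F$ being null-homologous.
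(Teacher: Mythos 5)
Your induction rests on the claim that pushing $\Gamma$ across a single cell $r$ of the grid $G$ ``yields $\Gamma_{\overrightarrow{\mathrm{II}}}(R_1)$ for a genuine diagram $R_1$'', the passage $R\mapsto R_1$ being one of the allowed elementary moves. This claim is false, and the failure is global, not local: the lines of $G$ are, by your own definition, the meridians and longitudes through \emph{all} vertices of $R\cup R'$, including vertices far away from $F$; but whether a union of staircase curves is of the form $\Gamma_{\overrightarrow{\mathrm{II}}}(R_1)$ is a global condition --- no meridian (longitude) may carry two vertical (horizontal) edges, since $p_\theta$ (resp.\ $p_\varphi$) must be injective on $R_1^+$ and on $R_1^-$. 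Concretely, take $\theta_1<\theta_2<\theta_3<\theta_4<\theta_5$ and $\varphi_1<\varphi_2<\varphi_3<\varphi_4<\varphi_5$ in cyclic order, and set $R^+=\{(\theta_4,\varphi_1),(\theta_1,\varphi_3),(\theta_3,\varphi_4),(\theta_5,\varphi_5)\}$, $R^-=\{(\theta_1,\varphi_1),(\theta_4,\varphi_3),(\theta_3,\varphi_5),(\theta_5,\varphi_4)\}$, ${R'}^+=\bigl(R^+\setminus\{(\theta_4,\varphi_1)\}\bigr)\cup\{(\theta_2,\varphi_1),(\theta_4,\varphi_2)\}$, ${R'}^-=R^-\cup\{(\theta_2,\varphi_2)\}$. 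All hypotheses of the lemma hold with $F=[\theta_2;\theta_4]\times[\varphi_1;\varphi_2]$ (each curve has exactly two double points, $(\theta_1,\varphi_4)$ and $(\theta_5,\varphi_3)$, far from $\partial F$), yet $F$ consists of \emph{two} cells of $G$, because the meridian $\theta=\theta_3$ passes through the vertices $(\theta_3,\varphi_4),(\theta_3,\varphi_5)$ shared by both diagrams, and that meridian already carries the edge $\{\theta_3\}\times[\varphi_4;\varphi_5]$ of both $\Gamma$ and $\Gamma'$. Your rule selects the convex corner $b=(\theta_4,\varphi_1)$ and the cell $[\theta_3;\theta_4]\times[\varphi_1;\varphi_2]$; pushing $\Gamma$ across it produces a curve with two vertical edges (hence two positive and two negative corners) on the meridian $\theta=\theta_3$, which is the staircase curve of \emph{no} rectangular diagram whatsoever. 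Pushing $\Gamma'$ across the other cell fails identically, and no preliminary perturbation can help, because the grid travels with the diagrams, so an interior grid line persists. Note that in this example the lemma itself is immediate: $F$ has four corners and $R\mapsto R'$ is a single type~$\overrightarrow{\mathrm{II}}$ stabilization; it is precisely your finer, cell-by-cell subdivision that destroys validity.

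Relatedly, what you flagged as ``the heart of the matter'' (excluding type~I and $\overleftarrow{\mathrm{II}}$ pictures) is actually the easy part: for curves on which $\theta+\varphi$ is increasing, a push at a convex corner can only ever match the type-II patterns, exactly as you argue. The genuine difficulty, which your write-up never confronts, is keeping the intermediate objects legitimate rectangular diagrams, and this is what dictates the structure of the paper's proof: one first normalizes $R,R'$ by exchange moves so that no meridian or longitude contains four points of $R\cup R'$, and then inducts on the number of \emph{corners} of $F$, cutting $F$ along an entire horizontal arc $\gamma\subset F$ running between the two sides of $F$, chosen so that one endpoint of $\gamma$ is a vertex of $R\cup R'$. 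The intermediate diagram $R''$, with $R\triangle R''$ equal to the corner set of one of the two halves of $F$, remains a valid diagram precisely because the old vertex at an endpoint of $\gamma$ is removed at the same moment the new vertex at that latitude is created, while genericity protects the meridian through the other endpoint. Any repair of your scheme would have to let the cutting lines be chosen at fresh coordinates adapted to the current diagram rather than at the fixed grid $G$ --- at which point you will have reconstructed the paper's induction, not a cell-by-cell sweep.
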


\begin{proof}
We again put~$\Gamma=\Gamma_{\overrightarrow{\mathrm{II}}}(R)$ and~$\Gamma'=\Gamma_{\overrightarrow{\mathrm{II}}}(R')$.
Suppose that~$F$ is a disc. It follows from the hypothesis of the lemma that:
\begin{enumerate}
\item
$F$ is co-bounded by two staircase arcs~$\alpha$ and~$\beta$ such that~$\alpha\subset\Gamma$
and~$\beta\subset\Gamma'$ (on which the functions~$\theta$ and~$\varphi$
are locally non-decreasing);
\item
the set of corners of~$F$ coincides with~$R\triangle R'$.
\end{enumerate}
See Figure~\ref{disc-d-fig}(a) for an illustration.
\begin{figure}[ht]
\begin{tabular}{ccc}
\includegraphics{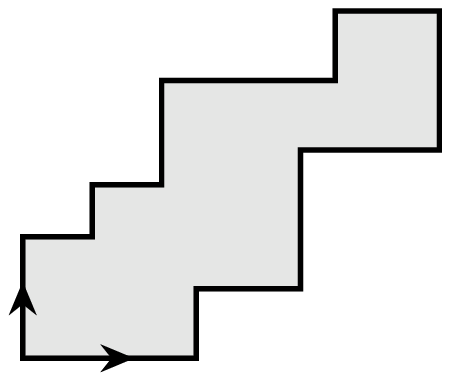}\put(-100,70){$\alpha$}\put(-47,40){$\beta$}\put(-72,55){$F$}&&
\includegraphics{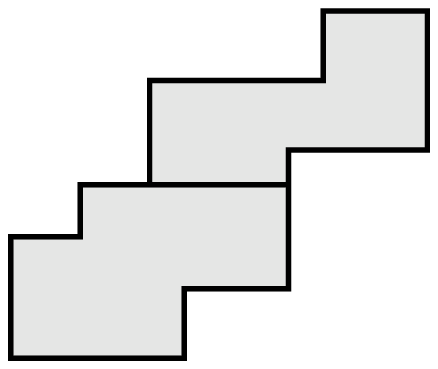}\put(-72,52){$\gamma$}\put(-100,30){$F_2$}\put(-50,78){$F_1$}\\
(a)&\hbox to 2cm{\hss}&(b)
\end{tabular}
\caption{Induction step when~$F$ is a disc}\label{disc-d-fig}
\end{figure}

The proof is by induction
in the number~$m$ of corners of the polygon~$F$.
The smallest possible number is~$m=4$. In this case, one easily finds
that~$R\mapsto R'$ is an exchange move or a type~$\overrightarrow{\mathrm{II}}$ stabilization or destabilization.

Suppose that~$m>4$ and the claim is proved in the case when~$F$ has fewer corners than~$m$.
Small perturbations of a rectangular diagram of a link are achievable by means exchange moves, so, without loss of generality, we may assume
that no meridian or longitude of the torus~$\mathbb T^2$ contains four points of~$R\cup R'$, for this can
be resolved by a small perturbation of~$R$ or~$R'$.

There is an arc~$\gamma$ of the form~$[\theta_1;\theta_2]\times\{\varphi_0\}$ such that:
\begin{enumerate}
\item
$\gamma\subset F$;
\item
$\gamma\cap\partial F=\partial\gamma$;
\item
one of the endpoints of~$\gamma$ belongs to~$R\cup R'$.
\end{enumerate}

Without loss of generality we may assume that~$(\theta_1,\varphi_0)\in\alpha$ and~$(\theta_2,\varphi_0)\in\beta$ as this is
the question of exchanging the roles of~$R$ and~$R'$. The arc~$\gamma$ cuts~$F$ into two discs, which we denote by~$F_1$ and~$F_2$.
We number them so that~$F_1$ is above~$\gamma$ and~$F_2$ is below~$\gamma$, see Figure~\ref{disc-d-fig}(b).

Let~$C_1$ (respectively, $C_2$) be the set of corners of~$F_1$ (respectively, $F_2$). One can see that there is
a rectangular diagram of a link~$R''$ such that~$R\triangle R''=C_1$ (which is equivalent to~$R'\triangle R''=C_2$)
whose orientation agrees with that of~$R$ on~$R\cap R''$. We then have
\begin{equation}\label{f1f2-eq}\overline{\Gamma\triangle\Gamma_{\protect\overrightarrow{\mathrm{II}}}(R'')}=\partial F_1,\quad
\overline{\Gamma'\triangle\Gamma_{\protect\overrightarrow{\mathrm{II}}}(R'')}=\partial F_2.
\end{equation}
Each of~$F_1$ and~$F_2$ has fewer corners than~$m$, hence, by the induction hypothesis, we have
$[R]_{\overrightarrow{\mathrm{II}}}=[R'']_{\overrightarrow{\mathrm{II}}}$ and~$[R']_{\overrightarrow{\mathrm{II}}}=[R'']_{\overrightarrow{\mathrm{II}}}$.
The induction step follows.

Now suppose that~$F$ is an annulus. Then it can be cut by two straight line segments, one horizontal and one vertical,
into two discs so that condition~\eqref{f1f2-eq} will hold (possibly after exchanging~$R$ and~$R'$),
which again will imply $[R]_{\overrightarrow{\mathrm{II}}}=[R'']_{\overrightarrow{\mathrm{II}}}$
and~$[R']_{\overrightarrow{\mathrm{II}}}=[R'']_{\overrightarrow{\mathrm{II}}}$
by the proven case of the lemma. The idea is illustrated in Figure~\ref{annf-fig}. We skip the easy details.
\begin{figure}[ht]
\centerline{\includegraphics{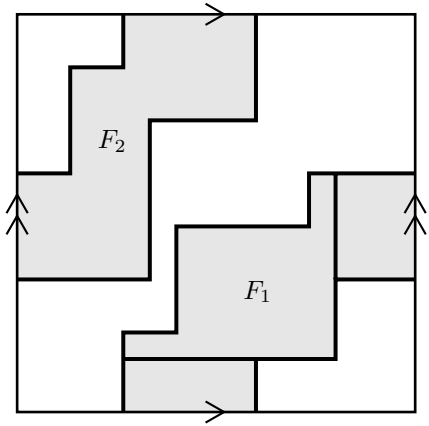}\put(-75,53){$F_1$}\put(-130,110){$F_2$}}
\caption{Cutting the annulus~$F$ into two discs~$F_1$ and~$F_2$}\label{annf-fig}
\end{figure}
\end{proof}

\begin{lemm}\label{r=r-lem}
Let~$R$ and~$R'$ be rectangular diagrams of a link such that:
\begin{enumerate}
\item
$\Gamma_{\overrightarrow{\mathrm{II}}}(R)$ and~$\Gamma_{\overrightarrow{\mathrm{II}}}(R')$
have the same set of double points, which we denote by~$X$;
\item
there is an isotopy from~$\Gamma_{\overrightarrow{\mathrm{II}}}(R)$ to~$\Gamma_{\overrightarrow{\mathrm{II}}}(R')$ in~$\mathbb T^2$
fixed on an open neighborhood of~$X$.
\end{enumerate}
Then~$[R]_{\overrightarrow{\mathrm{II}}}=[R']_{\overrightarrow{\mathrm{II}}}$.
\end{lemm}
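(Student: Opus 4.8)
The plan is to reduce the abstract isotopy hypothesis to the concrete combinatorial situation handled by Lemma~\ref{step-lem}. The key observation is that both curves $\Gamma=\Gamma_{\overrightarrow{\mathrm{II}}}(R)$ and $\Gamma'=\Gamma_{\overrightarrow{\mathrm{II}}}(R')$ pass through the same set of double points $X$, and the ambient isotopy is fixed near $X$. So I would first cut the torus open along $\Gamma$ (or along a regular neighborhood of the union $\Gamma\cup\Gamma'$) to decompose $\mathbb{T}^2$ into regions bounded by arcs of $\Gamma$ and $\Gamma'$. Since the isotopy is fixed on a neighborhood of $X$, the two curves agree near every double point, and the discrepancy between them is confined to a disjoint collection of ``bigon-like'' regions whose boundaries consist of one arc of $\Gamma$ and one arc of $\Gamma'$ running between consecutive common double points. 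Each such region is a subset of $\mathbb{T}^2$ with boundary $\alpha\cup\beta$, $\alpha\subset\Gamma$, $\beta\subset\Gamma'$, which is exactly the disc/annulus scenario of Lemma~\ref{step-lem}.

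First I would make precise the claim that the closure of each connected component of the complement of $X$ in the symmetric difference region is an embedded disc or annulus whose interior misses $R\cup R'$ and whose boundary misses $X$. Here one uses that the isotopy fixes a neighborhood of $X$, so inside each such region $\Gamma$ and $\Gamma'$ are isotopic rel endpoints with the endpoints lying in $X$; a standard innermost-disc argument turns the ambient isotopy between two embedded arcs sharing endpoints into a finite sequence of elementary modifications, each of which alters the curve across a single embedded disc or annulus. Then I would apply Lemma~\ref{step-lem} to each of these elementary modifications in turn, obtaining a chain $R=R_0, R_1, \ldots, R_n=R'$ of diagrams with $[R_{i}]_{\overrightarrow{\mathrm{II}}}=[R_{i+1}]_{\overrightarrow{\mathrm{II}}}$ for all $i$, whence $[R]_{\overrightarrow{\mathrm{II}}}=[R']_{\overrightarrow{\mathrm{II}}}$ by transitivity.

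The technical heart lies in verifying that the intermediate curves in this chain really are of the form $\Gamma_{\overrightarrow{\mathrm{II}}}(R_i)$ for genuine rectangular diagrams $R_i$, i.e.\ that the staircase (locally $\theta$- and $\varphi$-nondecreasing) structure and the positive/negative vertex bipartition are preserved at every step. Because $\Gamma$ and $\Gamma'$ are both staircase curves with the same homology class $(k,l)$ (guaranteed since they are related by moves that, by Lemma~\ref{omega-inv-lem}, preserve $\omega_{\overrightarrow{\mathrm{II}}}$) and the same double-point set, one can arrange each elementary disc/annulus modification to respect the monotonicity of $\theta+\varphi$, so that the corners of each modifying region constitute a valid symmetric difference $R_i\triangle R_{i+1}$ with compatible orientations, matching hypothesis of Lemma~\ref{step-lem}.

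The step I expect to be the main obstacle is the reduction of a general ambient isotopy, fixed only near $X$, to a finite sequence of the disc-and-annulus elementary modifications of Lemma~\ref{step-lem}. The difficulty is twofold: one must ensure that the complementary regions between $\Gamma$ and $\Gamma'$ are exactly discs or annuli (ruling out more complicated surfaces by an Euler-characteristic or innermost argument using that both curves are homologous and share $X$), and one must check that these regions can be ordered and peeled off one at a time without the intermediate curves developing new or spurious double points away from $X$. Handling the annular case carefully—where the region wraps around the torus and the two cutting segments of Lemma~\ref{step-lem} must be chosen to avoid $R\cup R'$ and $X$—is where I would concentrate the argument, since the disc case then follows as the simpler special situation.
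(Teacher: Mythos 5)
Your overall strategy---reduce to Lemma~\ref{step-lem} by locating regions co-bounded by an arc of~$\Gamma$ and an arc of~$\Gamma'$ and peeling them off one at a time---is the same as the paper's, but your opening claim is false and the fallback you sketch is missing the two ingredients that make the paper's induction actually run. The symmetric difference~$\Gamma\triangle\Gamma'$ is \emph{not} in general confined to a disjoint collection of bigon-like regions: the arc of~$\Gamma'$ corresponding to an arc of~$\Gamma$ can cross that arc transversally many times away from~$X$, can cross \emph{other} arcs and closed components of both curves, and the two staircase curves can moreover overlap along segments or meet in isolated points. Consequently a region between corresponding arcs need not have interior disjoint from~$R\cup R'$, and even an \emph{innermost} bigon of~$\Gamma\cup\Gamma'$ can fail the hypotheses of Lemma~\ref{step-lem}: its two boundary arcs may still touch the opposite curve, in which case the closure of the relevant symmetric difference is not an embedded circle. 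The paper gets out of exactly this deadlock with a mechanism absent from your plan: stray intersections (a single crossing point~$\gamma_i\cap\gamma_j'$ of two closed components, or a touching between the boundary of an innermost bigon and the other curve) are resolved by small perturbations of~$R$ or~$R'$, which are themselves realizable by exchange moves. In addition, the bigon hunt is carried out in the universal cover~$\mathbb{R}^2$ rather than on the torus, which is where the isotopy-rel-$X$ hypothesis genuinely produces a bigon and wrap-around phenomena are excluded.

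Second, ``peel off one at a time'' needs a well-founded termination measure, and you do not supply one; since the regions are not disjoint, replacing one arc (or performing a perturbation) changes the configuration of all the others, and the closed components of~$\Gamma\setminus X$ may have to be renumbered along the way. The paper's induction runs on the quantity
$$c(R,R')=(K+1)\Bigl(\sum_{\beta,\beta'}\chi(\beta\cap\beta')-N\Bigr)+\bigl|\{i:\gamma_i\ne\gamma_i'\}\bigr|,$$
whose coefficient~$K+1$ is chosen precisely so that each bigon replacement decreases the first summand by~$K+1$ while the renumbering can increase the second summand by at most~$K$. Finally, a logical slip: your appeal to Lemma~\ref{omega-inv-lem} to conclude that~$\Gamma$ and~$\Gamma'$ are homologous is circular, since that lemma presupposes~$[R]_{\overrightarrow{\mathrm{II}}}=[R']_{\overrightarrow{\mathrm{II}}}$, which is what you are trying to prove; the homology equality follows instead directly from the isotopy hypothesis of the lemma.
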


\begin{proof}
As before, we put~$\Gamma=\Gamma_{\overrightarrow{\mathrm{II}}}(R)$ and~$\Gamma'=\Gamma_{\overrightarrow{\mathrm{II}}}(R')$
and assume that no longitude or meridian contains four points of~$R\cup R'$.

The closure~$\alpha$
of a connected component of~$\Gamma\setminus X$ (respectively, $\Gamma'\setminus X$) will be called \emph{an arc of~$\Gamma$} (respectively, $\Gamma'$) if~$\alpha\cap X\ne\varnothing$.
There is a natural one-to-one correspondence between the arcs of~$\Gamma$ and those of~$\Gamma'$, defined
by demanding that arcs~$\alpha\subset\Gamma$ and~$\alpha'\subset\Gamma'$ corresponding to each other
have the same starting (equivalently, terminal) portion.
Let~$\alpha_1,\alpha_2,\ldots,\alpha_N$ be all the arcs of~$\Gamma$, and let~$\alpha_1',\alpha_2',\ldots,\alpha_N'$ be the
respective arcs of~$\Gamma'$.

Some connected components of~$\Gamma$ (and hence of~$\Gamma'$) may be disjoint from~$X$, and thus be simple
closed staircase-like curves, which are pairwise disjoint and homologous to each other.
Let~$\gamma_1,\gamma_2,\ldots,\gamma_K$ be all these components numbered using the following recipe.
Choose a point~$x_0\in X$ if $X$ is non-empty, and~$x_0\in\mathbb T^2\setminus(\Gamma\cup\Gamma')$ otherwise.
Choose also an oriented loop~$\beta\subset\mathbb T^2$ based at~$x_0$ that intersects each~$\gamma_i$ exactly once.
The numbering of~$\gamma_i$'s is chosen according to the order in which the points~$\gamma_i\cap\beta$ follow on~$\beta$.

Let~$\gamma_1',\gamma_2',\ldots,\gamma_K'$ be the closed components of~$\Gamma'\setminus X$
numbered so that an isotopy bringing~$\Gamma$ to~$\Gamma'$ and fixed on~$X\cup\{x_0\}$ brings~$\gamma_i$ to~$\gamma_i'$, $i=1,\ldots,K$.

We proceed by induction in
\begin{equation}\label{cRR-eq}
c(R,R')=(K+1)\Bigl(\sum_{\beta,\beta'}\chi(\beta\cap\beta')-N\Bigr)+\bigl|\{i=1,\ldots,K:\gamma_i\ne\gamma_i'\}\bigr|,
\end{equation}
where~$\chi$ denotes the Euler characteristics, and the sum is taken over all connected components~$\beta$ of~$\Gamma\setminus X$
and all connected components~$\beta'$ of~$\Gamma'\setminus X$.

The equality~$c(R,R')=0$ means that~$R$ and~$R'$ coincide. This is the induction base.

Suppose that~$c(R,R')>0$ and~$\sum_{\beta,\beta'}\chi(\beta\cap\beta')=N$. This means that all the arcs of~$\Gamma'$
coincide with the respective arcs of~$\Gamma$, and, for any~$i,j\in\{1,\ldots,K\}$, the curves~$\gamma_i$ and~$\gamma_j'$
are either coincident or disjoint.

Let~$k$ be the minimal index such that~$\gamma_k\ne\gamma_k'$. Then~$\gamma_k$ and~$\gamma_k'$ cut
the torus~$\mathbb T^2$ into two annuli. Let~$A$ be the one of these annuli that does not contain the point~$x_0$.
The interior of~$A$ is disjoint either from~$\Gamma$ or from~$\Gamma'$. Without loss of generality
we may assume the former. There is a rectangular diagram of a link~$R''$ such that~$\Gamma_{\overrightarrow{\mathrm{II}}}(R'')=
(\Gamma\setminus\gamma_k)\cup\gamma_k'$. We have~$[R]_{\overrightarrow{\mathrm{II}}}=[R'']_{\overrightarrow{\mathrm{II}}}$
by Lemma~\ref{step-lem} and~$c(R'',R')<c(R,R')$, which gives the induction step.

Now suppose that~$\sum_{\beta,\beta'}\chi(\beta\cap\beta')>N$. This means that, for some~$i\in\{1,\ldots,N\}$,
we have~$\alpha_i\ne\alpha_i'$ or, for some~$i,j\in\{1,\ldots,K\}$, we have~$\gamma_i\cap\gamma_j'\ne\varnothing$,
$\gamma_i\ne\gamma_j'$. In both cases, we claim that either~$c(R,R')$ can be reduced by
a small perturbation of~$R$ or~$R'$ keeping the set of double points of~$\Gamma$ fixed, or
there is a disc~$d\subset\mathbb T^2$ co-bounded by two staircase arcs~$\beta,\beta'$ such
that
$$d\cap\Gamma=\beta\subset\Gamma\setminus X,\qquad d\cap\Gamma'=\beta'\subset\Gamma'\setminus X.$$

Take this claim for granted for the moment. In the former case, the induction step is obvious. In the latter case,
there is a rectangular diagram of a link~$R''$ such that~$\Gamma_{\overrightarrow{\mathrm{II}}}(R'')=(\Gamma\setminus\beta)\cup
\beta'$. By Lemma~\ref{step-lem}, for such a diagram, we again have~$[R]_{\overrightarrow{\mathrm{II}}}=[R'']_{\overrightarrow{\mathrm{II}}}$.
The first summand in~\eqref{cRR-eq} decreases by~$K+1$ when~$R$ is replaced by~$R''$,
whereas the second summand may increase by at most~$K$
(as a result of possible renumbering of~$\gamma_i$'s). Hence~$c(R'',R')<c(R,R')$, and the induction step follows.

Now we prove the claim.
A disc in~$\mathbb T^2$ disjoint from~$X$ and
co-bounded by a subarc of~$\Gamma\setminus X$ and a subarc of~$\Gamma'\setminus X$ will be referred to as \emph{a bigon of~$\Gamma$
and~$\Gamma'$}. If these subarcs are the \emph{only} intersections of the bigon with~$\Gamma$ and~$\Gamma'$, then the bigon will be called \emph{clean}. We use a similar terminology for the full preimages~$\widetilde\Gamma$ and~$\widetilde\Gamma'$
of~$\Gamma$ and~$\Gamma'$, respectively,
under the projection map~$\mathbb R^2\rightarrow\mathbb T^2=\mathbb R^2/(2\pi\mathbb Z^2)$.
The set of double points of~$\widetilde\Gamma$ (equivalently, of~$\widetilde\Gamma'$) is denoted by~$\widetilde X$.

Suppose that~$\alpha_i\ne\alpha'_i$ for some~$i\in\{1,\ldots,N\}$. Choose preimages~$\widetilde\alpha_i$ and~$\widetilde\alpha_i'$
of these arcs
in~$\mathbb R^2$ so that~$\partial\widetilde\alpha_i=\partial\widetilde\alpha_i'$.
(The arcs~$\alpha_i$ and~$\alpha_i'$ may form closed loops based at a point from~$X$,
in which case~$\widetilde\alpha_i$ and~$\widetilde\alpha_i'$ are defined as the closures
of preimages of~$\alpha_i\setminus X$ and~$\alpha_i'\setminus X$, such that~$\partial\widetilde\alpha_i=\partial\widetilde\alpha_i'$.)

By the hypothesis of the lemma, the staircase
arcs~$\widetilde\alpha_i$ and~$\widetilde\alpha'_i$ are isotopic
relative to~$\widetilde X$ and coincide near~$\partial\widetilde\alpha_i=\partial\widetilde\alpha_i'\subset\widetilde X$.
This implies the existence of a bigon~$\widetilde d$ of~$\widetilde\Gamma$ and~$\widetilde\Gamma'$
with~$\partial\widetilde d\subset\widetilde\alpha_i\cup\widetilde\alpha_i'$.
However, this bigon is not necessarily clean. If the interior of~$\widetilde d$
has a non-empty intersection with~$\widetilde\Gamma$ or~$\widetilde\Gamma'$,
then a subarc of~$\widetilde\Gamma\setminus\widetilde X$ or~$\widetilde\Gamma'\setminus\widetilde X$
cuts off a smaller bigon from~$\widetilde d$. Let~$\widetilde d_0$ be a minimal bigon of~$\widetilde\Gamma$ and~$\widetilde\Gamma'$
contained in~$\widetilde d$, that is, such that there is no smaller bigon contained in~$\widetilde d_0$.

Let~$\widetilde\beta\subset\widetilde\Gamma\setminus\widetilde X$ and~$\widetilde\beta'\subset\widetilde\Gamma'\setminus\widetilde X$
be the arcs co-bounding~$\widetilde d_0$. By construction, the interior of~$\widetilde d_0$ is disjoint from~$\widetilde\Gamma$
and~$\widetilde\Gamma'$.
If~$\widetilde\beta$ has a non-empty intersection with~$\widetilde\Gamma'$, or~$\widetilde\beta'$ has a non-empty
intersection with~$\widetilde\Gamma$,
then this intersection can be resolved by a small perturbation of~$R$ or~$R'$, which results in decreasing of~$c(R,R')$.
If~$\beta\cap\Gamma'=\varnothing=\beta'\cap\Gamma$, then the bigon~$\widetilde d_0$ is clean, and so is its image~$d_0$
in~$\mathbb T^2$. The claim follows.

Now suppose that~$\alpha_i=\alpha_i'$ for all~$i=1,\ldots,N$.
Let $i,j\in\{1,\ldots,K\}$ be such that~$\gamma_i\cap\gamma_j'\ne\varnothing$, $\gamma_i\ne\gamma_j'$.
If~$\gamma_i\cap\gamma_j'$ is a single point, this intersection can be resolved by a small perturbation of~$R$ or~$R'$,
which results in decreasing of~$c(R,R')$.
Otherwise we find a bigon~$\widetilde d$
of~$\widetilde\Gamma$ and~$\widetilde\Gamma'$ co-bounded by some~$\beta\subset\widetilde\gamma_i$
and~$\beta'\subset\widetilde\gamma_j'$, where~$\widetilde\gamma_i$ and~$\widetilde\gamma_j'$ are preimages of~$\gamma_i$
and~$\gamma_j'$, respectively, in~$\mathbb R^2$, and proceed as above.
\end{proof}

\begin{proof}[Proof of Theorem~\ref{main-tech-theo}]
Due to symmetry it suffices to prove the assertion for any of the four types of stabilizations. We choose~$T=\overrightarrow{\mathrm{II}}$.

Two rectangular diagrams of a link~$R$ and~$R'$ (or, more generally, any two pairs~$(R^+,R^-)$, $({R'}^+,{R'}^-)$
of finite subsets of~$\mathbb T^2$)
are called \emph{combinatorially equivalent} if
there are orientation-preserving self-homeomorphisms~$f,g$ of~$\mathbb S^1$ such that~$(f\times g)(R^\pm)={R'}^\pm$.

Two rectangular diagrams of a link~$R$ and~$R'$
are said to be of the same \emph{$\overrightarrow{\mathrm{II}}$-homology type} if there is a rectangular diagram of a link~$R''$
such that~$R'$ and~$R''$ are combinatorially equivalent, and~$\Gamma=\Gamma_{\overrightarrow{\mathrm{II}}}(R)$
is isotopic to~$\Gamma_{\overrightarrow{\mathrm{II}}}(R'')$ relative to the set of double points of~$\Gamma$.
This is clearly an equivalence relation.
It follows from Lemma~\ref{r=r-lem} that the coincidence of the $\overrightarrow{\mathrm{II}}$-homology types
of~$R$ and~$R'$ implies~$[R]_{\overrightarrow{\mathrm{II}}}=[R']_{\overrightarrow{\mathrm{II}}}$.

\begin{rema}
The term `homology type' is justified by the fact that the $\overrightarrow{\mathrm{II}}$-homology type of a diagram~$R$ is
determined by the homological information about~$\Gamma_{\overrightarrow{\mathrm{II}}}(R)$, which
can be encoded by the function~$\psi:H_1(\Gamma,X;\mathbb Z)\rightarrow\mathbb Z$ defined by
$$\psi(z)=\bigl|\{\alpha\subset\Gamma\setminus X:[\overline\alpha]=z\}\bigr|,$$
where~$\Gamma=\Gamma_{\overrightarrow{\mathrm{II}}}(R)$, and~$X$ is the set of double points of~$\Gamma$.
\end{rema}

Now we claim that, for any~$k,l\in\mathbb N$ and~$m\in\mathbb N\cup\{0\}$, there are only finitely many pairwise
distinct $\overrightarrow{\mathrm{II}}$-homology types of diagrams~$R$ such that~$\omega_{\overrightarrow{\mathrm{II}}}(R)=(k,l,m)$.

Indeed, if~$\omega_{\overrightarrow{\mathrm{II}}}(R)=(k,l,0)$, then~$\Gamma_{\overrightarrow{\mathrm{II}}}(R)$ is a union
of~$\mathrm{lcd}(k,l)$ simple closed curves in~$\mathbb T^2$ having homology class~$(k,l)/\mathrm{lcd}(k,l)$.
This means that the $\overrightarrow{\mathrm{II}}$-homology type is completely determined by~$k,l$.

Suppose $\omega_{\overrightarrow{\mathrm{II}}}(R)=(k,l,m)$ with~$m>0$. Denote by~$X$ the set of double points
of~$\Gamma=\Gamma_{\overrightarrow{\mathrm{II}}}(R)$. Let~$\theta_1,\theta_2,\ldots,\theta_K\in\mathbb S^1$
(respectively, $\varphi_1,\varphi_2,\ldots,\varphi_L\in\mathbb S^1$) be all the points in the projection~$p_\theta(X)$ (respectively,
$p_\varphi(X)$) numbered according to their cyclic order in~$\mathbb S^1$. We have~$K,L\leqslant m$.

Pick an~$\varepsilon>0$ smaller than one half of the length of the shortest interval among those into which~$\mathbb S^1$
is cut by~$p_\theta(R)\cup p_\varphi(R)$. Then whenever~$(\theta_i,\varphi_j)\in X$, we will have
\begin{equation}\label{cross-eq}
\Gamma\cap[\theta_i-\varepsilon;\theta_i+\varepsilon]\times[\varphi_j-\varepsilon;\varphi_j+\varepsilon]=
\bigl([\theta_i-\varepsilon;\theta_i+\varepsilon]\times\{\varphi_j\}\bigr)\cup
\bigl(\{\theta_i\}\times[\varphi_j-\varepsilon;\varphi_j+\varepsilon]\bigr).
\end{equation}

Denote the set~$\bigcup_{i=1}^K\{\theta_i-\varepsilon,\theta_i+\varepsilon\}\subset\mathbb S^1$ by~$\Theta$
and~$\bigcup_{j=1}^L\{\varphi_j-\varepsilon,\varphi_j+\varepsilon\}\subset\mathbb S^1$ by~$\Phi$.
Due to the choice of~$\varepsilon$, we have~$\Theta\cap p_\theta(R)=\varnothing=\Phi\cap p_\varphi(R)$
since~$p_\theta(X)\subset p_\theta(R)$ and~$p_\varphi(X)\subset p_\varphi(R)$.
Therefore, whenever~$\theta_0\in\Theta$ (respectively, $\varphi_0\in\Phi$), the meridian~$\{\theta_0\}\times\mathbb S^1$
(respectively, the longitude~$\mathbb S^1\times\{\varphi_0\}$) intersects~$\Gamma$ in exactly~$k$ (respectively,~$l$)
points.

Denote by~$Y$ the set of all such intersection points:
$$Y=\bigl((\Theta\times\mathbb S^1)\cup(\mathbb S^1\times\Phi)\bigr)\cap\Gamma.$$
We claim that the homology type of~$R$ can be recovered from~$X$ and~$Y$. Indeed, we can recover
the subsets~$\Theta$ and~$\Phi$ as they are the projections~$p_\theta(Y)$ and~$p_\varphi(Y)$.

Now let~$r$ be the closure of a connected component of~$\mathbb T^2\setminus\bigl((\Theta\times\mathbb S^1)\cup(\mathbb S^1\times\Phi)\bigr)$.
By construction, $r$ is a rectangle which is either disjoint from~$X$ or contains exactly one point from~$X$.
In the former case, we can recover~$\Gamma\cap r$ up to isotopy relative to~$\partial r$
since~$\partial r\cap\Gamma\subset Y$
and~$\Gamma\cap r$ is a union of pairwise disjoint staircase arcs on which the functions~$\theta$, $\varphi$
are non-decreasing (some of these arcs may be degenerate to a single point). In the latter case,
the intersection~$\Gamma\cap r$ is completely known due to~\eqref{cross-eq}.

The number of points in~$Y$ is bounded from above by a function of~$k,l,m$:
$$|Y|=2Kk+2Ll\leqslant2m(k+l).$$
Therefore, for any fixed triple~$(k,l,m)$ there are only finitely many combinatorial types of pairs~$(X,Y)$ that
can arise in this construction, and hence, the number of homology types or rectangular diagrams~$R$
with~$\omega_{\overrightarrow{\mathrm{II}}}(R)=(k,l,m)$ is also finite.

In a similar fashion one can show that, for any fixed triple~$(k,l,m)$,
the number of pairs of homology types~$(Z,Z')$ of rectangular diagrams
such that, for some~~$R\in Z$,
$R'\in Z'$, we have~$\omega_{\overrightarrow{\mathrm{II}}}(R)=\omega_{\overrightarrow{\mathrm{II}}}(R')=(k,l,m)$
and~$R\mapsto R'$
is either an exchange move or a type~$\overrightarrow{\mathrm{II}}$ stabilization, is also finite.

Thus, an algorithm to decide wether~$[R]_{\overrightarrow{\mathrm{II}}}=[R']_{\overrightarrow{\mathrm{II}}}$ is constructed as follows.
First, compute~$\omega_{\overrightarrow{\mathrm{II}}}(R)$ and~$\omega_{\overrightarrow{\mathrm{II}}}(R')$.
If~$\omega_{\overrightarrow{\mathrm{II}}}(R)\ne\omega_{\overrightarrow{\mathrm{II}}}(R')$,
then~$[R]_{\overrightarrow{\mathrm{II}}}\ne[R']_{\overrightarrow{\mathrm{II}}}$ by Lemma~\ref{omega-inv-lem}.

If~$\omega_{\overrightarrow{\mathrm{II}}}(R)=\omega_{\overrightarrow{\mathrm{II}}}(R')=(k,l,m)$ we construct a graph~$G$
whose vertices are homology types of all rectangular diagrams of links~$R''$ with~$\omega_{\overrightarrow{\mathrm{II}}}(R'')=(k,l,m)$,
and the edges are all pairs~$(Z_1,Z_2)$ of vertices such that there exists an exchange move or a type~$\overrightarrow{\mathrm{II}}$
stabilization~$R_1\mapsto R_2$ with~$R_1\in Z_1$, $R_2\in Z_2$. As we have seen above, this graph is finite.
It is also clear that a procedure to construct this graph as well as to find its vertices~$Z$, $Z'$
with~$Z\ni R$ and~$Z'\ni R'$ can be described in a purely combinatorial way.
Now the equality~$[R]_{\overrightarrow{\mathrm{II}}}=[R']_{\overrightarrow{\mathrm{II}}}$
holds if and only if the vertices~$Z$ and~$Z'$ belong to the same
connected component of~$G$, which is easily checkable.
\end{proof}

\begin{coro}\label{main-coro}
The equivalence problem for transverse links of a topological type that has trivial orientation-preserving
symmetry group is decidable.
\end{coro}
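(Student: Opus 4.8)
The plan is to reduce the corollary to Theorem~\ref{main-tech-theo} by way of the machinery already set up in~\cite{dyn-shast}. First I would recall that, by the bijections stated in the introduction, a transverse link type with respect to~$\xi_+$ is the same thing as an element of~$\mathscr R/\langle\overrightarrow{\mathrm I},\overleftarrow{\mathrm I},\overleftarrow{\mathrm{II}}\rangle$ (and for~$\xi_-$, of~$\mathscr R/\langle\overrightarrow{\mathrm I},\overleftarrow{\mathrm{II}},\overrightarrow{\mathrm{II}}\rangle$). Hence deciding transverse equivalence of two transverse links amounts to deciding, for two rectangular diagrams of a link~$R_1,R_2$ presenting them, whether~$[R_1]=[R_2]$ in the relevant quotient. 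Since by hypothesis both links are of a fixed topological type~$L$ whose orientation-preserving symmetry group is trivial, I am in exactly the situation treated in~\cite{dyn-shast}, except that the ambient equivalence relation now contains one oriented stabilization type beyond the Legendrian relation~$\langle\overrightarrow{\mathrm I},\overleftarrow{\mathrm I}\rangle$, namely~$\overleftarrow{\mathrm{II}}$ (for~$\xi_+$).

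Next I would import the reduction of~\cite{dyn-shast}. There, the equivalence problem for Legendrian links of a topological type with trivial orientation-preserving symmetry group is solved by combining two ingredients: the finiteness, and hence trivial decidability, of exchange classes, i.e.\ of the elements of~$\mathscr R/\langle\varnothing\rangle$, together with the simplification results of~\cite{distinguishing}. The role of the triviality hypothesis is to make the underlying topological isotopy matching the two links essentially unique, which in turn bounds the complexity of the diagrams one must inspect and renders the relevant search space finite and effectively constructible. The content of~\cite[Remark~7.1]{dyn-shast} is that this scheme carries over to transverse links by the same argument, the only missing ingredient being an algorithm that decides equivalence in each of the one-move quotients~$\mathscr R/\langle\overrightarrow{\mathrm I}\rangle$, $\mathscr R/\langle\overleftarrow{\mathrm I}\rangle$, $\mathscr R/\langle\overrightarrow{\mathrm{II}}\rangle$, and~$\mathscr R/\langle\overleftarrow{\mathrm{II}}\rangle$; the presence of this additional oriented move is precisely what separates the transverse relation from the Legendrian one.

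Finally, I would supply that missing ingredient by invoking Theorem~\ref{main-tech-theo}, which furnishes, for every~$T\in\{\overrightarrow{\mathrm I},\overleftarrow{\mathrm I},\overrightarrow{\mathrm{II}},\overleftarrow{\mathrm{II}}\}$, an algorithm deciding whether~$[R_1]_T=[R_2]_T$. Feeding these four algorithms into the reduction of~\cite[Remark~7.1]{dyn-shast} completes the decision procedure for transverse equivalence. The genuine difficulty of the corollary is thus entirely concentrated in Theorem~\ref{main-tech-theo}: once the one-move decidability is granted, the passage to the transverse equivalence problem is the already-established combinatorial reduction, and the only point requiring care is to confirm that no new phenomenon arises in the transverse setting beyond the single extra stabilization type, which is exactly what~\cite[Remark~7.1]{dyn-shast} asserts. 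Accordingly I would write the proof as: by Theorem~\ref{main-tech-theo} the equivalence problem is decidable for each of the four one-move quotients, and by~\cite[Remark~7.1]{dyn-shast} this is what is needed to transfer the argument of~\cite{dyn-shast} from the Legendrian to the transverse case.
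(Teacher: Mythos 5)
Your proposal is correct and takes essentially the same route as the paper: both reduce the corollary to Theorem~\ref{main-tech-theo} via the reduction machinery of~\cite{dyn-shast} under the trivial-symmetry-group hypothesis. The only difference is one of explicitness: the paper spells out the reduction (choosing presentations~$R_1$, $R_2$ of the two transverse links with $[R_1]_{\overrightarrow{\mathrm{II}},\overleftarrow{\mathrm{II}}}=[R_2]_{\overrightarrow{\mathrm{II}},\overleftarrow{\mathrm{II}}}$ and invoking \cite[Theorem~4.2]{dyn-shast}), whereas you cite \cite[Remark~7.1]{dyn-shast} wholesale for that same step.
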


\begin{proof}
Recall that equivalence classes of positively $\xi_+$-transverse links can be viewed as $\xi_+$-Legendrian links modulo
Legendrian isotopy and negative stabilizations, and also as elements of~$\mathscr R/\bigl\langle\overrightarrow{\mathrm I},\overleftarrow{\mathrm I},\overrightarrow{\mathrm{II}}\bigr\rangle$, whereas equivalence classes of $\xi_+$-Legendrian (respectively, $\xi_-$-Legendrian) links are identified
with elements of~$\mathscr R/\langle\overrightarrow{\mathrm I},\overleftarrow{\mathrm I}\rangle$ (respectively,
$\mathscr R/\langle\overrightarrow{\mathrm{II}},\overleftarrow{\mathrm{II}}\rangle$) (see~\cite{dyn-shast,OST}).

The proof of the corollary is parallel to that of~\cite[Theorem~7.1]{dyn-shast}. Namely, for any two topologically equivalent
positively~$\xi_+$-transverse links we can find their presentations by rectangular diagrams~$R_1$, $R_2$
such that~$[R_1]_{\overrightarrow{\mathrm{II}},\overleftarrow{\mathrm{II}}}=[R_2]_{\overrightarrow{\mathrm{II}},\overleftarrow{\mathrm{II}}}$.
If the links~$\widehat R_1$ and~$\widehat R_2$ have trivial orientation-preserving symmetry group,
then by~\cite[Theorem~4.2]{dyn-shast}, we have
$$[R_1]_{\overrightarrow{\mathrm{II}}}=[R_2]_{\overrightarrow{\mathrm{II}}}\quad\Leftrightarrow\quad
[R_1]_{\overrightarrow{\mathrm I},\overleftarrow{\mathrm I},\overrightarrow{\mathrm{II}}}=
[R_2]_{\overrightarrow{\mathrm I},\overleftarrow{\mathrm I},\overrightarrow{\mathrm{II}}}.$$
An application of Theorem~\ref{main-tech-theo} completes the proof.
\end{proof}

\section{Transverse--Legendrian links}
For an introduction to contact topology and the theory of Legendrian and transverse knots
the reader is referred to~\cite{etnyre05} and~\cite{Ge}. Here we consider links which
are Legendrian with respect to one contact structure and transverse to another,
simultaneously.

Namely, let~$\xi_+$ and~$\xi_-$ be the cooriented contact structures on~$\mathbb S^3$ defined
by the $1$-forms
$$\alpha_+=\sin^2\Bigl(\frac{\pi\tau}2\Bigr)\,d\theta+\cos^2\Bigl(\frac{\pi\tau}2\Bigr)\,d\varphi\qquad\text{and}\qquad
\alpha_-=\sin^2\Bigl(\frac{\pi\tau}2\Bigr)\,d\theta-\cos^2\Bigl(\frac{\pi\tau}2\Bigr)\,d\varphi,$$
respectively, that is~$\xi_\pm=\ker\alpha_\pm$. One can see that~$\xi_+$ is nothing else but
the standard contact structure, and~$\xi_-$ is a mirror image of~$\xi_+$.

\begin{defi}
A smooth link in~$\mathbb S^3$ is called \emph{transverse-Legendrian of type~$\overrightarrow{\mathrm{II}}$}
(or simply \emph{transverse-Legendrian})
if it is positively transverse with respect to~$\xi_+$ and Legendrian with respect to~$\xi_-$.

Two transverse-Legendrian links are \emph{equivalent} if they are isotopic within the class of transverse-Legendrian links.
\end{defi}

Let~$L$ be a transverse-Legendrian link.
The contact structures~$\xi_+$ and~$\xi_-$ agree, if their coorientations are ignored, at the union~$\mathbb S^1_{\tau=0}\cup\mathbb
S^1_{\tau=1}$, since we have~$\alpha_+=\pm\alpha_-$ on this subset.
Therefore, $L$ misses the circles~$\mathbb S^1_{\tau=0}$ and~$\mathbb
S^1_{\tau=1}$, and the torus projection~$p_{\theta,\varphi}$ is well defined on the whole of~$L$.
One can also see that the restriction of both forms~$d\theta$ and~$d\varphi$ on~$L$ are non-degenerate and,
moreover, positive with respect to the orientation of~$L$.

Any transverse-Legendrian link~$L$ can be uniquely recovered from its torus projection similarly to the way in which
a Legendrian link is recovered from its front projection. Indeed, since~$\alpha_-|_L=0$,
the following equality holds for the restrictions of the coordinates~$\theta,\varphi,\tau$ on~$L$:
\begin{equation}\label{tau-eq}
\tau=\frac2\pi\sqrt{\arctan\frac{d\varphi}{d\theta}}.\end{equation}
This means that we can describe the set of transverse-Legendrian links completely in terms of torus projections.
Namely, the following statement holds:

\begin{prop}
The torus projection map gives rise to a one-to-one correspondence between transverse-Legendrian links
and subsets~$\Gamma\subset\mathbb T^2$ such that the following holds:
\begin{enumerate}
\item
$\Gamma$ is the image of a smooth immersion~$\mathbb S^1\sqcup\mathbb S^1\sqcup\ldots\sqcup\mathbb S^1\rightarrow\mathbb T^2$,
\item
the slope of~$\Gamma$ is everywhere positive, and
\item
$\Gamma$ has no self-tangencies.
\end{enumerate}
\end{prop}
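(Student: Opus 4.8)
The plan is to exhibit the torus projection and an explicit lifting procedure as mutually inverse maps between the two sets. First I would treat the forward direction. Let $L$ be a transverse-Legendrian link. As already observed, $L$ is disjoint from $\mathbb S^1_{\tau=0}\cup\mathbb S^1_{\tau=1}$, so $\Gamma=p_{\theta,\varphi}(L)$ is defined, and I must verify conditions (1)--(3). Since $d\theta|_L$ and $d\varphi|_L$ are both positive, $dp_{\theta,\varphi}$ restricted to each tangent space $T_xL$ is injective, so the composition of a parametrization of $L$ with $p_{\theta,\varphi}$ is a smooth immersion of $\mathbb S^1\sqcup\ldots\sqcup\mathbb S^1$, giving (1); the same positivity forces the slope $d\varphi/d\theta$ to lie in $(0;\infty)$ everywhere, which is (2). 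For (3), I would argue by contradiction: a self-tangency of $\Gamma$ is a pair of points of $L$ with the same image in $\mathbb T^2$ and the same tangent direction, hence the same value of the slope $d\varphi/d\theta$; but by the Legendrian condition $\alpha_-|_L=0$ the slope determines $\tau$ through \eqref{tau-eq}, so the two points would coincide in $\mathbb S^3$, contradicting that $L$ is embedded.

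For the reverse direction, given $\Gamma$ satisfying (1)--(3) I would define $L$ by lifting: at each point of a parametrizing immersion of $\Gamma$ the slope lies in $(0;\infty)$, so \eqref{tau-eq} assigns a value $\tau\in(0;1)$, and $L$ is the resulting curve in $\mathbb S^3\setminus(\mathbb S^1_{\tau=0}\cup\mathbb S^1_{\tau=1})$. By construction $\alpha_-|_L=0$, so $L$ is Legendrian for $\xi_-$; substituting $\cos^2(\pi\tau/2)\,d\varphi=\sin^2(\pi\tau/2)\,d\theta$ into $\alpha_+$ gives $\alpha_+|_L=2\sin^2(\pi\tau/2)\,d\theta>0$, since $\tau\in(0;1)$ and the orientation of each component can be fixed so that $d\theta>0$; thus $L$ is positively transverse to $\xi_+$. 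Because $\tau$ is a smooth function of the finite positive slope of a smooth immersion, and the $\theta$-coordinate already has nonvanishing derivative, the lift is a smooth immersion. Finally, the only points at which distinct points of the source could collide in $\mathbb S^3$ lie over the multiple points of $\Gamma$; condition (3) forces the branches through such a point to have distinct slopes, hence distinct values of $\tau$, so $L$ is in fact embedded.

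It then remains to check that the two constructions are mutually inverse, which is immediate: the lift recovers $L$ from $\Gamma$ precisely because \eqref{tau-eq} determines $\tau$ uniquely from the slope, and reprojecting the lift returns $\Gamma$. I expect the main obstacle to be the verification over the crossings of $\Gamma$: one must confirm that the no-self-tangency hypothesis (3) is exactly what guarantees both the smoothness of the lift as a parametrized curve and, more importantly, its embeddedness, by ruling out any coincidence of $\tau$-values among the branches meeting over a single point of $\mathbb T^2$. The remaining smoothness bookkeeping---that $\tau$, being built from the first-order data of $\Gamma$, still yields a smooth lift---follows the standard front-projection analogy and should be routine.
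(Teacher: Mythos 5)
Your proof is correct and takes essentially the same approach the paper intends: the paper states this proposition with no separate proof, treating it as an immediate consequence of the recovery formula~\eqref{tau-eq}, and your argument is exactly the natural expansion of that observation (use the slope--$\tau$ correspondence to lift, verify the $\xi_-$-Legendrian and positive $\xi_+$-transverse conditions, and invoke the no-self-tangency hypothesis to rule out collisions over crossings in both directions). Nothing needs to be corrected.
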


A subset satisfying Conditions~1--3 of this proposition will be referred as \emph{a (positive) torus front}.

A torus front is said to be \emph{almost generic} if it has no self-intersections
of multiplicity higher than two, and \emph{generic} if, additionally, no meridian or longitude of~$\mathbb T^2$
contains more than one self-intersection points of the front. An example of a generic torus front is shown in Figure~\ref{t-front-ex-fig}.
\begin{figure}[ht]
\includegraphics[scale=.3]{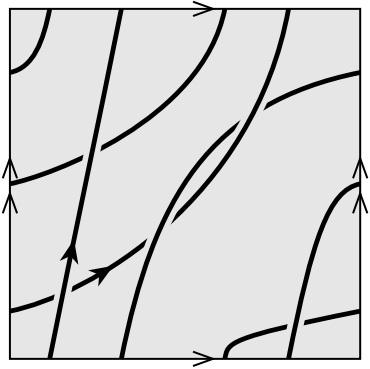}
\caption{A generic positive torus front}\label{t-front-ex-fig}
\end{figure}
We use a convention that, at every crossing point, the arc with with larger slope
is shown as overcrossing. Due to~\eqref{tau-eq},
this agrees with the position of the corresponding transverse-Legendrian link in~$\mathbb S^3$.
We also indicate the orientation of the corresponding transverse-Legendrian link.

\begin{prop}\label{R3-prop}
Two almost generic torus fronts define equivalent transverse-Legendrian links if and only if they are
obtained from one another by a sequence of continuous deformations in the class of almost generic torus fronts,
and type~III Reidemeister moves.
\end{prop}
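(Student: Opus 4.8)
The plan is to translate equivalence of transverse-Legendrian links into a statement about paths in the space of positive torus fronts, and then run a general-position argument that identifies exactly which codimension-one events such a path can cross.

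First I would dispose of the easy implication. By the preceding Proposition, a continuous deformation of fronts within the class of torus fronts lifts canonically to an isotopy of the associated transverse-Legendrian links, since the entire link is recovered from its front via~\eqref{tau-eq}. A type~III move occurs at a momentary triple point; resolving it pushes one branch across the crossing of the other two, and because the over/undercrossing datum at each crossing is dictated by the slopes through~\eqref{tau-eq}, which vary continuously through the move, this is realized by an isotopy within the transverse-Legendrian class. Hence fronts related by continuous deformations in the almost generic class and type~III moves yield equivalent links.

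For the converse, let $L_0$ and $L_1$ be equivalent transverse-Legendrian links with almost generic fronts $\Gamma_0$ and $\Gamma_1$. An equivalence is an isotopy $\{L_t\}_{t\in[0;1]}$ through transverse-Legendrian links; applying the torus projection yields a path $\{\Gamma_t\}$ in the space $\mathcal F$ of positive torus fronts. I would stratify $\mathcal F$ by its worst singularity: the generic fronts form an open dense top stratum, and there are precisely two kinds of codimension-one strata, namely fronts carrying a single transverse triple point, and fronts having two ordinary double points sharing a meridian or a longitude; every remaining degeneration (quadruple points, two coincident triple points, a triple point aligned with a double point, and so on) has codimension at least two. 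Observe that a transverse triple point still satisfies Conditions~1--3, so it is a genuine, non-almost-generic member of $\mathcal F$, and the triple-point wall lies \emph{inside} $\mathcal F$.

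The crucial point is what $\mathcal F$ omits. Because every front has everywhere positive slope (Condition~2), no branch ever develops a vertical or horizontal tangent or a cusp, so the type~I wall is absent from $\mathcal F$. Because torus fronts have no self-tangencies (Condition~3), the self-tangency locus---where a type~II move would take place---lies outside $\mathcal F$ altogether, and since the path $\{\Gamma_t\}$ stays in $\mathcal F$ by hypothesis, it never meets this locus. Thus only the two codimension-one strata above are available to the path. By standard jet-transversality I would perturb $\{\Gamma_t\}$ rel endpoints, keeping the endpoints almost generic and the whole path inside the open set $\mathcal F$, so that it meets these strata transversally at finitely many distinct parameter values and avoids all strata of codimension $\geqslant 2$. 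On each open subinterval the front is generic, hence almost generic, so the restricted path is a continuous deformation in the almost generic class; crossing a stratum of the second kind merely interchanges the meridian (or longitude) order of two crossings and remains almost generic, so it too is such a deformation; and crossing the triple-point stratum is, by inspection of the local model, exactly a type~III Reidemeister move. Concatenating these pieces joins $\Gamma_0$ to $\Gamma_1$ by continuous almost-generic deformations interspersed with type~III moves, as claimed. The main obstacle is the transversality step: one must verify, in the infinite-dimensional setting of immersions, that a generic path avoids the higher-codimension degenerations and crosses the triple-point and alignment walls cleanly, and in particular confirm that the cusp and self-tangency loci are genuinely absent from $\mathcal F$, so that no type~I or type~II move is ever forced.
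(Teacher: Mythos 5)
Your proposal is correct and takes essentially the same approach as the paper: the paper's entire proof is the one-sentence observation that the main, codimension-one stratum of the set of non-almost-generic torus fronts consists of fronts with a single triple self-intersection point, which is precisely the stratification and general-position argument you spell out. The details you supply --- positivity of slope ruling out cusps (type~I events), the built-in absence of self-tangencies ruling out type~II events, and the transversality of a perturbed path rel endpoints --- are exactly what the paper leaves implicit as \emph{obvious}.
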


\begin{proof}
This follows from the obvious fact that the main, codimension-one stratum of
the set of non-almost generic torus fronts consists of torus fronts having a triple self-intersection point.
\end{proof}

With every rectangular diagram of a link~$R$ we associate an equivalence class~$\mathrm{TL}_{\overrightarrow{\mathrm{II}}}(R)$
of transverse-Legendrian links
by demanding that a torus front representing an element of~$\mathrm{TL}_{\overrightarrow{\mathrm{II}}}(R)$
can be obtained by an arbitrarily small (in the $C^0$ sense) perturbation of~$\Gamma_{\overrightarrow{\mathrm{II}}}(R)$.
\begin{figure}[ht]
\begin{tabular}{ccc}
\includegraphics[width=150pt]{tl-example.eps}
&\hbox to 1cm{\hss}&
\includegraphics[width=150pt]{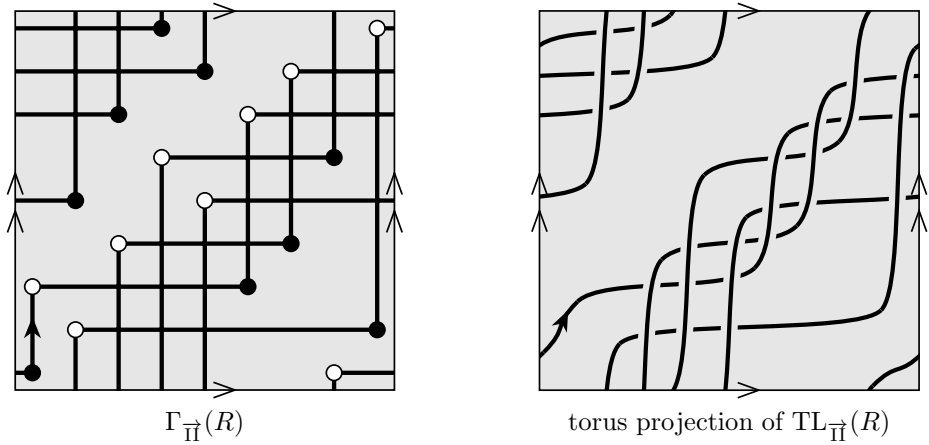}\\
$\Gamma_{\overrightarrow{\mathrm{II}}}(R)$&&
torus projection of~$\mathrm{TL}_{\overrightarrow{\mathrm{II}}}(R)$
\end{tabular}
\caption{Producing a transverse-Legendrian link from a rectangular diagram of a link}\label{tl-convertion-fig}
\end{figure}
See Figure~\ref{tl-convertion-fig} for an example.

\begin{prop}\label{i+ii-prop}
{\rm(i)}
Every equivalence class of type~$\overrightarrow{\mathrm{II}}$ transverse-Legendrian links has the form~$\mathrm{TL}_{\overrightarrow{\mathrm{II}}}(R)$
for some rectangular diagram of a link~$R$.

{\rm(ii)}
Let~$R$ and~$R'$ be rectangular diagrams of a link. Then~$\mathrm{TL}_{\overrightarrow{\mathrm{II}}}(R)=
\mathrm{TL}_{\overrightarrow{\mathrm{II}}}(R')$ implies~$[R]_{\overrightarrow{\mathrm{II}}}=[R']_{\overrightarrow{\mathrm{II}}}$.
\end{prop}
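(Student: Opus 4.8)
The plan is to read both assertions through the definition of $\mathrm{TL}_{\overrightarrow{\mathrm{II}}}(R)$ as a small smoothing of the staircase $\Gamma_{\overrightarrow{\mathrm{II}}}(R)$, turning them into a dictionary between transverse-Legendrian isotopies of positive torus fronts and the moves generating $\langle\overrightarrow{\mathrm{II}}\rangle$.

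For part~(i) I would take any positive torus front $\Gamma$ representing the given class and $C^0$-approximate it by a staircase curve $\Gamma_0$: on each maximal smooth arc, where both $\theta$ and $\varphi$ strictly increase, I replace $\Gamma$ by a monotone staircase path with the same endpoints contained in a small neighbourhood of the arc. Positivity of the slope guarantees that the straight-line homotopy from $\Gamma$ to $\Gamma_0$ stays among positive torus fronts, so it is a transverse-Legendrian isotopy; choosing the steps generically I may assume all vertical segments carry distinct values of $\theta$ and all horizontal segments distinct values of $\varphi$. Declaring a corner positive when a horizontal segment is followed by a vertical one and negative in the opposite case produces $R=R^+\sqcup R^-$, and monotonicity forces the two kinds of corner to alternate along each component, which yields the injectivity of $p_\theta,p_\varphi$ on each of $R^\pm$ together with $p_\theta(R^+)=p_\theta(R^-)$ and $p_\varphi(R^+)=p_\varphi(R^-)$. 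Hence $R$ is a rectangular diagram with $\Gamma_{\overrightarrow{\mathrm{II}}}(R)=\Gamma_0$, and $\mathrm{TL}_{\overrightarrow{\mathrm{II}}}(R)$ is the class we started with.

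For part~(ii) I would invoke Proposition~\ref{R3-prop}: the equality of the two classes furnishes a finite sequence of continuous deformations within the almost generic class and type~III moves joining the smoothings of $\Gamma_{\overrightarrow{\mathrm{II}}}(R)$ and $\Gamma_{\overrightarrow{\mathrm{II}}}(R')$. Carrying a staircase approximation along this isotopy yields a chain of rectangular diagrams from $R$ to $R'$, and it remains to recognise each elementary event as an $\overrightarrow{\mathrm{II}}$-move. A continuous deformation never creates or destroys a crossing---that would require a self-tangency, forbidden for torus fronts---so it preserves the Gauss code of the front; the induced change of the staircase is at most a combinatorial reshuffling of its corners, realised by exchange moves (small perturbations of a diagram, as in the proof of Lemma~\ref{step-lem}), while any refinement or coarsening of the approximation amounts to inserting or deleting a small step that smooths away, i.e.\ a type~$\overrightarrow{\mathrm{II}}$ (de)stabilisation. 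Whenever two corners interchange their $\theta$- or $\varphi$-order with no crossing between them, the closure of $\Gamma_{\overrightarrow{\mathrm{II}}}(R)\triangle\Gamma_{\overrightarrow{\mathrm{II}}}(R')$ bounds an embedded disc meeting $R\cup R'$ only in its corners, so Lemma~\ref{step-lem} delivers $[R]_{\overrightarrow{\mathrm{II}}}=[R']_{\overrightarrow{\mathrm{II}}}$.

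The crux is the type~III move, the only event that permutes the double points of the front and therefore escapes Lemma~\ref{r=r-lem}. Here I would examine the local triangle formed by the three participating strands: in staircase language it is a reconfiguration of three crossings, and I would exhibit it as an explicit composition of exchange moves, possibly prefaced by a type~$\overrightarrow{\mathrm{II}}$ stabilisation that lengthens one of the three arcs so that the sliding strand can be pushed past the opposite crossing while every intermediate diagram stays rectangular. Checking that this local sequence lies in $\langle\overrightarrow{\mathrm{II}}\rangle$---and that the staircases immediately before and after the move satisfy the hypotheses of Lemma~\ref{step-lem}---is the technical heart of the argument; concatenating all the elementary steps would then give $[R]_{\overrightarrow{\mathrm{II}}}=[R']_{\overrightarrow{\mathrm{II}}}$, completing the proof.
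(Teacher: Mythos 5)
Your strategy is essentially the one the paper follows: prove (i) by approximating a positive torus front by a staircase curve, and prove (ii) by carrying staircase approximations along a one-parameter family of torus fronts, using Lemma~\ref{step-lem}/Lemma~\ref{r=r-lem} while the family stays generic and recognizing the finitely many critical events as moves generating $\langle\overrightarrow{\mathrm{II}}\rangle$. Your part (i) is fine, provided you add the one point the paper insists on: the staircase must be chosen with \emph{the same set of double points} as the front, since $C^0$-closeness alone does not guarantee that smoothing the staircase returns the original transverse-Legendrian class.

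In part (ii), however, there is a genuine gap, and it sits exactly where you place it: the type~III move. You do not show that it lies in $\langle\overrightarrow{\mathrm{II}}\rangle$; you only announce a plan (``an explicit composition of exchange moves, possibly prefaced by a type~$\overrightarrow{\mathrm{II}}$ stabilisation'') and defer the verification, calling it the technical heart. Since every other step of your argument is routine, this deferred check \emph{is} the proof, so the proposal is incomplete as written. The paper closes this point with a cleaner observation (Figure~\ref{R-+}(b)): at a critical moment $t_i$ the non-generic front $\Gamma_{t_i}$ itself admits two staircase approximations $\Gamma_{\overrightarrow{\mathrm{II}}}(R_{t_i}')$ and $\Gamma_{\overrightarrow{\mathrm{II}}}(R_{t_i}'')$, whose homology types match those of $R_t$ on the two adjacent intervals and which differ by a \emph{single exchange move} --- no stabilization is needed. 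The same device handles the other codimension-one event, namely two double points reaching a common meridian or longitude (Figure~\ref{R-+}(a)), which your sketch treats too loosely: such events do occur inside ``continuous deformations in the almost generic class'' furnished by Proposition~\ref{R3-prop}, and for them your appeal to Lemma~\ref{step-lem} is not justified, because its hypothesis that the boundary of the disc avoid the double points fails precisely there (the region where the two staircases differ abuts the two crossings being reordered). Those events must be recognized directly as exchange moves of the underlying rectangular diagrams, which is what the paper does; with that, and with the single-exchange-move resolution of the triple point, your concatenation argument goes through.
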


\begin{proof}
Statement~(i) follows from the approximation argument: any generic positive torus front~$\Gamma$ can be
approximated by a union of immersed staircase-like closed curves of the form~$\Gamma_{\overrightarrow{\mathrm{II}}}(R)$,
where~$R$ is a rectangular diagram of a surface, so that~$\Gamma_{\overrightarrow{\mathrm{II}}}(R)$ and~$\Gamma$
have the same set of double points.

To prove statement~(ii),
let~$\Gamma$ and~$\Gamma'$ be generic torus fronts obtained from~$\Gamma_{\overrightarrow{\mathrm{II}}}(R)$
and~$\Gamma_{\overrightarrow{\mathrm{II}}}(R')$, respectively, by a $C^0$-small perturbation.
The equality $\mathrm{TL}_{\overrightarrow{\mathrm{II}}}(R)=\mathrm{TL}_{\overrightarrow{\mathrm{II}}}(R')$
means that there is a continuous $1$-parametric family~$\Gamma_t$, $t\in[0;1]$ of torus fronts such that~$\Gamma_0=\Gamma$,
$\Gamma_1=\Gamma'$. Such a family can be chosen so that there are only finitely many moments~$t=t_1,t_2,\ldots,t_m$ at which the torus front~$\Gamma_t$
is not generic, and at these moments the genericity of~$\Gamma_t$ is unavoidably broken in one of the two simplest ways:
either there are two double points of~$\Gamma_t$ at the same meridian or longitude, or~$\Gamma_t$ has
a triple self-intersection. We may assume that~$t_1<t_2<\ldots<t_m$. We also set~$t_0=0$, $t_{m+1}=1$.

For each~$t\in[0;1]\setminus\{t_1,\ldots,t_m\}$ let~$R_t$ be a rectangular diagram of a link such that~$\Gamma_{\overrightarrow{\mathrm{II}}}(R_t)$
is isotopic to~$\Gamma_t$ relative the set of self-intersections of~$\Gamma_t$. By construction,
the homology type of~$R_t$ is constant
on each of the intervals~$[0;t_1)$, $(t_1;t_2),\ldots$, $(t_{m-1};t_m)$, $(t_m;1]$, and thus, by Lemma~\ref{r=r-lem}, so is~$[R_t]_{\overrightarrow{\mathrm{II}}}$.
At any critical moment~$t_i$ the torus front~$\Gamma_{t_i}$ can be approximated in two different ways
by unions of staircase curves~$\Gamma_{\overrightarrow{\mathrm{II}}}(R_{t_i}')$
and~$\Gamma_{\overrightarrow{\mathrm{II}}}(R_{t_i}'')$, where~$R_{t_i}'$ and~$R_{t_i}''$ are rectangular diagrams of a link such that:
\begin{enumerate}
\item
$R_{t_i}'^-\mapsto R_{t_i}''$ is an exchange move;
\item
the homology type of~$R_{t_i}'$ (respectively, $R_{t_i}''$)
coincides with that of~$R_t$ for~$t\in(t_{i-1};t_i)$ (respectively,
$t\in(t_i;t_{i+1})$).
\end{enumerate}
This is illustrated in Figure~\ref{R-+}.
\begin{figure}[ht]
\begin{tabular}{cccc}
\raisebox{4mm}{(a)}&
\includegraphics[scale=.65]{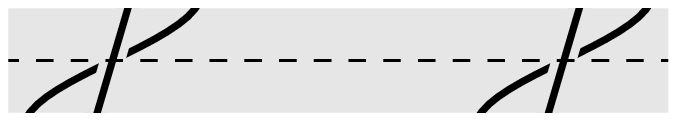}&
\includegraphics[scale=.65]{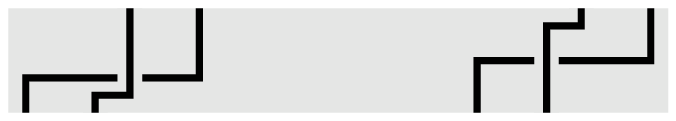}&
\includegraphics[scale=.65]{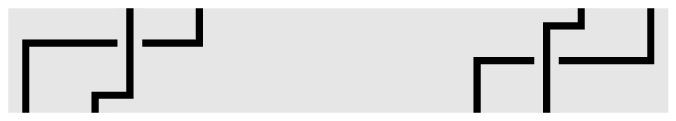}\\
&$\Gamma_{t_i}$&$\Gamma_{\overrightarrow{\mathrm{II}}}(R_{t_i}')$&$\Gamma_{\overrightarrow{\mathrm{II}}}(R_{t_i}'')$\\[5mm]
\raisebox{4mm}{(b)}&
\includegraphics[scale=.65]{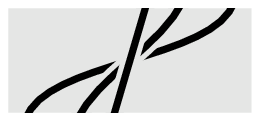}&
\includegraphics[scale=.65]{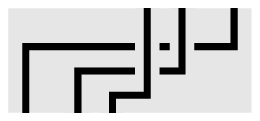}&
\includegraphics[scale=.65]{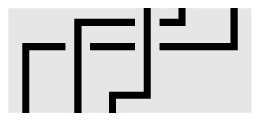}\\
&$\Gamma_{t_i}$&$\Gamma_{\overrightarrow{\mathrm{II}}}(R_{t_i}')$&$\Gamma_{\overrightarrow{\mathrm{II}}}(R_{t_i}'')$
\end{tabular}
\caption{Approximating a non-generic torus front: (a) when two double points appear on the same longitude; (b) when
a triple self-intersection occurs}\label{R-+}
\end{figure}

The claim follows.
\end{proof}

The converse to the assertion~(ii) of Proposition~\ref{i+ii-prop} does not hold in general. Namely,
the equality~$[R]_{\overrightarrow{\mathrm{II}}}=[R']_{\overrightarrow{\mathrm{II}}}$
does not always imply
$\mathrm{TL}_{\overrightarrow{\mathrm{II}}}(R)=
\mathrm{TL}_{\overrightarrow{\mathrm{II}}}(R')$.
However, the elements of~$\mathscr R/\bigl\langle\overrightarrow{\mathrm{II}}\bigr\rangle$
can be classified in terms of transverse-Legendrian links and exchange moves, which we now define.

\begin{defi}
Let~$\Gamma$ and~$\Gamma'$ be two generic positive torus fronts such that there are three
smooth arcs~$\alpha\subset\Gamma$, $\alpha'\subset\Gamma'$, and~$\beta\subset\Gamma\cap\Gamma'$
satisfying the following conditions (see Figure~\ref{3-arcs-fig}):
\begin{enumerate}
\item
the closure of the symmetric difference~$\Gamma\triangle\Gamma'$ is~$\alpha\cup\alpha'$;
\item
there is an embedded closed $2$-disc~$d\subset\mathbb T^2$ such~$\partial d=\alpha\cup\alpha'$;
\item
$\partial\beta\subset\partial d$;
\item $\int_\beta d\theta<2\pi$, $\int_\beta d\varphi<2\pi$;
\item
$\beta$ is homologous, relative to~$d$, either to a meridian or to a longitude of~$\mathbb T^2$;
\item
the intersection~$\beta\cap d$ consists of two arcs~$\gamma$ and~$\gamma'$ such
that~$\partial\gamma\subset\alpha$, $\partial\gamma'\subset\alpha'$;
\item
the interior of~$d$ intersects~$\Gamma\setminus\beta$ (equivalently, $\Gamma'\setminus\beta$) in a union of
pairwise disjoint open arcs each of which separates~$\gamma\setminus\partial\gamma$ from~$\gamma'\setminus\partial\gamma'$;
\item
if $\beta$ is homologous to a meridian (respectively, longitude) relative to~$d$, then at each self-intersection point of~$\Gamma$
or~$\Gamma'$ located at $\partial d\setminus\beta$
the overpassing (respectively, underpassing) arc is a part of~$\partial d$.
\end{enumerate}
Then we say that the passage from~$\Gamma$ to~$\Gamma'$ (or between the respective transverse-Legendrian links)
is called \emph{an exchange move}.
\begin{figure}[ht]
\centerline{\includegraphics[scale=.5]{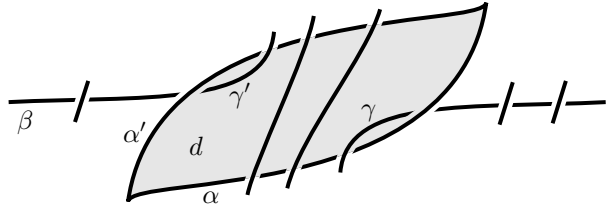}\put(-160,20){$d$}\put(-185,25){$\alpha'$}\put(-155,2){$\alpha$}
\put(-225,30){$\beta$}\put(-145,40){$\gamma'$}\put(-95,35){$\gamma$}}
\caption{The disc~$d$ and the arcs~$\alpha,\beta$ in the definition of an exchange move of
transverse-Legendrian links}\label{3-arcs-fig}
\end{figure}
\end{defi}

Exchange moves of transverse-Legendrian links are illustrated in Figure~\ref{big-mov-fig}.
\begin{figure}[ht]
\centerline{\includegraphics[scale=.3]{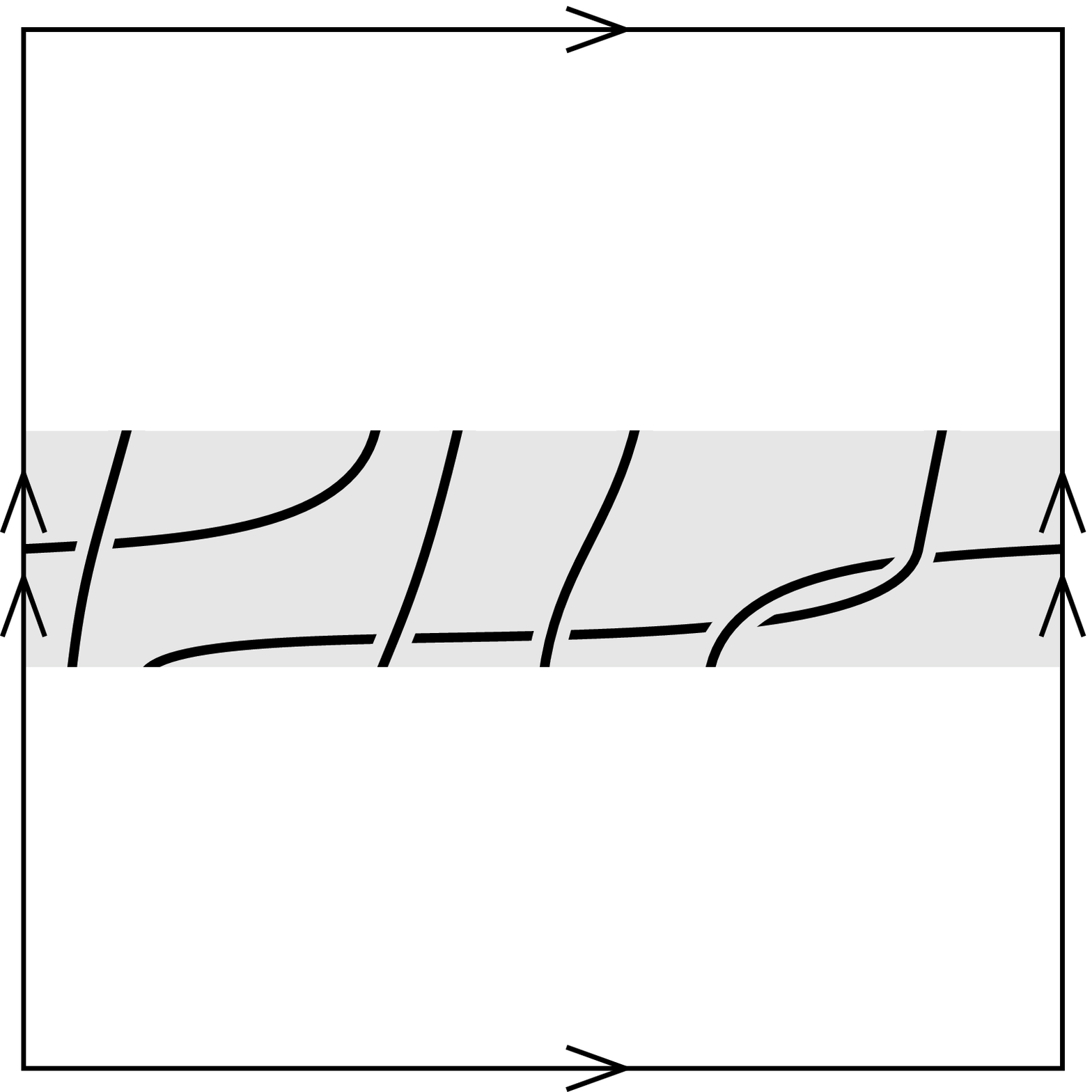}
\hskip1cm\raisebox{65pt}{$\longleftrightarrow$}\hskip1cm
\includegraphics[scale=.3]{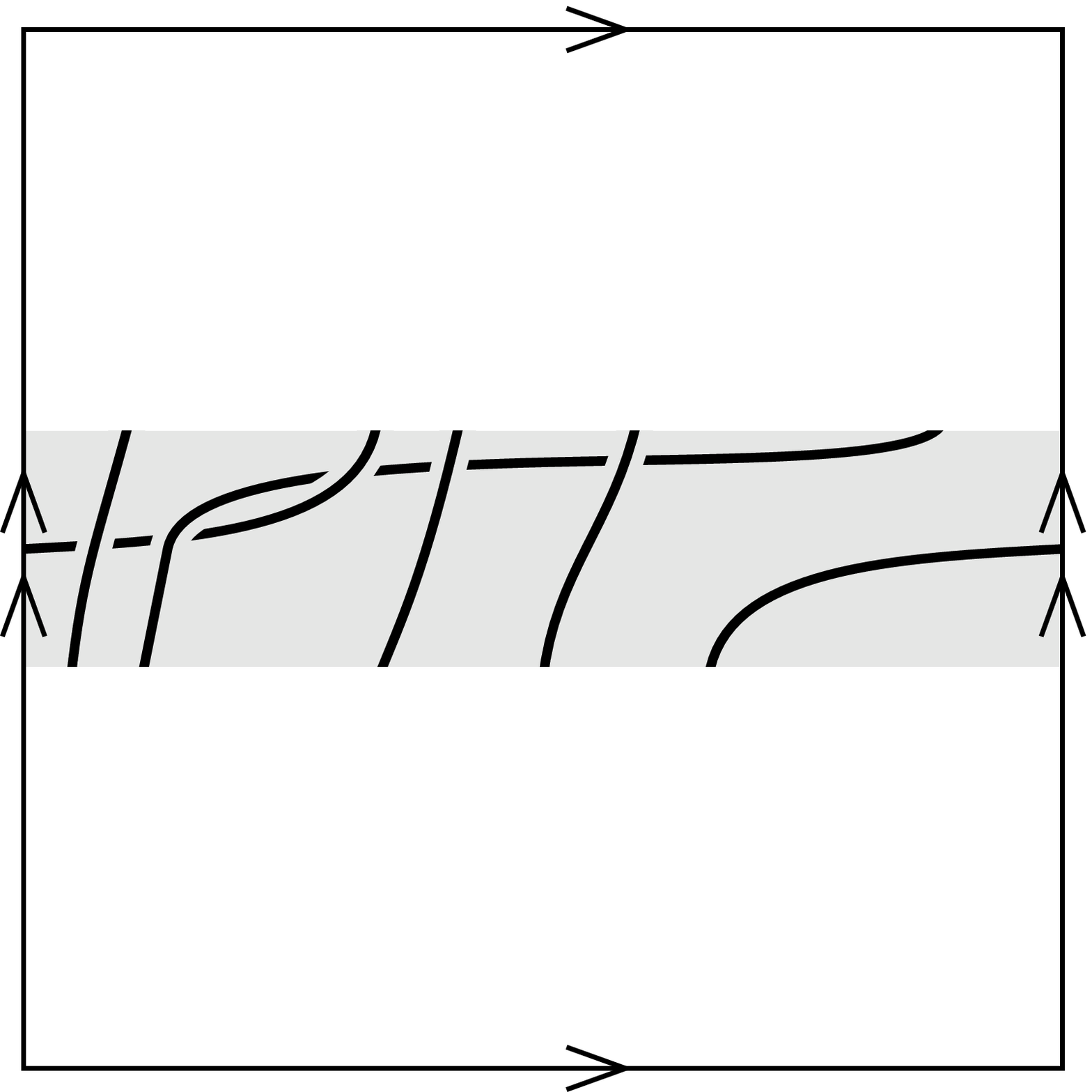}}

\vskip1cm
\centerline{\includegraphics[scale=.3]{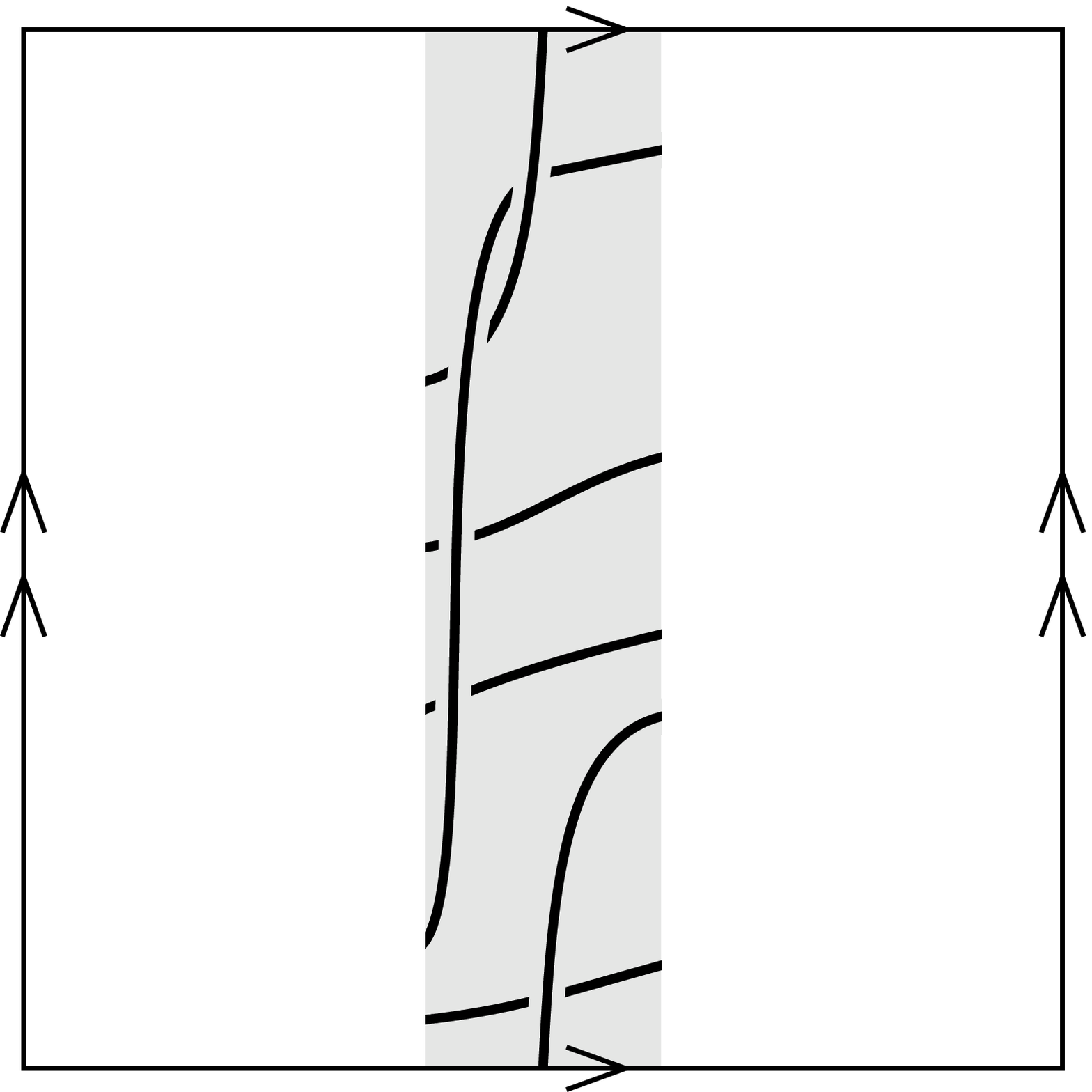}
\hskip1cm\raisebox{65pt}{$\longleftrightarrow$}\hskip1cm
\includegraphics[scale=.3]{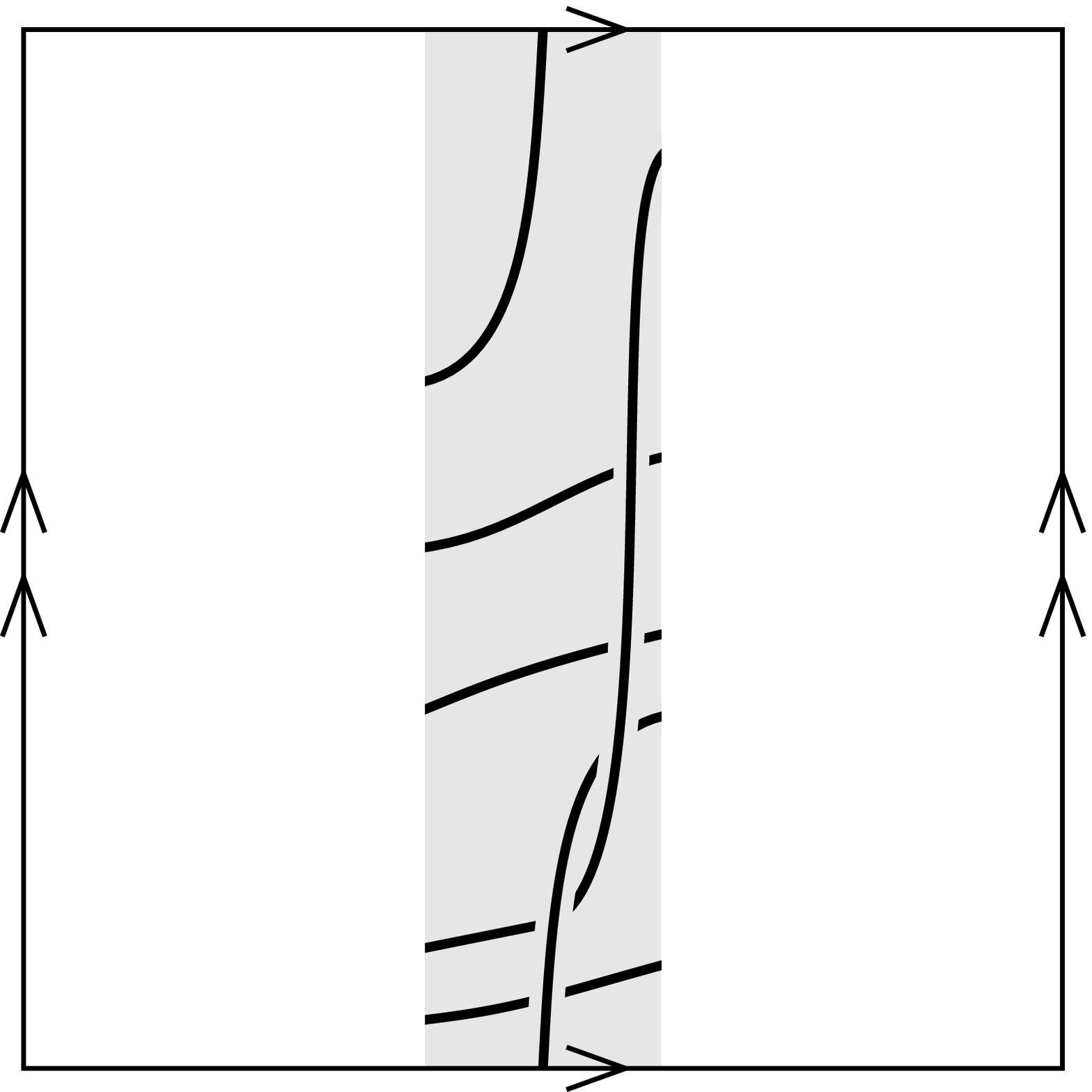}}
\caption{Exchange moves of transverse-Legendrian links}\label{big-mov-fig}
\end{figure}

\begin{prop}\label{class-prop}
Let~$R$ and~$R'$ be rectangular diagrams of a link. Then we have~$[R]_{\overrightarrow{\mathrm{II}}}=[R']_{\overrightarrow{\mathrm{II}}}$
if and only if the type~$\overrightarrow{\mathrm{II}}$ transverse-Legendrian link associated with~$R$ and~$R'$
can be obtained from one another by a finite sequence of isotopies in the class of transverse-Legendrian links,
and exchange moves.
\end{prop}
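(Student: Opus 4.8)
The plan is to prove the two implications separately, working throughout with generic positive torus fronts rather than with the staircase curves themselves. By Proposition~\ref{R3-prop} and the definition of $\mathrm{TL}_{\overrightarrow{\mathrm{II}}}$, I may fix generic fronts $\Gamma$ and $\Gamma'$ obtained from $\Gamma_{\overrightarrow{\mathrm{II}}}(R)$ and $\Gamma_{\overrightarrow{\mathrm{II}}}(R')$ by a $C^0$-small perturbation, and pass freely between a front and an approximating union of staircase curves $\Gamma_{\overrightarrow{\mathrm{II}}}(R'')$ as in Proposition~\ref{i+ii-prop}(i); this is harmless because such an approximation preserves the set of double points and hence, by Lemma~\ref{r=r-lem}, the class $[R'']_{\overrightarrow{\mathrm{II}}}$.

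For sufficiency, suppose the two transverse-Legendrian links are related by a finite sequence of isotopies and exchange moves. An isotopy yields $[R]_{\overrightarrow{\mathrm{II}}}=[R']_{\overrightarrow{\mathrm{II}}}$ directly by Proposition~\ref{i+ii-prop}(ii), so it suffices to treat a single exchange move. Here I would realize the move combinatorially: by condition~7 the arcs of $\Gamma$ meeting the interior of the disc $d$ of Figure~\ref{3-arcs-fig} separate $\gamma$ from $\gamma'$, and therefore slice $d$ into a stack of sub-regions, each co-bounded by two consecutive separating arcs together with subarcs of $\alpha$ and $\alpha'$. Pushing the staircase approximation of $\alpha$ across $d$ one sub-region at a time produces a chain of intermediate diagrams in which each clean slide is covered by Lemma~\ref{step-lem}, while each passage of the arc across a single separating strand is an elementary exchange move on the diagram; the over/under data of condition~8 guarantees that every such passage is indeed an \emph{exchange} move of the correct oriented type. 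Composing the steps gives $[R]_{\overrightarrow{\mathrm{II}}}=[R']_{\overrightarrow{\mathrm{II}}}$.

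For necessity, I would expand the equality $[R]_{\overrightarrow{\mathrm{II}}}=[R']_{\overrightarrow{\mathrm{II}}}$ into a finite sequence of exchange moves and type~$\overrightarrow{\mathrm{II}}$ (de)stabilizations and show that each one induces either a TL-isotopy or a single exchange move of transverse-Legendrian links. As in the proof of Lemma~\ref{omega-inv-lem}, each such move changes $\Gamma_{\overrightarrow{\mathrm{II}}}$ by a rectangle $r$, and the decisive dichotomy is whether the interior of $r$ is disjoint from the rest of the front. If it is, the passage is a continuous deformation of positive torus fronts, with any triple-point crossing absorbed by Proposition~\ref{R3-prop}, hence a TL-isotopy. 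If strands of the front cross $r$, then the passage cannot be realized by a front isotopy at all: two arcs of positive slope can never be pulled across one another locally without a moment of equal slope, i.e.\ a self-tangency, which is forbidden for positive torus fronts. The move is therefore realized by pushing the obstructing arc the long way around the torus, which is precisely an exchange move of transverse-Legendrian links — the swept region is $d$, the pushed arc is $\beta$ (homologous to a meridian or a longitude relative to $d$ by conditions~4--5), and the convention that the steeper arc overcrosses fixes the over/under pattern of condition~8 from the oriented type~$\overrightarrow{\mathrm{II}}$.

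I expect the main obstacle to be precisely this translation between combinatorial moves and exchange moves of fronts, and in particular the bookkeeping of condition~8 together with the homology class of $\beta$ in conditions~4--5. The conceptual heart is the self-tangency obstruction that rules out a local Reidemeister~II for positive torus fronts and thereby \emph{forces} the detour around the torus; verifying that the detour produced by a crossed rectangle matches the defining data of an exchange move (and that the clean (de)stabilizations and clean exchange moves produce genuine TL-isotopies rather than disguised exchange moves) is where the careful work lies. In the sufficiency direction the parallel difficulty is to check that each sub-region cut off from $d$ is a clean bigon to which Lemma~\ref{step-lem} applies and that reverting to staircase curves creates no spurious double points, which I would control using Lemma~\ref{r=r-lem}.
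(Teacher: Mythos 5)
Your sketch of the sufficiency direction is broadly reasonable (the paper in fact leaves that direction to the reader, noting it is not used later), but your necessity argument --- the direction that carries the real content of the proposition --- has a genuine gap, located exactly at your ``decisive dichotomy''. The claim that strands crossing the rectangle $r$ obstruct a front isotopy is false. When $r$ is the vertex-free rectangle $r_0$ of Definition~\ref{moves-def}, every strand meeting its interior is a straight segment (the corners of $\Gamma_{\overrightarrow{\mathrm{II}}}(R)$ are exactly the vertices of $R$, and these are excluded from $r_0$), and the moving arc slides past each such strand with no tangency whatsoever: it meets the strand in exactly one transversal point both before and after the move, and during a suitably chosen sweep this intersection point simply migrates along the strand. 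A self-tangency is the obstruction to a Reidemeister~II move, i.e.\ to \emph{changing the number} of double points, and no such change occurs here; double points lying in the interior of $r_0$ are traversed by Reidemeister~III moves, which preserve the transverse-Legendrian class by Proposition~\ref{R3-prop}. So a move across a crossed $r_0$ is still a TL-isotopy, and your dichotomy misclassifies it. The relevant dichotomy is a different, purely combinatorial one: whether $r$ equals $r_0$ or is one of the complementary rectangles $[\theta_2;\theta_1]\times[\varphi_1;\varphi_2]$, $[\theta_1;\theta_2]\times[\varphi_2;\varphi_1]$ wrapping around the torus (all three cases occur, cf.\ the proof of Lemma~\ref{omega-inv-lem}); only the wrapping case can fail to be an isotopy.

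Moreover, in the wrapping case your conclusion that the move ``is precisely an exchange move of transverse-Legendrian links'' is unjustified. Definition~\ref{moves-def} forbids extra vertices only inside $r_0$; the complementary rectangle may contain vertices of the diagram, hence corners of the front, hence arcs in its interior that do \emph{not} separate $\gamma$ from $\gamma'$ --- violating condition~7 of the definition of an exchange move (closed components cannot occur, since positive fronts have nontrivial homology class, but cornered arcs can). A single elementary move of diagrams is therefore not, in general, an isotopy composed with a single TL exchange move. This is exactly why the paper's proof begins with a subdivision of every elementary move into ``even more elementary'' ones, for which each of the annuli $(\theta_1;\theta_2)\times\mathbb S^1$ and $\mathbb S^1\times(\varphi_1;\varphi_2)$ contains at most one edge of the diagram, and only then performs a finite case-by-case check; your proposal contains no substitute for this subdivision, and without it the case analysis does not close up. A minor but telling symptom of the same confusion: you call $\beta$ ``the pushed arc'', whereas by definition $\beta\subset\Gamma\cap\Gamma'$ is fixed by the move; what is pushed is $\alpha$ (becoming $\alpha'$), and these two arcs co-bound the swept disc $d$, while $\beta$ is the strand that travels around the torus.
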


\begin{proof}
Due to Proposition~\ref{i+ii-prop}, proving the `if' part amounts to checking that exchange moves
of transverse-Legendrian links can be realized my means of elementary moves of respective rectangular diagrams.
We leave this to the reader, and don't use in the sequel.

To prove the `only if' part, first, note that every elementary move of rectangular diagrams can be
decomposed into a sequence of `even more elementary' ones, namely, such that each of the annuli~$(\theta_1;\theta_2)\times\mathbb S^1$
and~$\mathbb S^1\times(\varphi_1;\varphi_2)$ (we use the notation from Definition~\ref{moves-def})
contains at most one edge or the diagram being transformed. This follows from the fact that a single elementary move
associated with the rectangle~$[\theta_1;\theta_2]\times[\varphi_1;\varphi_2]$ can be decomposed
into two moves associated with rectangles~$[\theta_1;\theta_3]\times[\varphi_1;\varphi_2]$
and~$[\theta_3;\theta_2]\times[\varphi_1;\varphi_2]$ (respectively, $[\theta_1;\theta_2]\times[\varphi_1;\varphi_3]$ and
$[\theta_1;\theta_2]\times[\varphi_3;\varphi_2]$) for any~$\theta_3\in(\theta_1;\theta_2)$
(respectively, $\varphi_3\in(\varphi_1;\varphi_2)$)
such that the meridian~$\{\theta_2\}\times\mathbb S^1$ (respectively, the longitude~$\mathbb S^1\times\{\varphi_3\}$)
contains no vertices of the diagram.

In each case of an `even more elementary' move (there are now only finitely many to consider),
it is a direct check that the corresponding transverse-Legendrian
link undergoes an isotopy in the class of transverse-Legendrian links, possibly composed with an exchange move.
\end{proof}

\section{Applications}
Corollary~\ref{main-coro} gives a theoretical solution of the equivalence problem for
transverse links having trivial orientation-preserving symmetry group, but
an implementation of the algorithm takes a lot of time in general.
However, the results of~\cite{distinguishing,dyn-shast} supplemented by Propositions~\ref{R3-prop} and~\ref{class-prop} above
allow, in some cases, to distinguish transverse knots having trivial orientation-preserving symmetry group
with very little effort. To illustrate this, we consider the knots~$10_{128}$ and~$10_{160}$.

It is conjectured in~\cite{chong2013} chat the $\xi_+$-Legendrian knots
associated with the rectangular diagrams~$10_{128}^{1\mathrm R}$ and~$-\mu(10_{128}^{1\mathrm R})$
shown in Figure~\ref{128-r-fig} (we use the notation of~\cite{dyn-shast} for these diagrams)
\begin{figure}[ht]
\begin{tabular}{ccc}
\includegraphics[scale=.25]{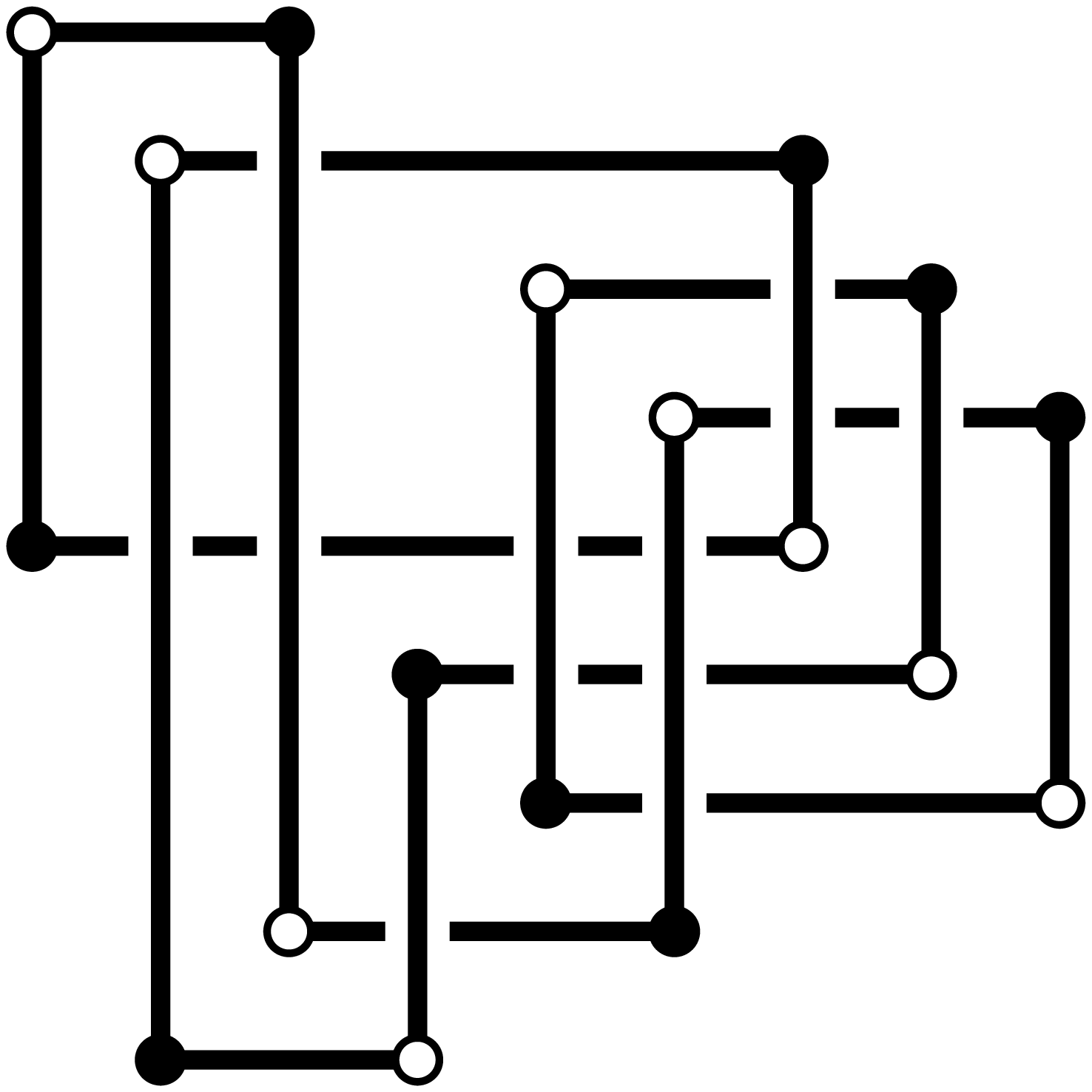}
&\hbox to 2cm{\hss}&
\includegraphics[scale=.25]{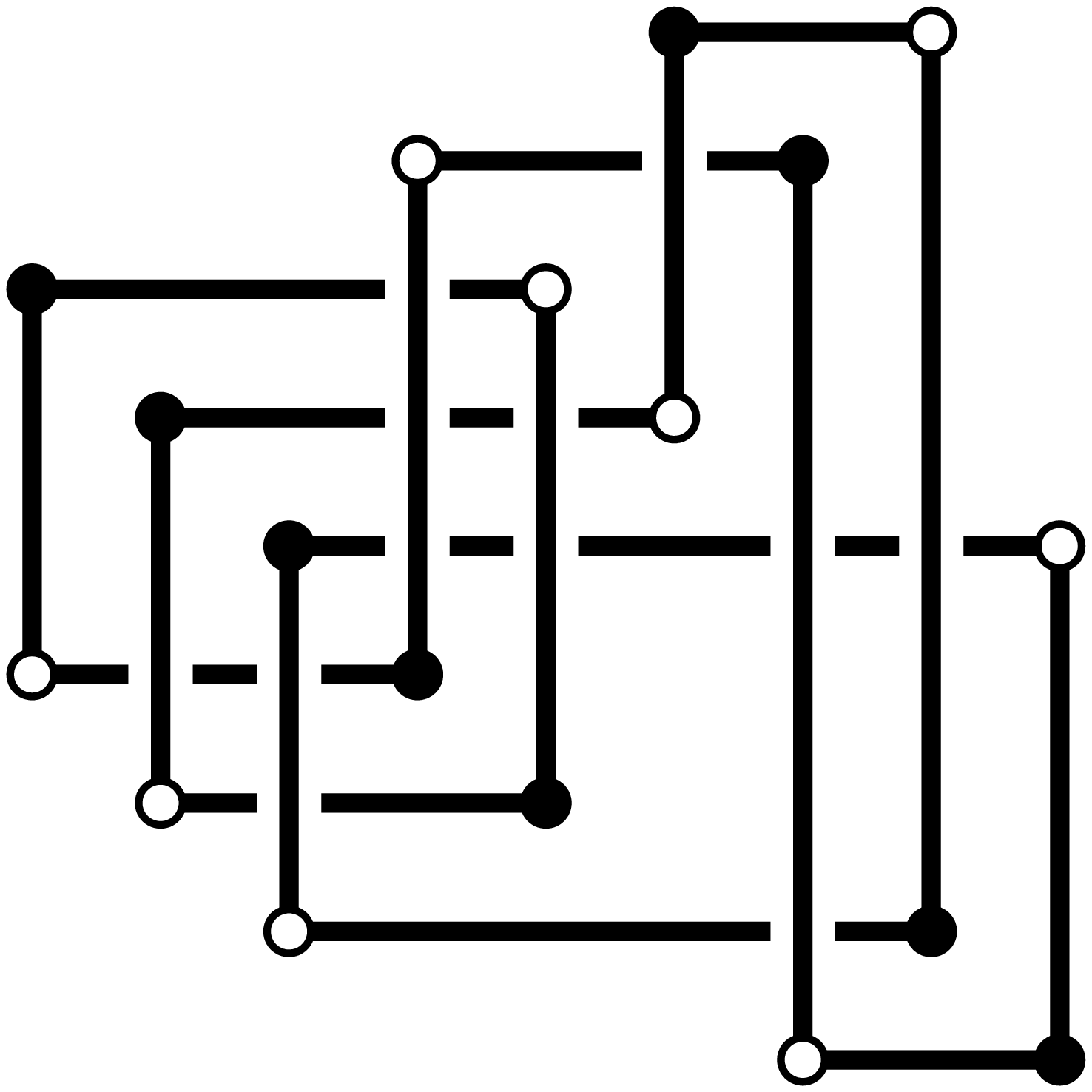}\\
$10_{128}^{1\mathrm R}$&&$-\mu(10_{128}^{1\mathrm R})$
\end{tabular}
\caption{The diagrams~$10_{128}^{1\mathrm R}$ and~$-\mu(10_{128}^{1\mathrm R})$}\label{128-r-fig}
\end{figure}
are not Legendrian isotopic, and, moreover, remain such after
any number of negative stabilizations. This is equivalent to saying that the positively $\xi_+$-transverse
knots associated with these diagrams are not transversely isotopic.
In the notation introduced in the beginning of this paper,
this inequality can also be written as
\begin{equation}\label{128-transverse-neq}
[10_{128}^{1\mathrm R}]_{\overrightarrow{\mathrm I},\overleftarrow{\mathrm I},\overrightarrow{\mathrm{II}}}
\ne[-\mu(10_{128}^{1\mathrm R})]_{\overrightarrow{\mathrm I},\overleftarrow{\mathrm I},\overrightarrow{\mathrm{II}}}.
\end{equation}

This conjecture was partially confirmed in~\cite[Proposition~7.5]{dyn-shast},
namely, it was shown that the Legendrian knots in questions are, indeed,
not equivalent, and remain such after up to four negative stabilizations. Extending
this to any number of negative stabilizations now amounts to showing that
\begin{equation}\label{128-ne-eq}
[10_{128}^{1\mathrm R}]_{\overrightarrow{\mathrm{II}}}\ne
[-\mu(10_{128}^{1\mathrm R})]_{\overrightarrow{\mathrm{II}}}.
\end{equation}

The type~$\overrightarrow{\mathrm{II}}$ transverse-Legendrian knots associated with
the diagrams~$10_{128}^{1\mathrm R}$ and~$-\mu(10_{128}^{1\mathrm R})$ are shown in Figure~\ref{torus-128-fig}.
\begin{figure}[ht]
\begin{tabular}{ccc}
\includegraphics[scale=.25]{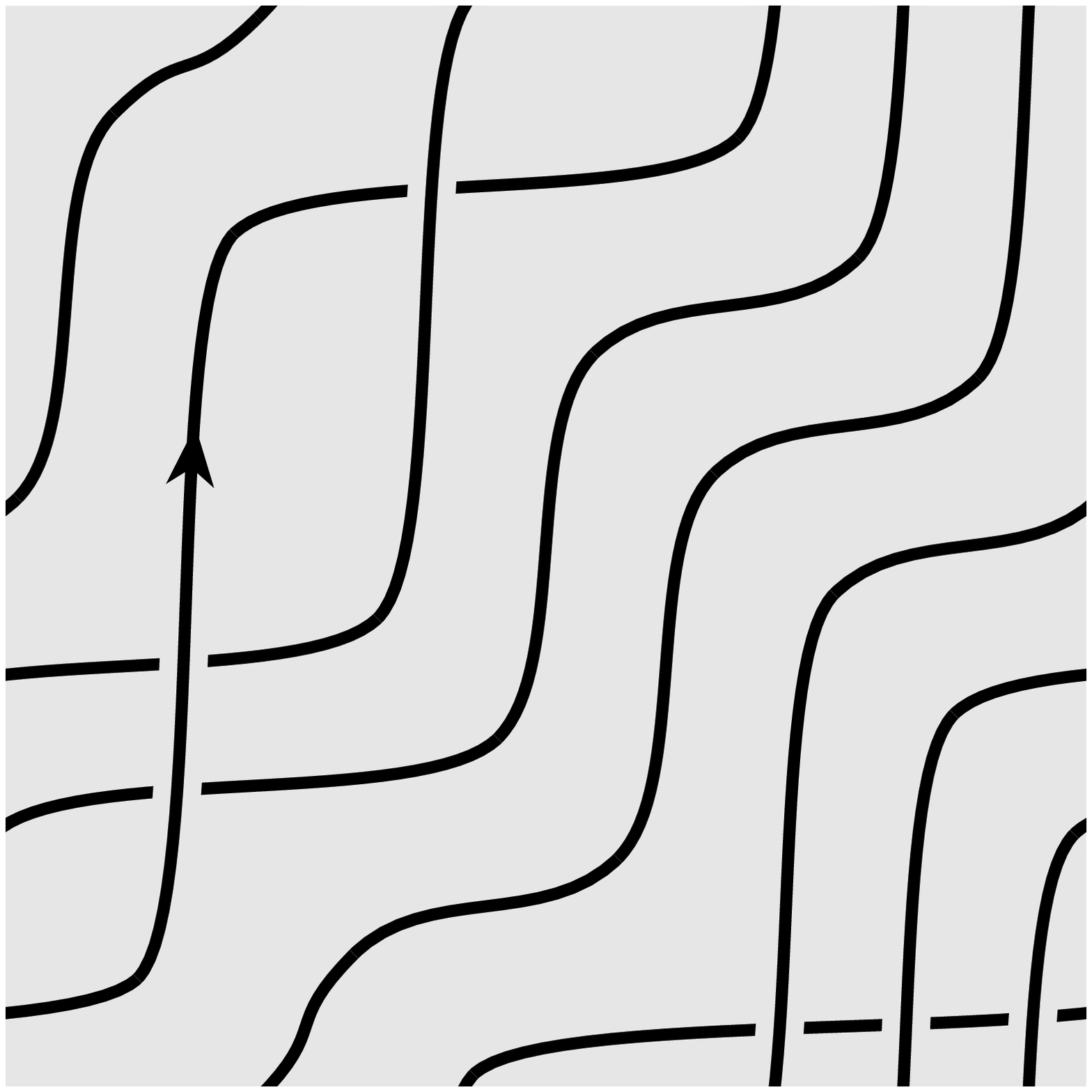}
&\hbox to 2cm{\hss}&
\includegraphics[scale=.25]{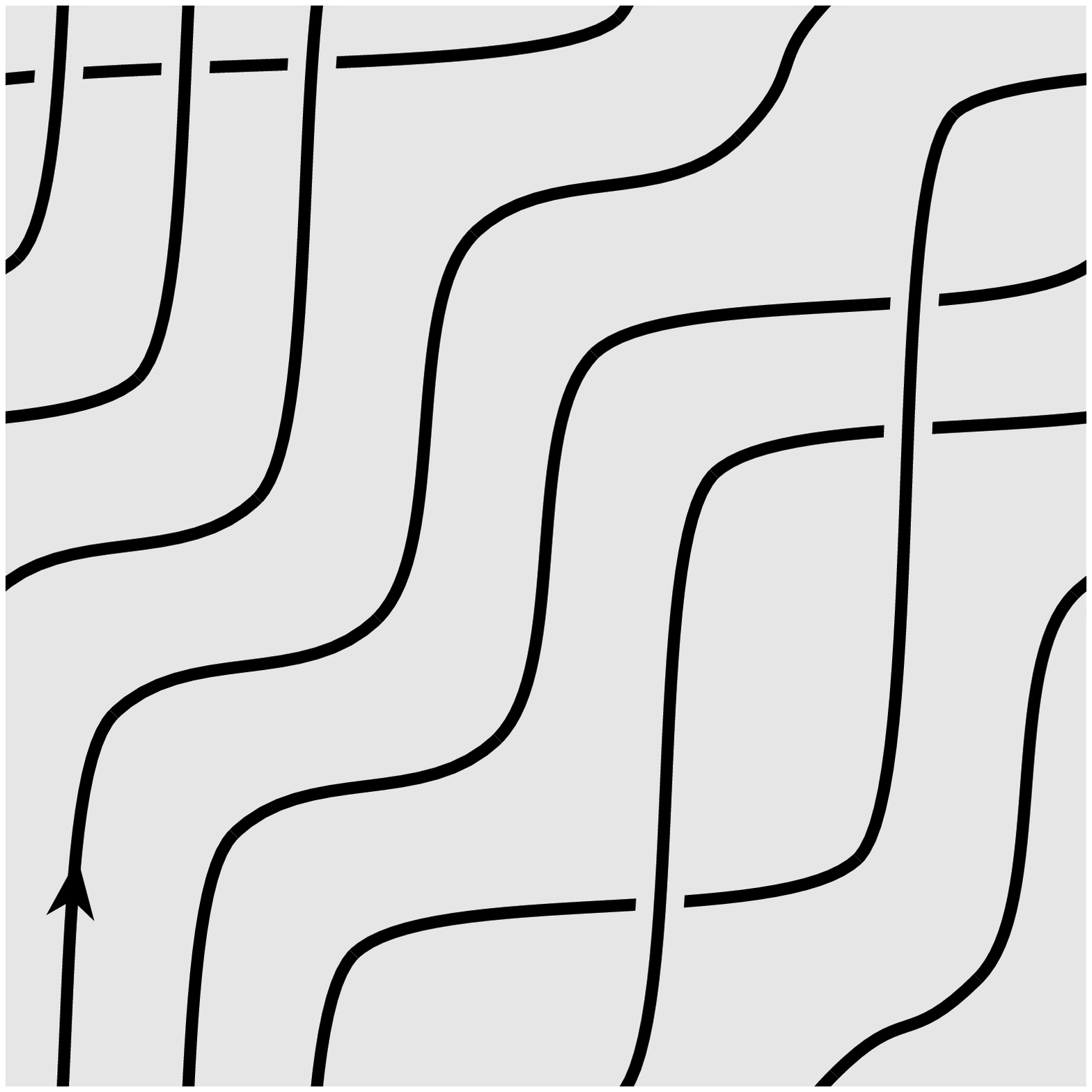}\\
$\mathrm{TL}_{\overrightarrow{\mathrm{II}}}(10_{128}^{1\mathrm R})$&&$\mathrm{TL}_{\overrightarrow{\mathrm{II}}}(-\mu(10_{128}^{1\mathrm R}))$
\end{tabular}
\caption{Torus projections of knots from $\mathrm{TL}_{\protect\overrightarrow{\mathrm{II}}}(10_{128}^{1\mathrm R})$
and~$\mathrm{TL}_{\protect\overrightarrow{\mathrm{II}}}(-\mu(10_{128}^{1\mathrm R}))$}\label{torus-128-fig}
\end{figure}
There are no `triangles' in the complement of any of these torus fronts, hence no Reidemeister-III move
can be applied to them. It is also not hard to see that these torus fronts admit no exchange moves,
even after any isotopy in the class of positive torus fronts, and that they are not isotopic.
By Propositions~\ref{R3-prop} and~\ref{class-prop}, this implies~\eqref{128-ne-eq}, and then~\eqref{128-transverse-neq} by~\cite[Theorem~4.2 and Figure~20]{dyn-shast}.

Thus, we have the following.

\begin{prop}
The positively $\xi_+$-transverse
knots associated with the diagrams~$10_{128}^{1\mathrm R}$ and~$-\mu(10_{128}^{1\mathrm R})$
are not transversely isotopic.
\end{prop}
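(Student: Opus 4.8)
The plan is to run the reduction that the earlier sections have prepared and then to carry out an explicit front-level analysis of the two diagrams. By the dictionary recalled in the introduction and in the proof of Corollary~\ref{main-coro}, transverse isotopy of the two positively $\xi_+$-transverse knots is exactly the equality of their classes in $\mathscr R/\bigl\langle\overrightarrow{\mathrm I},\overleftarrow{\mathrm I},\overrightarrow{\mathrm{II}}\bigr\rangle$, so the assertion to be proved is~\eqref{128-transverse-neq}. Since the knot $10_{128}$ has trivial orientation-preserving symmetry group, the biconditional of \cite[Theorem~4.2]{dyn-shast}, used in the proof of Corollary~\ref{main-coro}, gives $[R_1]_{\overrightarrow{\mathrm{II}}}=[R_2]_{\overrightarrow{\mathrm{II}}}\Leftrightarrow[R_1]_{\overrightarrow{\mathrm I},\overleftarrow{\mathrm I},\overrightarrow{\mathrm{II}}}=[R_2]_{\overrightarrow{\mathrm I},\overleftarrow{\mathrm I},\overrightarrow{\mathrm{II}}}$. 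Hence~\eqref{128-transverse-neq} is equivalent to the coarser $\overrightarrow{\mathrm{II}}$-inequality~\eqref{128-ne-eq}, and it suffices to establish $[10_{128}^{1\mathrm R}]_{\overrightarrow{\mathrm{II}}}\ne[-\mu(10_{128}^{1\mathrm R})]_{\overrightarrow{\mathrm{II}}}$. This is the step where the machinery of the present paper enters.

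Next I would translate~\eqref{128-ne-eq} into a statement about torus fronts. By Proposition~\ref{class-prop}, $[R]_{\overrightarrow{\mathrm{II}}}=[R']_{\overrightarrow{\mathrm{II}}}$ holds if and only if the associated type-$\overrightarrow{\mathrm{II}}$ transverse-Legendrian knots $\mathrm{TL}_{\overrightarrow{\mathrm{II}}}(R)$ and $\mathrm{TL}_{\overrightarrow{\mathrm{II}}}(R')$ are carried into one another by isotopies in the class of transverse-Legendrian links and by exchange moves; and by Proposition~\ref{R3-prop} the front-level isotopy is generated by continuous deformations through almost generic torus fronts together with type-III Reidemeister moves. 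Thus, at the level of fronts, the relation $[\cdot]_{\overrightarrow{\mathrm{II}}}$ is generated by exactly three ingredients: isotopy of generic positive torus fronts, type-III moves, and front exchange moves. I would then read off the two fronts $\mathrm{TL}_{\overrightarrow{\mathrm{II}}}(10_{128}^{1\mathrm R})$ and $\mathrm{TL}_{\overrightarrow{\mathrm{II}}}(-\mu(10_{128}^{1\mathrm R}))$ from Figure~\ref{torus-128-fig} and aim to show that no sequence of these three operations takes one to the other.

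The combinatorial core is to exclude each operation, and I expect the exchange-move exclusion to be the main obstacle. Type-III moves are ruled out immediately by Proposition~\ref{R3-prop}: such a move requires a triangular region bounded by three front arcs with the appropriate over/under pattern in the complement of the front, and a direct inspection shows that neither front contains such a triangle, so no type-III move is ever available. An exchange move, by contrast, requires the configuration of Figure~\ref{3-arcs-fig}: a disc $d$ cobounded by two front arcs, together with an arc $\beta$ homologous to a meridian or a longitude, satisfying conditions~(1)--(8) of its definition. The difficulty is that this must be excluded not only for the two fronts as drawn but after every isotopy in the class of positive torus fronts. My plan here is to extract an isotopy invariant of the front — for instance, the arc-and-crossing pattern encoded homologically as in the proof of Theorem~\ref{main-tech-theo}, or the cyclic order of crossings along meridians and longitudes — that is preserved by isotopy and by the admissible moves, and to verify that it differs for the two fronts; equivalently, one argues that every candidate disc $d$ has its boundary crossings arranged so that condition~(8) fails, so that no exchange move is ever applicable regardless of how the front is first deformed. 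The same analysis simultaneously shows that the two fronts are not isotopic.

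Granting this front analysis, the two fronts are neither isotopic nor related by type-III or exchange moves, so Propositions~\ref{R3-prop} and~\ref{class-prop} yield~\eqref{128-ne-eq}. Passing this back through the equivalence of \cite[Theorem~4.2]{dyn-shast} gives~\eqref{128-transverse-neq}, which is precisely the statement that the two positively $\xi_+$-transverse knots are not transversely isotopic. The load-bearing step is the exchange-move exclusion of the third paragraph; everything else is bookkeeping within the equivalences already set up above.
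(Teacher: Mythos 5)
Your proposal follows the same two-step route as the paper: reduce \eqref{128-transverse-neq} to \eqref{128-ne-eq} via \cite[Theorem~4.2]{dyn-shast}, then prove \eqref{128-ne-eq} by inspecting the torus fronts of Figure~\ref{torus-128-fig} (no triangles, hence no type-III moves; no exchange moves even after isotopy; the fronts are not isotopic) and invoking Propositions~\ref{R3-prop} and~\ref{class-prop}. However, your reduction step is stated incorrectly, and this is a genuine gap. The biconditional
$[R_1]_{\overrightarrow{\mathrm{II}}}=[R_2]_{\overrightarrow{\mathrm{II}}}\Leftrightarrow
[R_1]_{\overrightarrow{\mathrm I},\overleftarrow{\mathrm I},\overrightarrow{\mathrm{II}}}=
[R_2]_{\overrightarrow{\mathrm I},\overleftarrow{\mathrm I},\overrightarrow{\mathrm{II}}}$
cannot follow from triviality of the orientation-preserving symmetry group alone. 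Indeed, let $R_2$ be obtained from $R_1$ by a single stabilization of type~$\overleftarrow{\mathrm I}$ (the underlying knot may be anything, including $10_{128}$). Then the right-hand side holds trivially, since $\overleftarrow{\mathrm I}$ is among the generating moves, but the left-hand side fails: $[\,\cdot\,]_{\overrightarrow{\mathrm{II}}}$ refines $[\,\cdot\,]_{\overrightarrow{\mathrm{II}},\overleftarrow{\mathrm{II}}}$, i.e.\ it determines the $\xi_-$-Legendrian class, and a type~I stabilization acts on the associated $\xi_-$-Legendrian knot as a Legendrian stabilization, changing its Thurston--Bennequin invariant. This is precisely why the proof of Corollary~\ref{main-coro} first arranges $[R_1]_{\overrightarrow{\mathrm{II}},\overleftarrow{\mathrm{II}}}=[R_2]_{\overrightarrow{\mathrm{II}},\overleftarrow{\mathrm{II}}}$ \emph{before} invoking \cite[Theorem~4.2]{dyn-shast}, and why the paper's own proof of this proposition cites Theorem~4.2 \emph{and Figure~20} of \cite{dyn-shast}: that figure supplies exactly the missing hypothesis for the pair at hand, namely that $10_{128}^{1\mathrm R}$ and $-\mu(10_{128}^{1\mathrm R})$ represent the same $\xi_-$-Legendrian knot. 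Without this input, \eqref{128-ne-eq} does not imply \eqref{128-transverse-neq}, and your argument does not reach the stated proposition.

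A lesser point: the combinatorial core of your third paragraph (excluding exchange moves after arbitrary isotopy, and non-isotopy of the two fronts) is left as a choice between two sketched strategies rather than carried out. The paper disposes of this step by direct inspection of Figure~\ref{torus-128-fig}, which is legitimate because an isotopy through almost generic positive torus fronts preserves the combinatorial structure of the front (double points persist, no tangencies or triple points occur), so the non-existence of triangles and of discs satisfying conditions~(1)--(8) of the exchange-move definition can be verified on the drawn fronts once and for all. With that verification, each $\overrightarrow{\mathrm{II}}$-class collapses to an isotopy class of fronts and non-isotopy finishes \eqref{128-ne-eq}, as you indicate; so this part of your plan is sound, but it is the part that must actually be executed on the concrete fronts.
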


In a completely similar fashion the following statement, which also confirms a conjecture of~\cite{chong2013}, is established.

\begin{prop}
The positively $\xi_+$-transverse
knots associated with the diagrams~$-10_{160}^{2\mathrm R}$ and~$10_{160}^{3\mathrm R}$
shown in Figure~\ref{160-r-fig}
are not transversely isotopic.
\end{prop}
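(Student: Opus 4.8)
The plan is to reproduce, verbatim in structure, the argument already used for the $10_{128}$ knot. First I would observe that the assertion is nothing but the inequality
\begin{equation}\label{160-transverse-neq}
[-10_{160}^{2\mathrm R}]_{\overrightarrow{\mathrm I},\overleftarrow{\mathrm I},\overrightarrow{\mathrm{II}}}
\ne[10_{160}^{3\mathrm R}]_{\overrightarrow{\mathrm I},\overleftarrow{\mathrm I},\overrightarrow{\mathrm{II}}},
\end{equation}
because, as recalled in the introduction, the classes in~$\mathscr R/\langle\overrightarrow{\mathrm I},\overleftarrow{\mathrm I},\overrightarrow{\mathrm{II}}\rangle$ are exactly the $\xi_+$-transverse link types. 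Since~$10_{160}$ has trivial orientation-preserving symmetry group, \cite[Theorem~4.2 and Figure~20]{dyn-shast} applies and shows that~\eqref{160-transverse-neq} is implied by the a priori stronger separation
\begin{equation}\label{160-ne-eq}
[-10_{160}^{2\mathrm R}]_{\overrightarrow{\mathrm{II}}}\ne[10_{160}^{3\mathrm R}]_{\overrightarrow{\mathrm{II}}}.
\end{equation}
Thus it suffices to distinguish the two diagrams modulo type~$\overrightarrow{\mathrm{II}}$ stabilizations and exchange moves alone.

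Next I would pass to the transverse-Legendrian picture. Forming $\Gamma_{\overrightarrow{\mathrm{II}}}(-10_{160}^{2\mathrm R})$ and~$\Gamma_{\overrightarrow{\mathrm{II}}}(10_{160}^{3\mathrm R})$ and perturbing each into a generic positive torus front produces representatives of~$\mathrm{TL}_{\overrightarrow{\mathrm{II}}}(-10_{160}^{2\mathrm R})$ and~$\mathrm{TL}_{\overrightarrow{\mathrm{II}}}(10_{160}^{3\mathrm R})$, which I would draw explicitly in analogy with Figure~\ref{torus-128-fig}. By Propositions~\ref{R3-prop} and~\ref{class-prop}, the classes in~\eqref{160-ne-eq} coincide if and only if these two fronts are related by a finite sequence of isotopies of positive torus fronts, type~III Reidemeister moves, and exchange moves of transverse-Legendrian links. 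Consequently~\eqref{160-ne-eq} reduces to three rigidity facts about the drawn fronts: that the complement of each front contains no triangular region (so no type~III move applies), that neither front admits an exchange move even after an arbitrary isotopy in the class of positive torus fronts, and that the two fronts are not isotopic to each other.

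I expect the first fact to be immediate from the figure, by inspecting the regions of each front and noting that none is a triangle bounded by three arcs with the slope/overpass pattern required for a type~III move. For non-isotopy I would note that~$\omega_{\overrightarrow{\mathrm{II}}}$ is already an invariant by Lemma~\ref{omega-inv-lem}, so if the two triples differ we are done at once; if they agree, I would certify non-isotopy combinatorially by comparing the cyclic arrangement of the self-intersections along each component together with the over/under data dictated by the slopes, which is an isotopy invariant of generic positive torus fronts. The hard part, exactly as in the~$10_{128}$ case, will be the middle assertion: excluding exchange moves after \emph{any} isotopy. This is a genuine rigidity statement, since an exchange move is defined through the existence of a disc~$d$ and arcs~$\alpha,\beta$ meeting the eight conditions of its definition, and one must rule out such a configuration on every isotopic representative. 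I would handle this by showing each front is already of minimal complexity --- its number of double points cannot be lowered, and every embedded disc cobounded by arcs of the front violates at least one of conditions~(4)--(8) --- thereby reducing the verification to the finitely many candidate discs visible once the front is put in a normal position. Both fronts being small and explicit, this is a finite (if tedious) inspection entirely parallel to the~$10_{128}$ computation, and it yields~\eqref{160-ne-eq}, hence~\eqref{160-transverse-neq}, hence the proposition.
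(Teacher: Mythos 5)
Your proposal follows exactly the paper's route: the paper proves this proposition ``in a completely similar fashion'' to the $10_{128}$ case, i.e.\ by reducing the transverse inequality to $[-10_{160}^{2\mathrm R}]_{\overrightarrow{\mathrm{II}}}\ne[10_{160}^{3\mathrm R}]_{\overrightarrow{\mathrm{II}}}$ via \cite[Theorem~4.2]{dyn-shast} and then analyzing the torus projections of Figure~\ref{torus-160-fig} with Propositions~\ref{R3-prop} and~\ref{class-prop} (no triangles, no exchange moves after any isotopy, fronts not isotopic), which is precisely your plan. One small remark: the $\overrightarrow{\mathrm{II}}$-separation is a priori \emph{weaker}, not stronger, than the transverse one (the trivial implication runs from the coarser inequality to the finer); your use of \cite[Theorem~4.2]{dyn-shast} to supply the nontrivial converse is nevertheless correct and is exactly what the paper does.
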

\begin{figure}[ht]
\begin{tabular}{ccc}
\includegraphics[scale=.25]{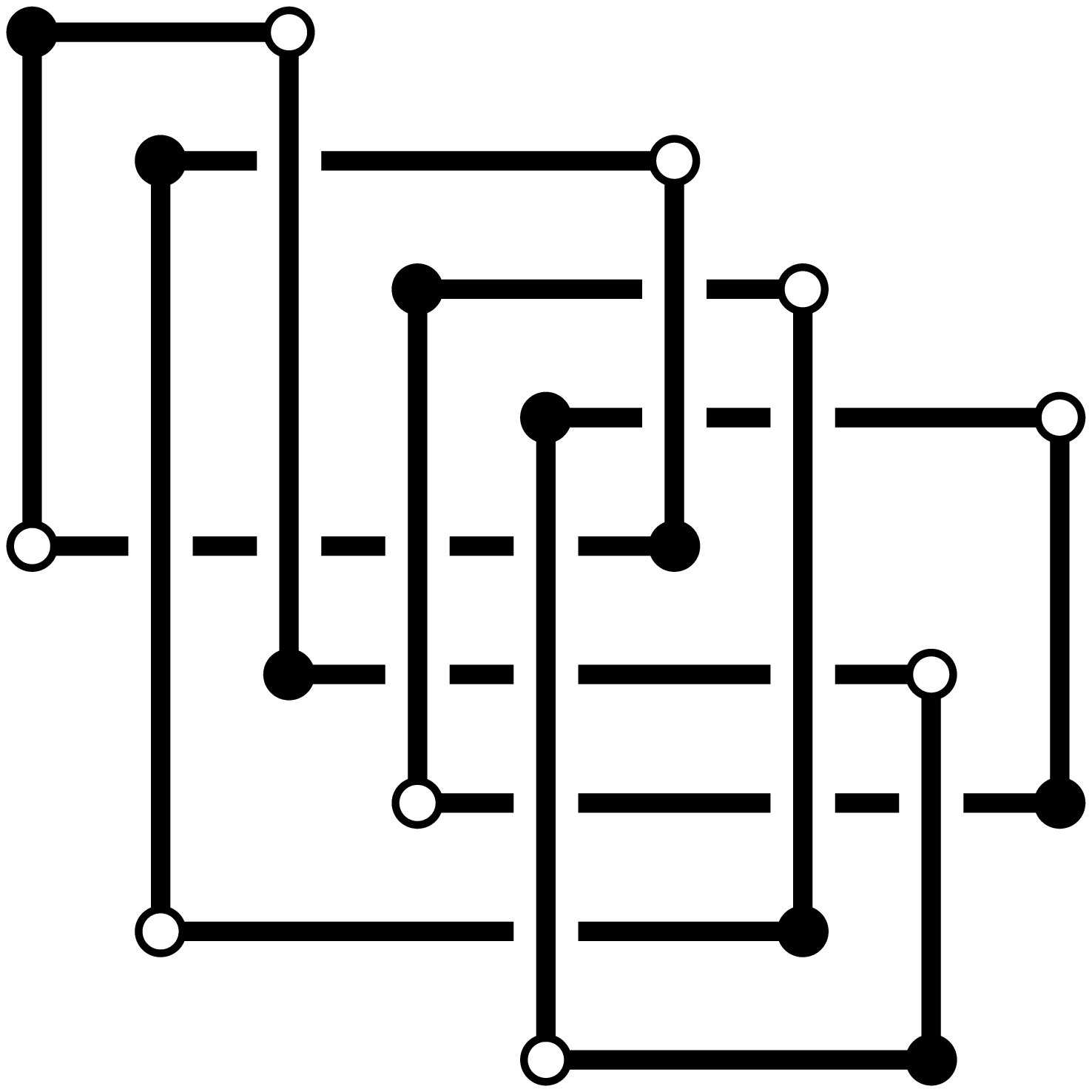}
&\hbox to 2cm{\hss}&
\includegraphics[scale=.25]{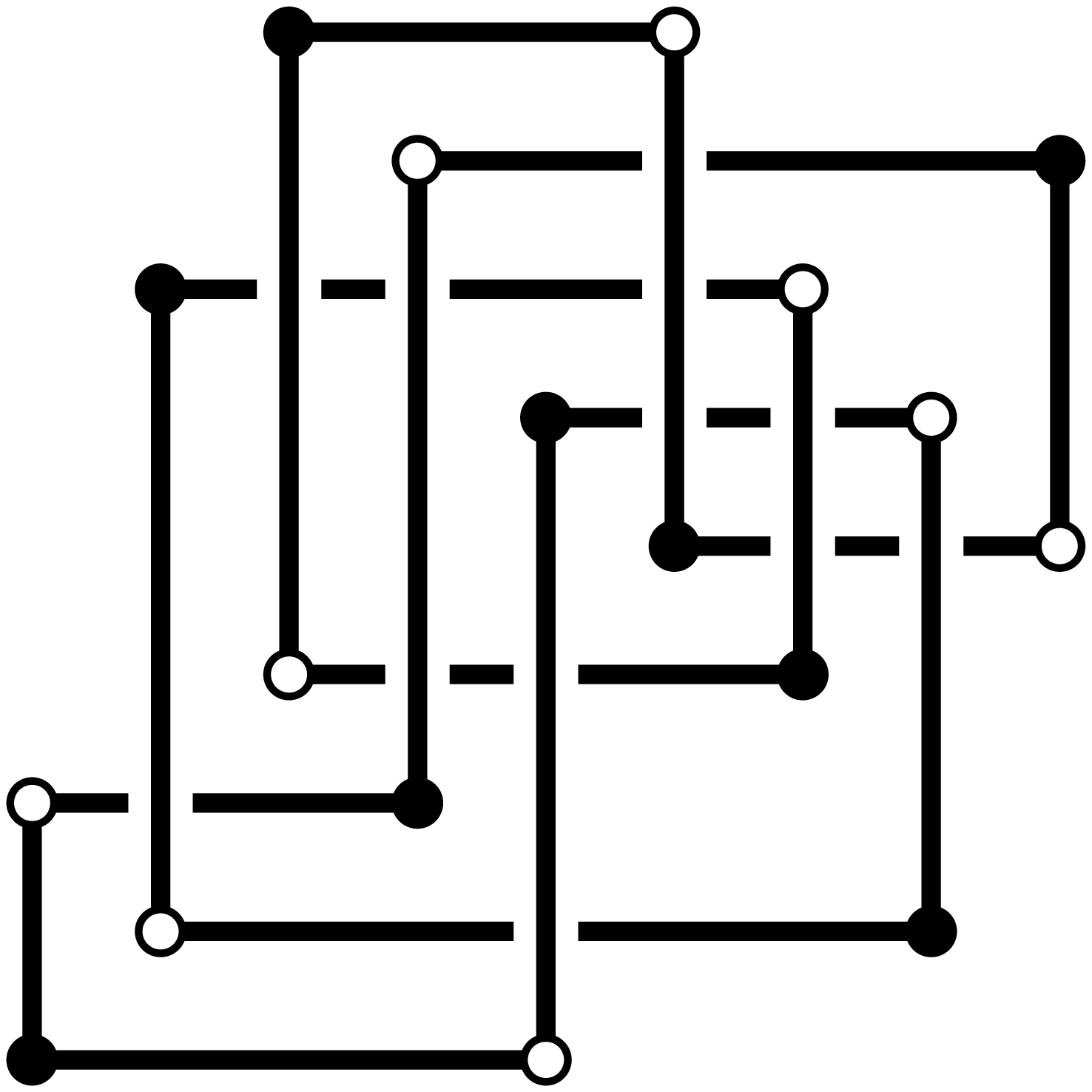}\\
$-10_{160}^{2\mathrm R}$&&$10_{160}^{3\mathrm R}$
\end{tabular}
\caption{The diagrams~$-10_{160}^{2\mathrm R}$ and~$10_{160}^{3\mathrm R}$}\label{160-r-fig}
\end{figure}

The proof is obtained by analyzing the torus projections in Figure~\ref{torus-160-fig} (see \cite[Figure~22]{dyn-shast}
for the notation and a description of the relation between these diagrams and those in~\cite{chong2013}).

\begin{figure}
\begin{tabular}{ccc}
\includegraphics[scale=.25]{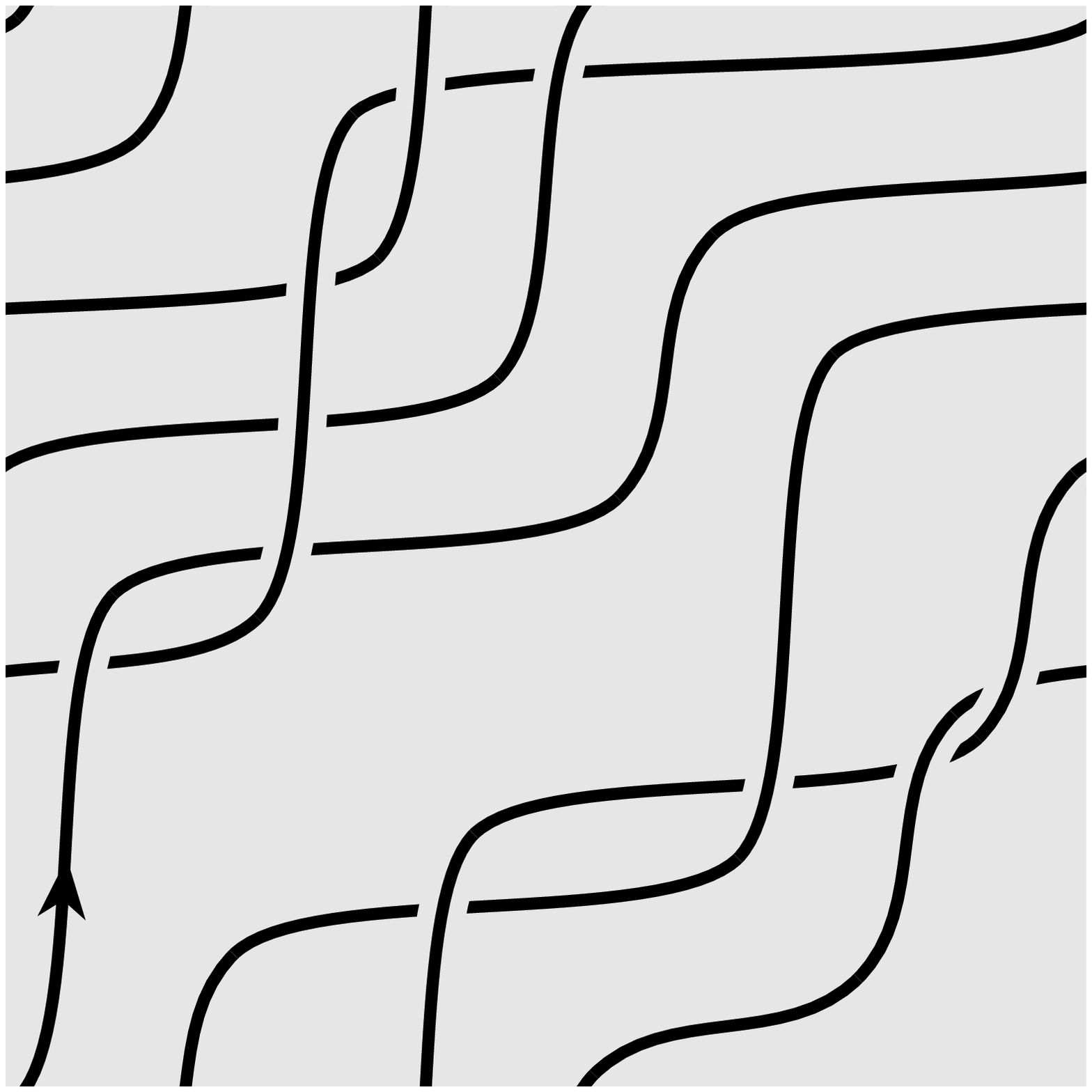}
&\hbox to 2cm{\hss}&
\includegraphics[scale=.25]{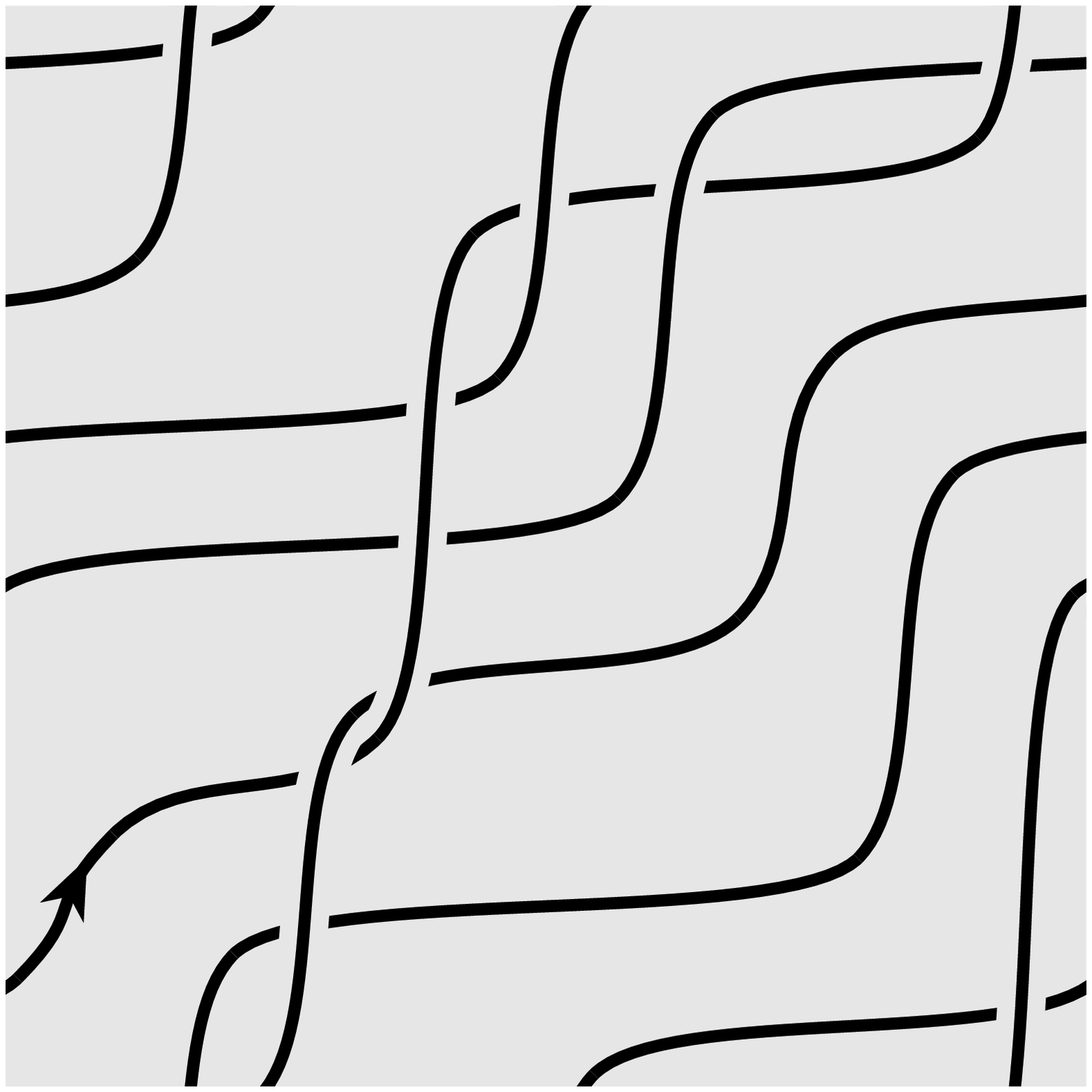}\\
$\mathrm{TL}_{\protect\overrightarrow{\mathrm{II}}}(-10_{160}^{2\mathrm R})$&&$\mathrm{TL}_{\protect\overrightarrow{\mathrm{II}}}(10_{160}^{3\mathrm R})$
\end{tabular}
\caption{Torus projections of $\mathrm{TL}_{\protect\overrightarrow{\mathrm{II}}}(-10_{160}^{2\mathrm R})$
and~$\mathrm{TL}_{\protect\overrightarrow{\mathrm{II}}}(10_{160}^{3\mathrm R})$}\label{torus-160-fig}
\end{figure}

\section{Concluding remarks}
Four oriented types of stabilizations and destabilizations of rectangular diagrams of links
are symmetric to each other and play equal roles in knot theory. This means that with every rectangular diagram~$R$ of a link
one can associate four different objects having the nature of a transverse-Legendrian link type:
\begin{itemize}
\item
a positively $\xi_+$-transverse and $\xi_-$-Legendrian link type, which is identified with~$[R]_{\overrightarrow{\mathrm{II}}}$,
\item
a negatively $\xi_+$-transverse and $\xi_-$-Legendrian link type, which is identified with~$[R]_{\overleftarrow{\mathrm{II}}}$,
\item
a positively $\xi_-$-transverse and $\xi_+$-Legendrian link type, which is identified with~$[R]_{\overrightarrow{\mathrm I}}$, and
\item
a negatively $\xi_-$-transverse and $\xi_+$-Legendrian link type, which is identified with~$[R]_{\overleftarrow{\mathrm I}}$.
\end{itemize}
This is illustrated in Figure~\ref{four-tl-fig}, where torus projections of all four transverse-Legendrian links are shown.

\begin{figure}[ht]
\centerline{\includegraphics[scale=.2]{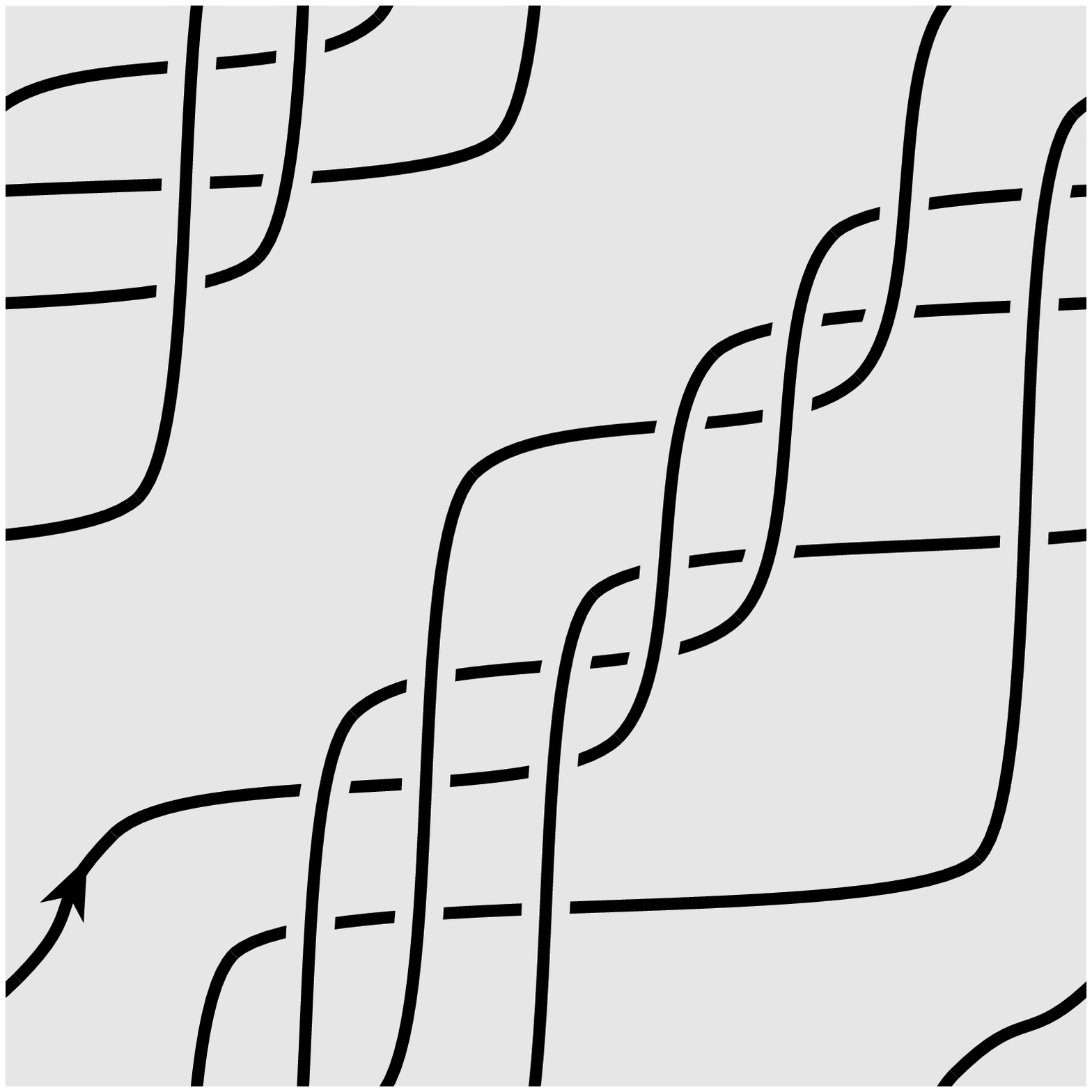}\smash{\put(-63,-15){$\mathrm{TL}_{\overrightarrow{\mathrm{II}}}(R)$}}\hskip6cm
\includegraphics[scale=.2]{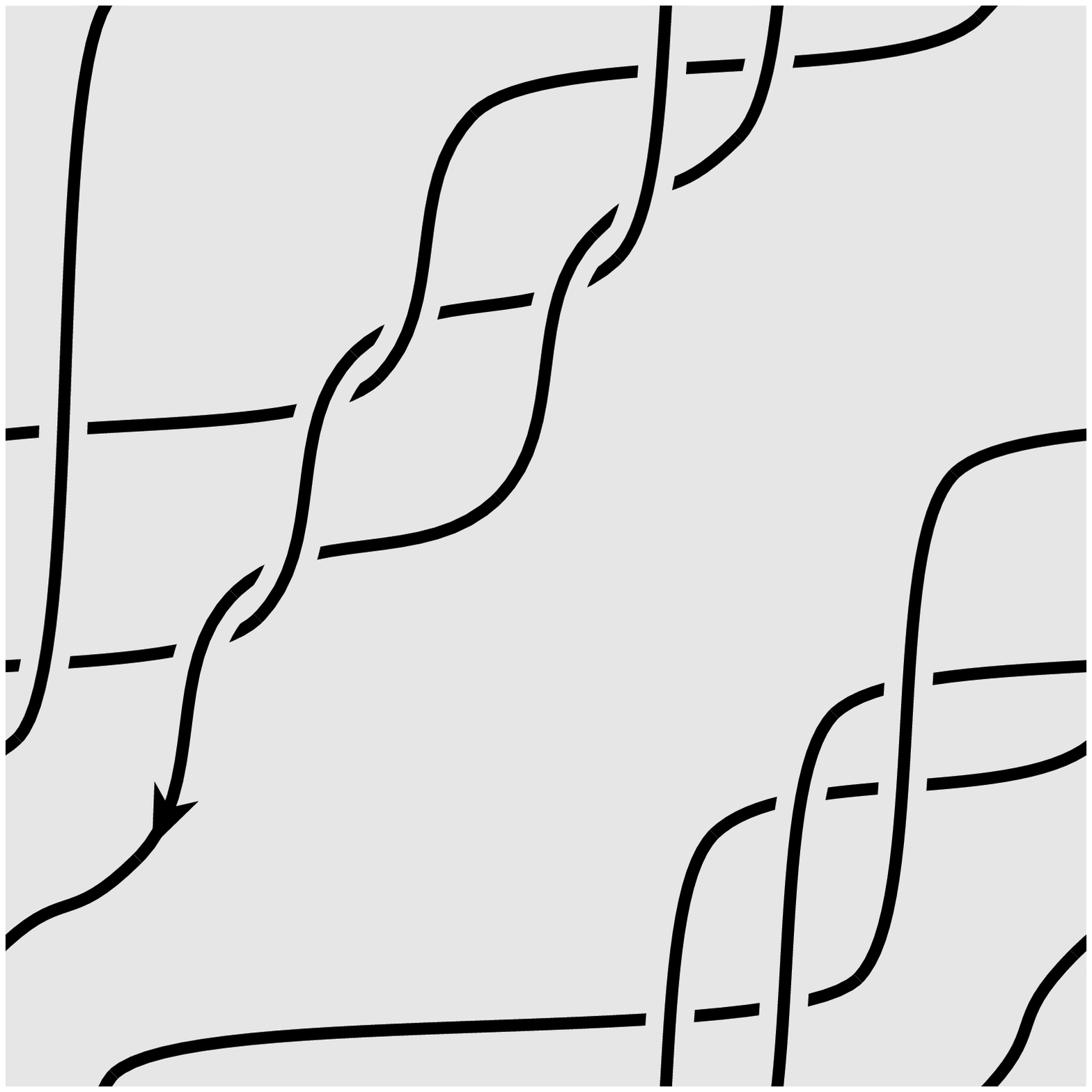}\smash{\put(-63,-15){$\mathrm{TL}_{\overleftarrow{\mathrm{II}}}(R)$}}}

\vskip-.5cm 

\centerline{\includegraphics[scale=.2]{rd1.eps}\smash{\put(-50,-15){$R$}}}

\vskip-.5cm 

\centerline{\includegraphics[scale=.2]{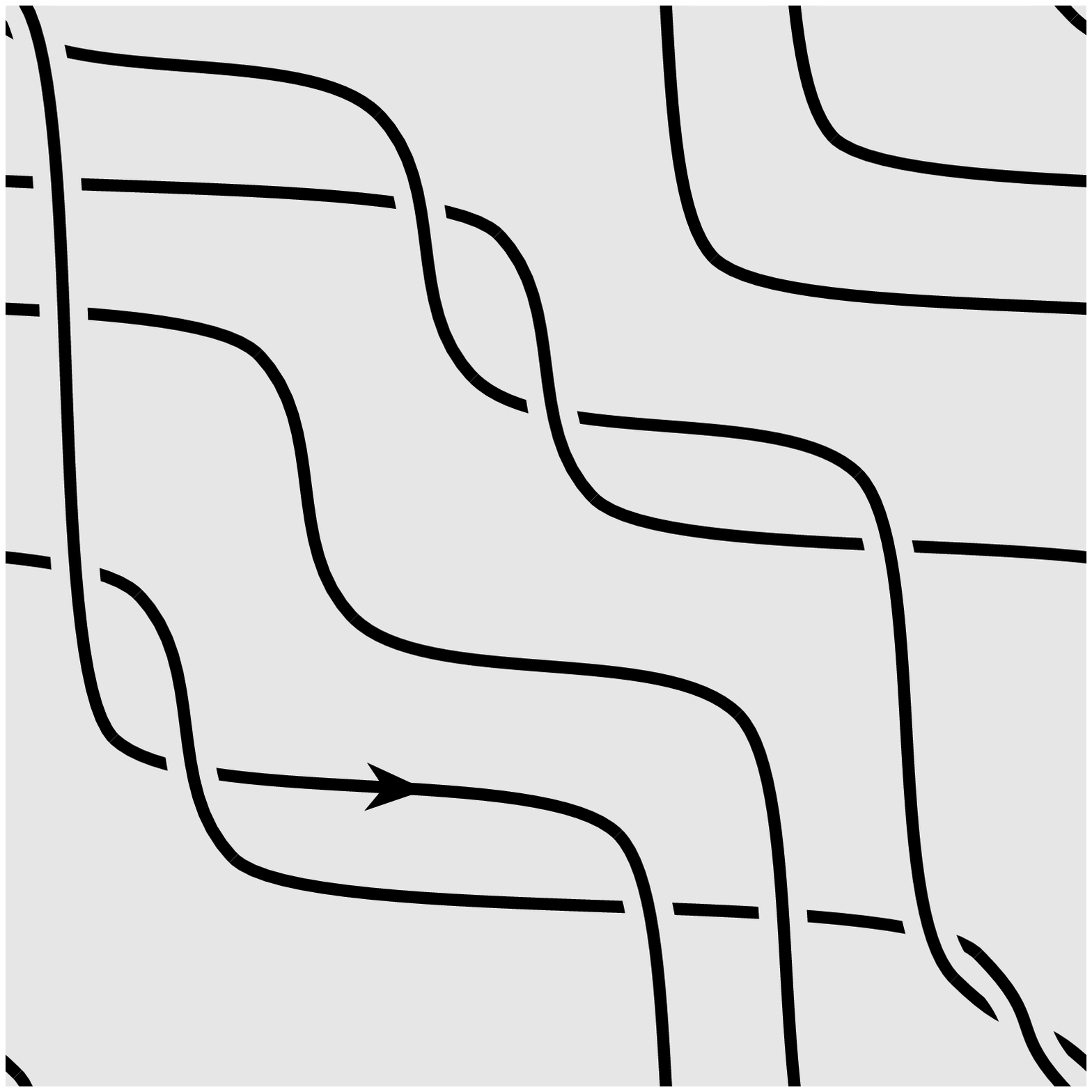}\smash{\put(-63,-15){$\mathrm{TL}_{\overrightarrow{\mathrm I}}(R)$}}\hskip6cm
\includegraphics[scale=.2]{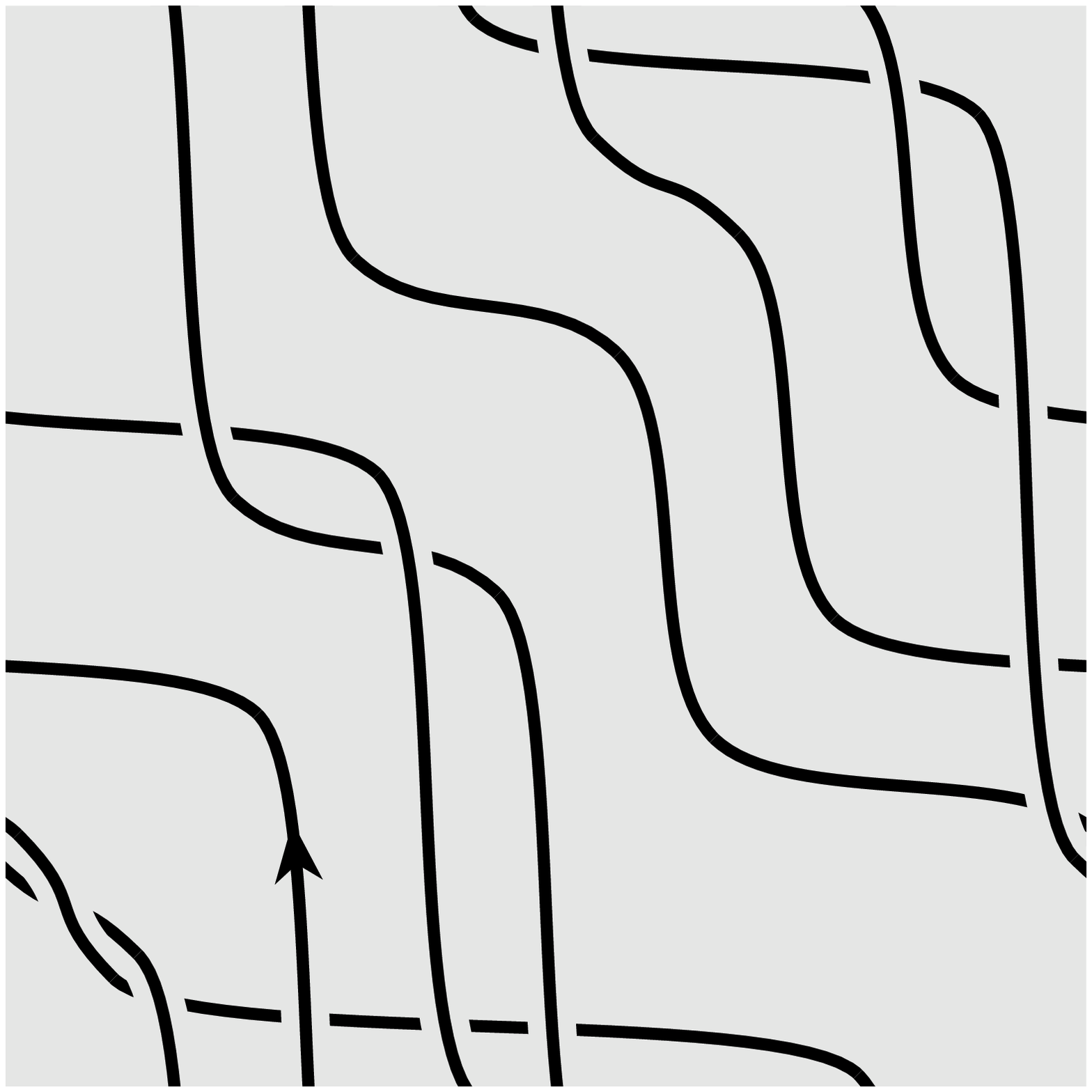}\smash{\put(-63,-15){$\mathrm{TL}_{\overleftarrow{\mathrm I}}(R)$}}}

\vskip5mm
\caption{Four transverse-Legendrian links associated with a single rectangular diagram}\label{four-tl-fig}
\end{figure}

One may naturally ask if there is a relation between rectangular diagrams and links which are Legendrian
with respect to both contact structures~$\xi_+$ and~$\xi_-$, or transverse with respect to both of them.
The answer in both cases is pretty simple. Links that are $\xi_+$-Legendrian and~$\xi_-$-Legendrian simultaneously
are exactly the links of the form~$\widehat R$, where~$R$ is a rectangular diagram of a link (one should
extend the definition of a Legendrian link to piecewise smooth curves, since the links of the form~$\widehat R$ are
typically non-smooth). So, equivalence classes of such links are in one-to-one correspondence with
combinatorial types of rectangular diagrams.

Links which are positively $\xi_+$-transverse and positively $\xi_-$-transverse are nothing else but closed
braids with~$\mathbb S^1_{\tau=0}$ as the axis. The isotopy classes of such links are the same
thing as conjugacy classes of braids. As noted in the beginning of the paper,
rectangular diagrams allow to classify braids modulo conjugacy and
Birman--Menasco exchange moves (as the elements of~$\mathscr R/\langle\overrightarrow{\mathrm I},\overrightarrow{\mathrm{II}}\rangle$). 
The situation with transverse-Legendrian links reflected in Proposition~\ref{class-prop}
is completely analogues.

\end{document}